\numberwithin{equation}{section}
\newcommand{\refpart}[1]{{\it (#1)}}  
\newcommand{\PP}{\mathbb{P}}
\newcommand{\QQ}{\mathbb{Q}}
\newcommand{\CC}{\mathbb{C}}
\newcommand{\FF}{\mathbb{F}}
\newcommand{\RR}{\mathbb{R}}
\newcommand{\ZZ}{\mathbb{Z}}
\newcommand{\KG}{\Lambda}
\newcommand{\XX}{{\cal X}}
\newcommand{\KC}{{\cal K}}
\newcommand{\ia}{\alpha}
\newcommand{\ib}{\beta}
\newcommand{\PSL}{\mbox{\rm PSL}}
\newtheorem{theorem}{Theorem}[section]
\newtheorem{lemma}[theorem]{Lemma} 
\newtheorem{propose}[theorem]{Proposition} 
\newtheorem{corollary}[theorem]{Corollary} 
\newtheorem{definition}[theorem]{Definition} 
\newtheorem{remark}[theorem]{Remark} 
\newcommand{\hpg}[5]{{}_{#1}\mbox{\rm F}_{\!#2}\!
  \left(\left.{#3 \atop #4}\right| #5 \right) }
\newcommand{\hpgo}[2]{{}_{#1}\mbox{\rm F}_{\!#2}}
\title{Darboux evaluations for hypergeometric functions with the projective monodromy PSL$(2,\FF_7)$}
\author{
        Raimundas Vidunas\footnote{
        Vilnius University, Lithuania.
        E-mail: {\sf rvidunas@gmail.com}.}
       }
\date{}
\begin{document}

\maketitle

\begin{abstract}
Algebraic hypergeometric functions can be compactly expressed as radical functions
on pull-back curves where the monodromy group is simpler, say, a finite cyclic group. 
These so-called Darboux evaluations were already considered for algebraic $\hpgo21$-functions.
This article presents Darboux evaluations for the classical 
case of $\hpgo32$-functions with the projective monodromy group PSL$(2,\FF_7)$. 
As an application, appealing 
modular evaluations of the same $\hpgo32$-functions are derived.
\end{abstract}

\section{Introduction}


One way to obtain workable expressions for algebraic hypergeometric functions
is to pull-back them to algebraic curves where the (finite) monodromy group would be simpler, 
say, a finite cyclic group \cite{ViDarb}.
For example, we have
\begin{align} 
\label{eq:tetra2} \hpg{2}{1}{1/4,\;7/12}{4/3}{\frac{x\,(x+4)^3}{4(2x-1)^3}}
 & =  \frac{1}{1+\frac14x}\,\left(1-2x\right)^{3/4},\\   \label{eq:tetra3}
\hpg{2}{1}{1/2,\;5/6}{2/3}{\frac{x\,(x+2)^3}{(2x+1)^3}\,} 
& = \frac{1}{(1-x)^2}\;\big(1+2x\big)^{3/2}
\end{align}
around $x=0$.
Here the $\hpgo21$-functions have the tetrahedral group $\cong A_4$ 
as the projective monodromy group (of the hypergeometric differential equation). 
The rational arguments of degree 4 reduce the monodromy to small cyclic groups,
as evidenced by the radical (i.e., algebraic power) functions on the right-hand sides of these identities.

If a Fuchsian differential equation $E$ on the Riemann sphere $\CC\PP^1$ 
has a finite monodromy group, then $E$ can be transformed by a pull-back transformation 
with respect to an algebraic covering $\varphi:B\to\CC\PP^1$ to a Fuchsian equation 
on the curve $B$ with a cyclic monodromy. 
This is a special case of a {\em Darboux covering} as defined in \cite{ViDarb}.
The transformed equation on the  {\em Darboux curve} $B$ has a basis of radical solutions
(and hence, a completely reducible monodromy representation).
Explicit expressions for solutions of $E$ in terms of radical 
functions on $B$ are called {\em Darboux evaluations}.
In \cite{ViDarb}, all tetrahedral, octahedral and icosahedral Schwarz types \cite{Schwarz}
of algebraic $\hpgo21$-functions are exemplified by Darboux evaluations.
The simplest Darboux curves for most icosahedral Schwarz types have genus 1
rather than 0.


Algebraic generalized hypergeometric functions $\hpgo{p}{p-1}$ are classified by Beukers and Heckman
\cite{BeuHeck}. One particularly interesting case \cite{Kato11}, \cite{vdPut00} 
is algebraic $\hpgo32$-functions such that the projective monodromy group 
(of their third order Fuchsian equations) is the simple group 
\begin{equation}
\KG=\PSL(2,\FF_7)\cong \mbox{\rm GL}(3,\FF_2)
\end{equation}
with 168 elements. This is the group of holomorphic symmetries of the Klein quartic curve
\begin{equation} \label{eq:klein}
X^3Y+Y^3Z+Z^3X=0.
\end{equation}
Let us denote this curve in $\CC\PP^2$ by $\KC$.
It has the genus $g=3$. It is the simplest {\em Hurwitz curve} \cite{Macbeath} 
as it achieves the Hurwitz \cite{Hurwitz93}
upper bound $84(g-1)$ for the number of holomorphic symmetries for complex projective curves 
of genus $g>1$.

Third order Fuchsian equations with the projective monodromy group $\KG$ 
were first constructed by Halphen \cite{Halphen84} and Hurwitz \cite{Hurwitz86}.
The Hurwitz equation is satisfied by
\begin{equation} \label{eq:hurwsol}
\frac1{\sqrt{1-z}}\;\hpg32{\frac{9}{14},\frac{11}{14},\frac{15}{14}}{\frac43,\;\frac53}{z}.
\end{equation}
In \cite[Table 8.3]{BeuHeck}, classes of 
$\hpgo32$-functions with the projective monodromy group $\KG$ are labelled by the numbers 2, 3, 4.
The differential Galois group of their Fuchsian equations 
is the complex reflection group ST24 in the Shephard--Todd classification \cite{STrg},
isomorphic to the central extension $\KG\times(\ZZ/2\ZZ)$.
That is also the customary monodromy group inside ${\rm GL}(3,\CC)$.
Theorem \ref{th:3f2cases} here gives a classification up to contiguous relations
of (Fuchsian equations for) $\hpgo32$-functions with the projective monodromy group $\KG$.

Pull-back coverings 
for transformations to Fuchsian equations with smaller 
projective monodromy groups correspond to subfields of $\CC(f_2/f_1,f_3/f_1)$,
where $f_1,f_2,f_3$ is a linear basis of solutions of the original Fuchsian equation. 
The largest\footnote{Correspondingly, $(\#\KG)/7=24$ is the smallest degree 
of Darboux coverings that reduce the projective monodromy $\KG$
to a cyclic group.  But smaller degree Darboux coverings 
exist that give pull-back transformations to 
reducible monodromy representations. In \cite{Vid18b}, 
Darboux coverings of degree 21 of the considered Fuchsian equations are given. 
They transform the projective monodromy $\KG$ to a dihedral group.}  
cyclic subgroup of $\KG$ is isomorphic to $\ZZ/7\ZZ$; see the character table \cite[(1.1)]{Elkies}.
By the Galois correspondence, there are pull-back coverings
of degree $(\#\KG)/7=24$ that reduce the projective monodromy group 
to $\ZZ/7\ZZ$. The smallest degree for pull-back coverings  
to Darboux evaluations of the considered $\hpgo32$-functions is thus 24.
These pull-back coverings can be found in these ways: 
\begin{enumerate}
\item Start with a special case where a basis $f_1,f_2,f_3$ of solutions projectively generates
the function field of Klein's curve $\KC$. 
The equations of Halphen \cite{Halphen84} and Hurwitz \cite{Hurwitz86} are suitable.
The subfield of index 7 has genus 0 and is easily computable \cite[p.~67]{Elkies},
as we recall in \S \ref{sec:invkc}.
\item Alternatively, the modular curve $\XX(7)$ is isomorphic to $\KC$ 
and gives a suitable special Fuchsian equation over the $j$-line, 
while the modular curve $\XX_1(7)$ gives the subfield of index 7.
This is shown in \cite[\S 4]{Elkies}, or \S \ref{sec:modular} here.
The $j$-covering $\XX_1(7)\to\CC\PP^1$ is a Belyi map 
with the branching pattern $[7^31^3/3^8/2^{12}]$; see \cite{Hoshino10}.
\item Alternatively, $[7^31^3/3^8/2^{12}]$ is the only feasible degree 24 branching pattern
for the monodromy reduction to $\ZZ/7\ZZ$; see Proposition \ref{th:brpats}.
The only Belyi map with this branching was computed as such in 
\cite{ViAGH}.
\item Use cubic and quadratic transformations of $\hpgo32$-functions \cite{KatoTR}
to derive the degree 24 pull-back coverings for the other cases of hypergeometric functions 
with $\KG$ as the projective monodromy group.
They turn out to be genus 1 Belyi maps with the branching patterns $[7^31^3/4^6/2^{12}]$ or $[7^31^3/7^31^3/2^{12}]$.  
\end{enumerate}
The main result of this article are the degree 24 Darboux coverings $\Phi_3,\Phi_7,\Phi_4$
exhibited \S \ref{sec:c237}, \S \ref{sec:c277}, \S \ref{sec:c247} (respectively), 
and representative samples of Darboux evaluations. 
The starting map $\Phi_3$ is already known by \refpart{i}--\refpart{iii}.

As an application of computed Darboux evaluations,
we prove in \S \ref{sec:modular} the following modular evaluations of considered $\hpgo32$-functions:
\begin{align}
\label{eq:mod7a} K_1(\tau) := & \;
j(\tau)^{1/42}\;\hpg32{\!-\frac1{42},\,\frac{13}{42},\,\frac{9}{14}}{\frac47,\;\frac67}{\frac{1728}{j(\tau)}}  \\
\label{eq:mod7aa} 
= & \; q^{-1/42} \prod_{n=0}^{\infty} \frac1{(1-q^{7n+1})(1-q^{7n+2})(1-q^{7n+5})(1-q^{7n+6})}, \\[3pt]
 \label{eq:mod7b} K_2(\tau) := &\;
j(\tau)^{-5/42}\;\hpg32{\frac5{42},\,\frac{19}{42},\,\frac{11}{14}}{\frac57,\;\frac87}{\frac{1728}{j(\tau)}} \\
\label{eq:mod7bb} 
= & \; q^{5/42} \, \prod_{n=0}^{\infty} \frac1{(1-q^{7n+1})(1-q^{7n+3})(1-q^{7n+4})(1-q^{7n+6})}, \\[3pt] 
\label{eq:mod7c} K_3(\tau) := & \;
j(\tau)^{-17/42}\,\hpg32{\frac{17}{42},\,\frac{31}{42},\,\frac{15}{14}}{\frac97,\;\frac{10}7}{\frac{1728}{j(\tau)}} \\
\label{eq:mod7cc} 
= & \; q^{17/42} \prod_{n=0}^{\infty} \frac1{(1-q^{7n+2})(1-q^{7n+3})(1-q^{7n+4})(1-q^{7n+5})}.
\end{align}
Here $q=\exp(2\pi i \tau)$, and 
\begin{equation} \label{eq:jinv}
j(\tau)=\frac1q+744+196884q+21493760q^2+\ldots
\end{equation}
is the famous modular $j$-invariant.
Considering $j(\tau)$ as a function of $q$, the three identities hold as $q$-series identities.
They are analogues of known hypergeometric expressions \cite{stackex}, \cite[p.~476]{Brox}
for the Rogers-Ramanujan $q$-series \cite{BCKHSS}:
\begin{align} \label{eq:rr1a}
j(\tau)^{1/60}\;\hpg21{\!-\frac1{60},\,\frac{19}{60}}{\frac45}{\frac{1728}{j(\tau)}}
& = q^{-1/60}
\prod_{n=0}^{\infty} \frac1{(1-q^{5n+1})(1-q^{5n+4})} \\
 \label{eq:rr1b} & = q^{-1/60}
\, \sum_{n=0}^{\infty} \frac{q^{n^2}}{(1-q)\cdots(1-q^n)}, \\
 \label{eq:rr2a}
j(\tau)^{-11/60}\;\hpg21{\frac{11}{60},\,\frac{31}{60}}{\frac65}{\frac{1728}{j(\tau)}}
& =  q^{11/60} \prod_{n=0}^{\infty} \frac1{(1-q^{5n+2})(1-q^{5n+3})} \\
 \label{eq:rr2b} & =  q^{11/60}
\, \sum_{n=0}^{\infty} \frac{q^{n^2+n}}{(1-q)\cdots(1-q^n)}. 
\end{align}
The right-hand sides of (\ref{eq:rr1a})--(\ref{eq:rr1b}) and (\ref{eq:rr2a})--(\ref{eq:rr2b})
amount to the famous Rogers-Ramanujan identities \cite{RogersR}.
These $q$-series define modular functions related to the modular curve $\XX(5)$. 
Similarly, the $q$-series in (\ref{eq:mod7aa}), (\ref{eq:mod7bb}), (\ref{eq:mod7cc})
define modular functions related to the modular curve $\XX(7)$. 
They can be recognized in \cite[p.~157]{Duke}.
The $\hpgo32$-expressions in  (\ref{eq:mod7a}), (\ref{eq:mod7b}), (\ref{eq:mod7c})
are considered in \cite[Example 29]{FrancMason}.
We have the following projective parametrization of Klein's curve $\KC$: 
\begin{equation} \label{eq:modpara}
(X:Y:Z) = (-K_3(\tau):K_2(\tau):K_1(\tau));
\end{equation}
see \cite[Proposition 30]{FrancMason}, \cite[\S 5.1.1]{Kato11}.

\section{Preliminaries}

Section \ref{sec:hgp} recalls basic knowledge about differential equations for $\hpgo32$-functions, 
their pull-back transformations, and contiguous relations. 
Section \ref{sec:bhclass} classifies $\hpgo32$-functions 
with the projective monodromy $\KG={\rm PSL}(2,\FF_7)$ up to the contiguity equivalence.
Section \ref{sec:invkc} describes invariants and basic morphisms of Klein's curve $\KC$.
Section \ref{sec:darbcovs} introduces Darboux coverings 
following \cite{ViDarb}.
Section \ref{sec:ourcovs} presents the branching patterns of the considered Darboux coverings,
algebraic relations between them, and the Darboux curves of genus 1.

\subsection{Hypergeometric functions}
\label{sec:hgp}

The hypergeometric function $\hpg32{\!\ia_1,\,\ia_2,\,\ia_3\!}{\ib_1,\,\ib_2}{z}$ 
satisfies the differential equation
\begin{align} \label{eq:hpgde}
\! \left(z\frac{d}{dz}+\ia_1\!\right) \! \left(z\frac{d}{dz}+\ia_2\!\right) \! 
\left(z\frac{d}{dz}+\ia_3\!\right) \! Y(z)  \hspace{132pt} \\
=\frac{d}{dz}  \! \left(z\frac{d}{dz}+\ib_1-1\!\right) \!
\left(z\frac{d}{dz}+\ib_2-1\!\right) \! Y(z), \nonumber
\end{align}
minding the commutativity rule $\frac{d}{dz}z=z\frac{d}{dz}+1$.
This is a third order Fucshian equation with three singular points $z=0$, $z=1$, $z=\infty$.
The singularities and local exponents at them are encoded by the generalized Riemann's $P$-symbol:
\begin{equation}
P\left( \begin{array}{ccc|c} 
z=0 & z=1 & z=\infty & \\ \hline
0 & 0 & \ia_1 & \\
1-\ib_1 & 1 & \ia_2 & z \\
1-\ib_2 & \gamma & \ia_3 &
\end{array}\right) 
\end{equation}
with $\gamma=\ia_1+\ia_2+\ia_3-\ib_1-\ib_2$. 
Generically, a basis of local solutions at $z=0$ and $z=\infty$ can be written in terms
of $\hpgo32$-series. Let $M=(m_{i,j})$ denote the matrix
\begin{equation} \label{eq:hypm}
M=\left( \begin{array}{ccc} 
\ia_1 & \ia_1-\ib_2+1 & \ia_1-\ib_1+1 \\
\ia_2 & \ia_2-\ib_2+1 & \ia_2-\ib_1+1 \\
\ia_3 & \ia_3-\ib_2+1 & \ia_3-\ib_1+1
\end{array}\right).
\end{equation}
A generic basis of local solutions at $z=0$ is
\begin{align}
& \hpg32{m_{1,1\,},\,m_{2,1\,},\,m_{3,1}}{\ib_1,\;\ib_2}{z}, \quad
z^{1-\ib_2}\,\hpg32{m_{1,2\,},\,m_{2,2\,},\,m_{3,2}}{2-\ib_2,\;\ib_1-\ib_2+1}{z}, \quad\nonumber \\
& z^{1-\ib_1}\,\hpg32{m_{1,3\,},\,m_{2,3\,},\,m_{3,3}}{2-\ib_1,\;\ib_2-\ib_1+1}{z},
\end{align}
while a generic basis of local solutions at $z=\infty$ is
\begin{align}
z^{-\ia_j}\,\hpg32{\!m_{j,1\,},\;m_{j,2\,},\;m_{j,3}}{\ia_j-\ia_{k}+1,\ia_j-\ia_{\ell}+1}{z}
\end{align}
with $ j\in\{1,2,3\}$ and $\{k,\ell\}=\{1,2,3\}\setminus\{j\}$.
We refer to the set of 6 functions 
formed by 3 local hypergeometric solutions (disregarding a power factor) at $z=0$ 
and 3 such local solutions at $z=\infty$ of the same third order Fuchsian equation 
as {\em companion hypergeometric functions} to each other.

Let $B$ denote an algebraic curve. Let $\varphi(\ldots)$ denote a rational function on $B$;
it defines an algebraic covering $\varphi:B\to\CC\PP^1$.
A {\em pull-back transformation} with respect to $\varphi$ 
of a differential equation for $y(z)$ in $d/dz$ has the form
\begin{equation} \label{algtransf}
z\longmapsto\varphi(\ldots), \qquad y(z)\longmapsto
Y(\ldots)=\theta(\ldots)\,y(\varphi(\ldots)),
\end{equation}
where 
$\theta(\ldots)$ is a radical function 
on $B$.  The equations that differ by a pull-back transformation with respect to 
the trivial covering \mbox{$\varphi(\ldots)=z$} 
are called {\em projectively equivalent}.

There are several algebraic transformations for $\hpgo32$-functions \cite{KatoTR}.
Here are quadratic and cubic transformations:
\begin{align} \label{eq:t32a}
\hpg32{a,a+\frac14,a+\frac12}{b+\frac14,3a-b+1}{-\frac{4z}{(z-1)^2}} \hspace{-40pt} \\   \displaybreak
& =(1-z)^{2a}\,\hpg32{2a,2a-b+\frac34,b-a}{b+\frac14,3a-b+1}{z}\!, \qquad \nonumber \\ 
\label{eq:t32b} \hpg32{a,a+\frac13,a+\frac23}{b+\frac12,3a-b+1}{\frac{27z}{(4z-1)^3}\!} \hspace{-40pt} \\
& =(1-4z)^{3a}\,\hpg32{3a,2b-3a,3a-2b+1}{b+\frac12,3a-b+1}{z}\!, \nonumber \\ 
\label{eq:t32c} \hpg32{a,a+\frac13,a+\frac23}{b+\frac12,3a-b+1}{\frac{27z^2}{(4-z)^3}\!} \hspace{-40pt} \\
& = \left(1-\frac{z}4\right)^{\!3a}\hpg32{3a,b,3a-b+\frac12}{2b,6a-2b+1}{z}\!.  \nonumber
\end{align}
They can be understood as pull-back transformations between Fuchsian equations
(particularly, (\ref{eq:hpgde})) for hypergeometric functions \cite{ViAGH}, \cite{KatoTR}.
The involved equations have the local exponent $c=1/2$  at $z=1$, 
and the quadratic or cubic arguments $\varphi(z)$
on the left-hand sides have properly branching points in the fiber $\varphi=1$. 
This helps the number of singularities of the pulled-back equation to equal merely 3.

Two $\hpgo32$-functions whose parameters $\ia_1,\ia_2,\ia_3,\ib_1,\ib_2$ 
differ respectively by integers 
are called {\em contiguous} to each other.
This defines a  {\em contiguity equivalence} relation on the $\hpgo32$-functions.
For example, differentiating a $\hpgo32$-function gives a contiguous function, generically:
\begin{equation}
\frac{d}{dz}\hpg32{\ia_1,\ia_2,\ia_3}{\ib_1,\ib_2}{z}=\frac{\ia_1\ia_2\ia_3}{\ib_1\ib_2}\,
\hpg32{\ia_1+1,\ia_2+1,\ia_3+1}{\ib_1+1,\ib_2+1}{z},
\end{equation}
Fuchsian equations of contiguous functions have the same monodromy, generically.
For a generic set of four contiguous $\hpgo32$-functions there is a linear {\em contiguous relation} 
between them \cite{Rainville}. 
For example, differential equation (\ref{eq:hpgde}) can be rewritten as a contiguous
relation between $\hpg32{\!\ia_1+n,\,\ia_2+n,\,\ia_3+n\!}{\ib_1+n,\;\ib_2+n}{z}$ with $n\in\{0,1,2,3\}$. 
Consequently, a contiguous function to a generic $\hpgo32$-function $F$ 
can be expressed linearly in terms of $F$ and its first and second derivatives (thus, as a gauge transformation).
In particular, we have 
\begin{align}
\hpg32{\ia_1+1,\ia_2,\ia_3}{\ib_1,\ib_2}{z} = & 
\left(1+\frac{z}{\ia_1}\,\frac{d}{dz}\right) \hpg32{\ia_1,\ia_2,\ia_3}{\ib_1,\ib_2}{z},\\
\hpg32{\ia_1,\ia_2,\ia_3}{\ib_1-1,\ib_2}{z} = & \left(1+\frac{z}{\ib_1-1}\,\frac{d}{dz}\right)
\hpg32{\ia_1,\ia_2,\ia_3}{\ib_1,\ib_2}{z},
\end{align}
By linear combination, we can derive expressions like
\begin{align} \label{eq:contig11}
& (\ib_1\!-\!1)(\ia_1\!-\!\ib_2)\hpg32{\!\ia_1\!-1,\ia_2,\ia_3}{\ib_1\!-1,\ib_2}{z} = 
\Big( \ia_2\ia_3z+(\ib_1\!-\!1)(\ia_1\!-\!\ib_2) \\
& \qquad +\big((\ia_2\!+\!\ia_3\!+\!1)z+\ia_1\!-\!\ib_1\!-\!\ib_2\big)z\frac{d}{dz}+z^2(z\!-\!1)\frac{d^2}{dz^2}
\Big)\hpg32{\!\ia_1,\ia_2,\ia_3}{\ib_1,\ib_2}{z}. \quad\nonumber
\end{align}

\subsection{Fuchsian equations with the Kleinian monodromy}
\label{sec:bhclass}

Let $E$ denote a 3rd order Fuchsian equation on the Riemann sphere $\CC\PP^1$.
Suppose that $f_1,f_2,f_3$ is a basis of its solutions.
If either the (conventional) monodromy group 
or the differential Galois group \cite{vdPut00} 
of $E$ are finite, those two groups coincide with the classical Galois group of 
the finite field extension $\CC(z,f_1,f_2,f_3)\supset\CC$.
In that case, the {\em projective monodromy group} refers to
the the Galois group of the finite extension $\CC(z,f_2/f_1,f_3/f_1)\supset\CC(z)$.
Both extensions of $\CC(z)$ are Galois extensions, because the monodromy representation
gives linear transformations of $f_1,f_2,f_3$ (in GL$(3,\CC)$)
or fractional-linear transformations of $f_2/f_1,f_3/f_1$ (in PGL$(3,\CC)$).

The following proposition gives a classification of 
$\hpgo32$-functions with the projective monodromy group $\KG$,
up to the contiguity and projective equivalences, 
and up to the symmetries of companion $\hpgo32$-functions. 
\begin{propose} \label{th:3f2cases}
Up to the contiguity and projective equivalences, 
there are six classes of third order hypergeometric equations 
with the projective monodromy group $\KG$. 
Here is our notation for these classes, together with representative $\hpgo32$-solutions:
\begin{align*}
{\rm (3A)}:  & \quad \hpg32{-\frac{3}{14},\frac{1}{14},\frac9{14}}{\frac13,\;\frac23}{z}; &
{\rm (3B)}:  & \quad  \hpg32{-\frac{1}{14},\frac{3}{14},\frac5{14}}{\frac13,\;\frac23}{z}; \\
{\rm (4A)}:  & \quad  \hpg32{-\frac{3}{14},\frac{1}{14},\frac9{14}}{\frac14,\;\frac34}{z}; &
{\rm (4B)}:  & \quad \hpg32{-\frac{1}{14},\frac{3}{14},\frac5{14}}{\frac14,\;\frac34}{z}; \\
{\rm (7A)}:  & \quad \hpg32{-\frac{1}{14},\frac{1}{14},\frac5{14}}{\frac17,\;\frac57}{z}; &
{\rm (7B)}:  & \quad \hpg32{-\frac{1}{14},\frac{1}{14},\frac9{14}}{\frac27,\;\frac67}{z}.
\end{align*}
\end{propose}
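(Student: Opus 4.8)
The plan is to combine the Beukers--Heckman classification of algebraic $\hpgo{p}{p-1}$-functions with the standard dictionary between local exponents of $\hpgo32$-equations and their $P$-symbols. Recall that Beukers--Heckman \cite{BeuHeck} classify irreducible $\hpgo32$-equations with finite monodromy by interlacing conditions on the two exponent tuples $\{e^{2\pi i\ia_1},e^{2\pi i\ia_2},e^{2\pi i\ia_3}\}$ and $\{1,e^{2\pi i(1-\ib_1)},e^{2\pi i(1-\ib_2)}\}$ on the unit circle, and their Table~8.3 singles out exactly the families labelled $2,3,4$ as those whose projective monodromy is $\KG=\PSL(2,\FF_7)$. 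So the first step is to transcribe Table~8.3 into explicit rational parameter tuples $(\ia_1,\ia_2,\ia_3;\ib_1,\ib_2)$ modulo $\ZZ$: all denominators are $7$ or $14$ (with the $\ib_j$ denominators being $3$, $7$, or $14$), and the interlacing forces the numerators to lie in prescribed residue classes mod $14$. This produces a finite list of parameter tuples mod $\ZZ$.

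Next I would quotient this list by the three declared equivalences. \emph{Contiguity} lets me shift each parameter by an arbitrary integer, so only the residues mod $1$ matter and I may normalize representatives into a fixed fundamental range. \emph{Projective equivalence} corresponds to the gauge transformations $y\mapsto z^{\lambda}(1-z)^{\mu}y$ that shift the $P$-symbol columns at $z=0$ and $z=1$ by constants; concretely this means the equation is determined by the local exponent \emph{differences} at each singular point together with the constraint that exponents sum to $1$ per Fuchs' relation. The \emph{companion symmetry} is the $S_3\times(\text{2-element})$ action permuting the three local solutions at $z=0$ among themselves and swapping the roles of $z=0$ and $z=\infty$ (equivalently, the combinatorial symmetries of the $P$-symbol: permuting rows, permuting the columns $0$ and $\infty$, and reflecting $z\mapsto 1/z$ or $z\mapsto z/(z-1)$). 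Carrying out these reductions, the exponent data at $z=1$ is always $\{0,0,\gamma\}$ with $\gamma\in\{2/3,3/4,5/7\}$ up to an integer shift; grouping by this value of $\gamma$ (which is a projective invariant, being tied to the order of the pseudo-reflection at $z=1$) gives the three pairs $\{{\rm 3A,3B}\}$, $\{{\rm 4A,4B}\}$, $\{{\rm 7A,7B}\}$, and within each pair the two $\KG$-orbits on the relevant exponents at $0$ and $\infty$ distinguish the ``A'' and ``B'' class.

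Then I would verify that the six displayed representatives indeed realize these six classes: for each one compute $\gamma=\ia_1+\ia_2+\ia_3-\ib_1-\ib_2$, check the Beukers--Heckman interlacing condition to confirm finite monodromy, and match against Table~8.3 (or directly recognize the parameter tuple as a contiguous/projective/companion shift of \eqref{eq:hurwsol}) to confirm the projective monodromy is $\KG$ rather than a proper subgroup or quotient. Finally I would check that no two of the six are equivalent, by exhibiting a projective invariant that separates them --- the multiset of exponent differences at all three singular points, taken mod $\ZZ$ and up to the companion symmetry, does the job.

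The main obstacle I expect is bookkeeping rather than conceptual: making the companion-symmetry reduction airtight, since the $P$-symbol has a sizeable automorphism group (the $24$ Klein four-fold-extended symmetries coming from the $S_3$ on rows times the anharmonic group on $\{0,1,\infty\}$ that fixes the special point $z=1$, cut down by the requirement that the double exponent at $z=1$ be preserved), and one must be careful that the orbit count is exactly six and not, say, three or twelve. A secondary subtlety is ensuring irreducibility of each equation (so that ``monodromy group'' is unambiguous and the Beukers--Heckman machinery applies); this follows because none of the parameter differences $\ia_i-\ib_j$ is an integer for the listed tuples, which I would note explicitly.
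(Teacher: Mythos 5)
Your plan treats Table~8.3 of \cite{BeuHeck} as a ready-made list of parameter tuples with projective monodromy $\KG$, to be transcribed and then quotiented by contiguity, projective equivalence and the companion symmetry. But Beukers--Heckman classify up to one \emph{further} equivalence, namely Galois conjugation of the parameters (raising the quantities in (\ref{eq:hpgg}) to a $k$-th power with $k$ coprime to the denominator, i.e.\ multiplying $\ia_i,\ib_j$ by a unit modulo $42$, $28$ or $14$), and this is exactly the equivalence the proposition drops. Consequently Table~8.3 contains only \emph{three} entries with monodromy $\KG$ (they represent the B-types), and the procedure you describe would produce three classes, not six; moreover your certification step ``match against Table~8.3'' would fail for, e.g., the (3A) representative, which is not a contiguous/projective/companion shift of any table entry but only a Galois conjugate of one. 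The substance of the paper's proof is precisely the unfolding of this fourth equivalence: multiplying the parameters by $-1$ yields the three A-classes; these are shown to be inequivalent under (i)--(iii) by comparing the matrices (\ref{eq:hypm}) of the representatives, as displayed in (\ref{eq:3f2mat}); and the remaining units modulo $14$ give nothing new (for the (7B) class, multiplication by $-3$ or $-5$ merely permutes the companion solutions (\ref{eq:kl7blb}), while $3$ and $5$ land in (7A)). None of this appears in your proposal, so the count ``six'' is not established.

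A second, concrete error concerns the invariants you invoke. The local exponent difference at $z=1$ is $\equiv\frac12 \pmod\ZZ$ for \emph{all six} classes (the local monodromy there is an order-two reflection of ST24); it does not take the values $2/3,3/4,5/7$ and cannot produce the three pairs --- those are distinguished by the denominators $3,4,7$ of $\ib_1,\ib_2$, i.e.\ by the exponent differences at $z=0$ and $z=\infty$. Worse, the separating invariant you propose for A versus B --- the multisets of exponent differences at the three singular points mod $\ZZ$, up to the companion symmetry --- \emph{coincides} for the two members of each pair: for instance both (7A) and (7B) have difference sets $\{\pm\frac17,\pm\frac27,\pm\frac37\}$ at $z=0$ and at $z=\infty$, and $\frac12$ at $z=1$, and the same happens for (3A)/(3B) and (4A)/(4B). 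The A/B distinction is a finer, joint invariant recording how the exponents at $z=0$ are coupled with those at $z=\infty$ (the matrix $M$ of (\ref{eq:hypm}) taken mod $\ZZ$), which is exactly why the paper checks the $M$-matrices rather than local exponent data; as written, your final ``no two are equivalent'' step would not go through.
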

\begin{proof}
The classification in \cite[\S 7]{BeuHeck} is up to the following transformations that preserve
a finite primitive monodromy group:
\begin{enumerate}
\item up to the contiguity equivalence, as integer shifts of $\ia_1,\ia_2,\ia_3,\ib_1,\ib_2$ 
are discarded by considering the parameters 
\begin{align} \label{eq:hpgg}
a_1 & = c\,\exp(2\pi i\ia_1), & a_2 & =c\,\exp(2\pi i\ia_2), & a_3 & =c\, \exp(2\pi i\ia_3), \qquad\nonumber  \\
b_1 & =c\,\exp(2\pi i \ib_1),& b_2 & = c\,\exp(2\pi i\ib_2), & b_3 & = c, 
\end{align}
of a {\em hypergeometric group} \cite[Definition 3.1]{BeuHeck};
\item up to {\em scalar shifts} \cite[Definition 5.5]{BeuHeck} affecting $c$;
they amount to projective equivalence;
\item up to interchange of the sets $\{a_1,a_2,a_3\}$, $\{b_1,b_2,b_3\}$ 
reflecting a symmetry of companion $\hpgo32$-functions; see \cite[Theorem 7.1]{BeuHeck};
\item assuming that the parameters in (\ref{eq:hpgg}) generate a cyclotomic field $\QQ(\exp(2\pi i/n))$,
up to raising those parameters to the $k$th power, with $k\in\ZZ$ coprime to $n$.
\end{enumerate}
We drop the equivalence \refpart{iv}. 
It amounts to multiplying the parameters $\ia_1,\ia_2,\ia_3,\ib_1,\ib_2$ by an integer coprime to their denominators
(and to denominators of the powers in a projective equivalence factor $\theta(\ldots)$, strictly speaking). 

In \cite[Table 8.3]{BeuHeck}, there are 3 cases of $\hpgo32$-functions 
with the projective monodromy $\KG$, numbered 2, 3, 4. 
Their indicated parameters $\ia_1,\ia_2,\ia_3,\ib_1,\ib_2$
represent our cases (3B), (4B), (7B), as the $\hpgo32$-representatives given here 
differ by the integer shift $13/14\mapsto -1/14$, or by shifting all parameters of the (7B)-representative
by $-2/7$ modulo integers. The types  (3A), (4A), (7A) are obtained by multiplying the parameters by $-1$
and making integer shifts such as $-5/14\mapsto 9/14$, $-1/3\mapsto 2/3$, etc. 
To see that equivalences \refpart{i}--\refpart{iii} do not give the same transformation,
one can check the $M$-matrices (\ref{eq:hypm}) of the starting representatives:
\begin{equation}  \label{eq:3f2mat}
\left( \begin{array}{ccc} 
\!-\frac{1}{14} & \frac{11}{42} & \frac{25}{42} \\[3pt]
\frac{3}{14} & \frac{23}{42} & \frac{37}{42} \\[3pt]
\frac{5}{14} & \frac{29}{42} & \frac{43}{42}
\end{array}\right)\!, \quad 
\left( \begin{array}{ccc} 
\!-\frac{1}{14} & \frac5{28} & \frac{19}{28} \\[3pt]
\frac{3}{14} & \frac{13}{28} & \frac{27}{28} \\[3pt]
\frac{5}{14}& \frac{17}{28} & \frac{31}{28}
\end{array}\right)\!, \quad 
\left( \begin{array}{ccc} 
\!-\frac{1}{14} &\frac{1}{14} & \frac{9}{14} \\[3pt]
\frac{1}{14} & \frac{3}{14} & \frac{11}{14} \\[3pt]
\frac{9}{14}& \frac{11}{14} & \frac{19}{14}
\end{array}\right)\!. 
\end{equation}
To check that there are no other types, one observes that multiplication of the parameters
$\ia_1,\ia_2,\ia_3,\ib_1,\ib_2$ by other odd integers modulo 14 (and coprime to 3 in the case (3B))
leads to the displayed cases modulo integer shifts.
This check is easy in the cases (3B) and (4B), as the set $\{\ib_1\!\mod\ZZ,\ib_2\!\mod\ZZ\}$ stays the same,
and each set of companion $\hpgo32$-functions contains only one equation where the denominators 
of $\ia_1,\ia_2,\ia_3$ all equal to 14. In the case (7B), 
the multiplications by $-3$ or $-5$ permute (up to the contiguity equivalence) 
the $\hpgo32$-functions in this basis of local solutions at $z=0$:
\begin{equation} \label{eq:kl7blb} \hspace{-2pt}
\hpg32{\!-\frac1{14},\,\frac1{14},\,\frac{9}{14}}{\frac27,\;\frac67}{z}\!,\,
z^{1/7}\hpg32{\!\frac1{14},\,\frac3{14},\,\frac{11}{14}}{\frac37,\;\frac87}{z}\!,\, 
z^{5/7}\hpg32{\!\frac{9}{14},\,\frac{11}{14},\,\frac{19}{14}}{\frac{11}7,\;\frac{12}7}{z}\!.
\hspace{-3pt}
\end{equation}
The multiplications by $3$ and $5$ then automatically give hypergeometric functions of type (7A).
\end{proof}
It is instructive to observe the interlacing condition \cite[Theorem 4.8]{BeuHeck}
on the 6 representative $\hpgo32$-functions.

\subsection{Invariants for Klein's curve}
\label{sec:invkc}

The $\hpgo32$-functions of type (3A) are closely related to Klein's curve $\KC$ 
and to modular functions on $\XX(7)\cong\KC$. In particular,
the hypergeometric identity \cite[Proposition 30]{FrancMason} 
\begin{align} \label{eq:kleinhpg} 
\hpg32{\frac5{42},\,\frac{19}{42},\,\frac{11}{14}}{\frac57,\;\frac87}{x}^{\!3}
\hpg32{\!-\frac1{42},\,\frac{13}{42},\,\frac{9}{14}}{\frac47,\;\frac67}{x} = \hspace{-103pt} \\ 
& \hpg32{\!-\frac1{42},\,\frac{13}{42},\,\frac{9}{14}}{\frac47,\;\frac67}{x}^{\!3}
\hpg32{\frac{17}{42},\,\frac{31}{42},\,\frac{15}{14}}{\frac97,\;\frac{10}7}{x}  \nonumber \\
& +\frac{x}{1728}\,\hpg32{\frac{17}{42},\,\frac{31}{42},\,\frac{15}{14}}{\frac97,\;\frac{10}7}{x}^{\!3}
\hpg32{\frac5{42},\,\frac{19}{42},\,\frac{11}{14}}{\frac57,\;\frac87}{x} \qquad \nonumber
\end{align}
realizes a parametrization of 
$\KC$ by solutions of a third order Fuchsian equation
that was anticipated by Klein \cite[a footnote in \S9]{Klein79},
and constructed by Halphen \cite{Halphen84}, Hurwitz \cite{Hurwitz86}.
We indicated this parametrization in (\ref{eq:modpara}).

The symmetry group $\KG=\PSL(2,\FF_7)$ of $\KC$ is realized by a 3-dimensional representation of $\KG$
acting linearly on the coordinates $X,Y,Z$ in (\ref{eq:klein}).
Here we remind invariants of this 3-dimensional representation,
present the degree 168 Galois covering $\KC\to\CC\PP^1$,
and present the quotient map of this covering by the largest cyclic subgroup $\ZZ/7\ZZ$ of $\KG$. 
This gives the degree $24=168/7$ Belyi map for Darboux evaluations
of $\hpgo32$-functions of type (3A), 
and prepares us for the modular application of Darboux evaluations in \S \ref{sec:level7}.

The 3-dimensional representation (over $\CC$) of $\KG=\PSL(2,\FF_7)$ 
has these invariants in a convenient  basis \cite[\S 6]{Klein79}, \cite[\S 1.2]{Elkies}:
\begin{align}
R_4 = & \, X^3Y+Y^3Z+Z^3X,\\  
\label{eq:inv6}
R_6 = & \, XY^5+YZ^5+ZX^5-5X^2Y^2Z^2,\\
R_{14}\! = & \, X^{14}+Y^{14}+Z^{14}
 +375\,(X^8Y^4\!Z^2+X^4Y^2\!Z^8+X^2Y^8\!Z^4)   \\ &
\! +18\,(X^7Y^7+X^7Z^7+Y^7Z^7)  
 -126\,(X^8Y^3\!Z^5+X^5Y^6\!Z^3+X^3Y^5\!Z^6)  \nonumber \\ &
\! -34(X^{11}Y^2\!Z+XY^{11}\!Z^2+X^2Y\!Z^{11}) 
 -250(X^9Y\!Z^4+X^4Y^9\!Z+XY^4\!Z^9). \nonumber 
\end{align}
The polynomial ring $\CC[R_4,R_6,R_{14}]$ is the ring of invariants 
for the extended representation of complex reflection group $\KG\times(\ZZ/2\ZZ)$ (which is ST24 in \cite{STrg}). 
The representation of $\KG$ has another invariant
\begin{equation}
R_{21}=\frac{1}{14}\,\mbox{det}\!
\left( \begin{array}{ccc}
\partial R_4/\partial X & \partial R_4/\partial Y & \partial R_4/\partial Z \\ 
\partial R_6/\partial X & \partial R_6/\partial Y & \partial R_6/\partial Z \\ 
\partial R_{14}/\partial X & \partial R_{14}/\partial Y & \partial R_{14}/\partial Z 
\end{array} \right),
\end{equation}
and there is a polynomial relation \cite[(1.17)]{Elkies}
\begin{equation} \label{eq:r21r14}
R_{21}^{\,2} \equiv R_{14}^{\,3}-1728\,R_{6}^{\,7} \quad \mbox{mod} \ R_4.
\end{equation}
As Klein's quartic curve $\KC$ is defined by $R_4=0$,
this functional congruence 
means \cite[(2.13)]{Elkies} that the map
\begin{align} \label{eq:galois168}
\Phi_0 & = \frac{1728\,R_{6}^{\,7}}{R_{14}^{\,3}} 
 =1-\frac{R_{21}^{\,2}}{R_{14}^{\,3}} 
\end{align}
defines a Galois covering $\KC\to\CC\PP^1$ 
with the Galois group $\KG$.
The covering is a Belyi map with the passport $[7^{24}/3^{56}/2^{84}]$, 
of genus 3 indeed.

The quotient covering of $\KC\to\CC\PP^1$ by $\ZZ/7\ZZ$ is given by the map \cite[(2.1)]{Elkies}
\begin{equation}
(X:Y:Z)\mapsto (a:b:c)=(X^3Y:Y^3Z:Z^3X)
\end{equation}
onto the line $a+b+c=0$ in $\CC\PP^2$. 
The quotient curve is of genus 0, therefore.
Klein's curve $\KC$ is birational to the cyclic covering $(Y/Z)^7=ab^2/c^3$. 
If we take\footnote{
We choose a different parametrization than in \cite[p.~67]{Elkies} in order to have $x_7(\tau)=O(q)$ in 
(\ref{eq:defx7}) with consistency.}
\begin{equation} \label{eq:cover7}
x=-\frac{a}{c}=-\frac{X^2Y}{Z^3}, \qquad y=-\frac{Y}{Z},
\end{equation} 
then the degree 7 cyclic covering is
\begin{equation}
y^7=x\,(x-1)^2.
\end{equation}

\subsection{Darboux coverings}
\label{sec:darbcovs}

The notions of {\em Darboux curves}, {\em Darboux coverings} 
and {\em Darboux evaluations} are introduced in \cite{ViDarb}.
The terminology is motivated by integration theory of vector fields \cite{DarbVF},
where {\em Darboux polynomials} determine invariant hypersurfaces.
In differential Galois theory \cite{Weil95}, Darboux polynomials are specified
by algebraic solutions of an associated Riccati equation.
Here is a formulation of \cite[Definition 3.1]{ViDarb}.
\begin{definition} 
Consider a linear homogeneous differential equation
\begin{equation} \label{eq:genlde}
\frac{d^n}{dz^n}+a_{n-1}(z)\frac{d^{n-1}}{dz^{n-1}}+\ldots
+a_1(z)\frac{d}{dz}+a_0(z)=0
\end{equation}
on $\CC\PP^1$, thus with $a_i(z)\in\CC(z)$. 
We say that an algebraic covering $\varphi:B\to\CC\PP^1$ is a {\em Darboux covering} for $(\ref{eq:genlde})$
if a pull-back transformation $(\ref{algtransf})$ of it with respect to $\varphi$ has a solution $Y$ such that:
\begin{enumerate}
\item[\refpart{a}] the logarithmic derivative $u=Y'/Y$ is 
a rational function on the algebraic curve $B$;
\item[\refpart{b}] the algebraic degree of $u$ over $\CC(z)$ equals the degree of $\varphi$.
\end{enumerate}
The algebraic curve $B$ is then called a {\em Darboux curve}.
\end{definition}
Condition \refpart{a} means that the monodromy representation of the pulled-back equation
has a one-dimensional invariant subspace (generated by $Y$). 
In contrast, cyclic monodromy implies a completely reducible monodromy representation,
thus a basis of $n$ radical solutions (whose logarithmic derivatives solve the Riccati equation).
This article emphasizes reduction to a cyclic projective monodromy.
As shown in \cite{Vid18b}, reduction of $\KG$ to a dihedral monodromy group 
leads to a reducible 3-dimensional monodromy representation, hence a Darboux evaluation.

Determination of Darboux coverings is made easier by their basic properties.  
The following lemma underlines that Darboux coverings are ``invariant" 
under transformations of hypergeometric equations that preserve the monodromy.
\begin{lemma}
Let $E_1$ denote a hypergeometric equation $(\ref{eq:hpgde})$ with a finite primitive monodromy group.
Suppose that other hypergeometric equation $E_2$ is related to $E_1$ by 
transformations described in \refpart{i}, \refpart{iv} within the proof of Proposition $\ref{th:3f2cases}$.
If $\varphi:B\to\CC\PP^1$ is a Darboux covering for $E_1$,
then $\varphi$ is a Darboux covering for $E_2$ as well.
\end{lemma}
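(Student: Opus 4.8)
The plan is to unwind the definitions and show that the two transformations in question do not affect the invariant-subspace structure that a Darboux covering witnesses. Recall that transformation \refpart{i} is the contiguity equivalence: $E_2$ is obtained from $E_1$ by integer shifts of the parameters $\ia_1,\ia_2,\ia_3,\ib_1,\ib_2$, which (generically, as guaranteed by the finiteness and primitivity hypothesis that rules out the degenerate cases) amounts to a gauge transformation $y\mapsto r_0(z)y+r_1(z)y'+r_2(z)y''$ with $r_i(z)\in\CC(z)$, as displayed in the discussion preceding this lemma; in particular $E_1$ and $E_2$ have the same (projective) monodromy group. Transformation \refpart{iv} multiplies all parameters by an integer $k$ coprime to the common denominator $n$; on the level of the hypergeometric group this is the Galois action of $\mathrm{Gal}(\QQ(\exp(2\pi i/n))/\QQ)$ sending $\zeta_n\mapsto\zeta_n^k$, and it likewise sends a Fuchsian equation with monodromy in $\KG$ to another such equation, the monodromy matrices being the Galois conjugates of the originals.

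First I would make precise what it means for $\varphi:B\to\CC\PP^1$ to be a Darboux covering for a hypergeometric equation $E$: by the definition, the pull-back of $E$ along $\varphi$ has a solution $Y$ whose logarithmic derivative $u=Y'/Y$ lies in $\CC(B)$ and has degree over $\CC(z)$ equal to $\deg\varphi$. Equivalently --- and this is the reformulation I would use --- $E$ itself has a solution $y$ on some degree-$\deg\varphi$ extension of $\CC(z)$ (namely $\CC(B)$, via $\varphi^*$) with $y'/y\in\CC(B)$; i.e.\ the monodromy representation of $E$, restricted to the subgroup corresponding to $\CC(B)$, has a one-dimensional invariant subspace, and $\CC(B)$ is the fixed field of the stabilizer of (the line spanned by) that subspace. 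Phrasing the Darboux condition purely in terms of the monodromy representation and the Galois correspondence is the crux: once done, the invariance under \refpart{i} and \refpart{iv} is almost automatic.

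Next, for transformation \refpart{i}: since $E_2$ is gauge-equivalent to $E_1$ over $\CC(z)$, there is a $\CC(z)$-linear isomorphism of solution spaces intertwining the two monodromy representations; hence a line fixed by a subgroup $H$ for $E_1$ corresponds to a line fixed by $H$ for $E_2$, the fixed field $\CC(B)$ is unchanged, and the degree condition \refpart{b} persists. Explicitly, if $y$ solves $E_1$ over $\CC(B)$ with $y'/y\in\CC(B)$, then $\tilde y=r_0 y+r_1 y'+r_2 y''$ solves $E_2$ and still lies in $\CC(B)$, while $\tilde y/y\in\CC(B)$ so $\tilde y'/\tilde y=(y'/y)+(\tilde y/y)'/(\tilde y/y)\in\CC(B)$; the degree over $\CC(z)$ cannot drop because the projective monodromy is unchanged and $\CC(B)$ is the same field. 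For \refpart{iv}: applying the field automorphism $\sigma:\zeta_n\mapsto\zeta_n^k$ of $\QQ(\zeta_n)$ conjugates the monodromy group of $E_1$ into that of $E_2$ (as abstract subgroups of $\PGL(3,\CC)$ they are isomorphic via $\sigma$, both being $\KG$), carrying the stabilizer subgroup and its fixed field to a conjugate configuration of the same index $\deg\varphi$; since $\varphi:B\to\CC\PP^1$ depends only on that index-$\deg\varphi$ subfield structure --- and here I would invoke that the Darboux curve for the monodromy reduction $\KG\to\ZZ/7\ZZ$ is determined up to isomorphism by $\KG$ and the choice of cyclic subgroup, $\sigma$ fixing $\KG$ and permuting its (conjugate, hence equivalent) $\ZZ/7\ZZ$-subgroups --- the same $\varphi$ serves $E_2$.

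The main obstacle I anticipate is the genericity caveat hidden in ``generically'' throughout \S\ref{sec:hgp}: the gauge equivalence for \refpart{i} can fail at special parameter values where a contiguous $\hpgo32$-function degenerates (e.g.\ when some $m_{i,j}$ or $\ib_j$ hits a nonpositive integer), and one must argue that the finiteness-and-primitivity hypothesis on the monodromy of $E_1$ excludes exactly these degeneracies --- or, more cleanly, that even in such cases $E_1$ and $E_2$ still share the same Fuchsian monodromy because they lie in one contiguity class whose members are, by Proposition~\ref{th:3f2cases}, all honest rank-3 Fuchsian equations with monodromy $\KG$. I would handle this by working at the level of the local exponent differences (which are preserved mod $\ZZ$ under \refpart{i}) and the Riemann scheme rather than with explicit $\hpgo32$-series, so that ``the monodromy is unchanged'' is a statement about connection data that holds without genericity. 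The rest is bookkeeping with the Galois correspondence.
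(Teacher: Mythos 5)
Your proposal is correct and takes essentially the same route as the paper: both transformations \refpart{i} and \refpart{iv} leave the monodromy unchanged up to isomorphism (gauge equivalence) or Galois twist, so the equations pulled back along the same $\varphi$ have matching one-dimensional invariant subspaces, i.e.\ radical solutions, with the degree condition preserved because the stabilizer of the invariant line is carried over. Your closing detour in \refpart{iv} through the conjugacy of $\ZZ/7\ZZ$-subgroups of $\KG$ is unnecessary (and tied to one application of a lemma stated for arbitrary finite primitive monodromy); the Galois-twist argument you give just before it already suffices, and your extra care about genericity of contiguity and about condition \refpart{b} only adds detail the paper leaves implicit.
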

\begin{proof}
The transformations \refpart{i}, \refpart{iv} do not affect the primitive monodromy group,
thus equations $E_1,E_2$ have isomorphic monodromies.
Let $E_1^*$, $E_2^*$ denote the Fuchsian equations obtained from $E_1,E_2$, respectively,
by applying the same pull-back transformation with respect to $\varphi$.
The monodromies of $E_1^*$, $E_2^*$ are isomorphic. 
Therefore, if one has a radical solution so does the other.

(Compare with \cite[Lemmas 3.2 and 3.7]{ViDarb}. 
Projective equivalence \refpart{ii} allows the same Darboux covering $\varphi$ trivially. 
It affects the monodromy group by a cyclic direct factor.
Transformation \refpart{iii} offers $1/\varphi$ for the respective Darboux covering.)
\end{proof}

\begin{corollary} \label{th:darb3s}
The same degree $24$ Darboux coverings $\varphi$ or $1/\varphi$ 
for reduction of the projective monodromy $\KG=\rm {PSL}(2,\FF_7)$ 
of hypergeometric equation $(\ref{eq:hpgde})$
to $\ZZ/7\ZZ$ apply to all hypergeometric functions of the types {\rm (3A)} and {\rm (3B)};
or to all  $\hpgo32$-functions of the types {\rm (4A)} and {\rm (4B)}; 
or to all  $\hpgo32$-functions of the types {\rm (7A)} and {\rm (7B)}.
\end{corollary}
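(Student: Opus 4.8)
The plan is to read the statement off from the preceding Lemma together with the explicit description of the six classes in Proposition~\ref{th:3f2cases}. First I would record how a Darboux covering behaves under each of the four equivalences \refpart{i}--\refpart{iv} used in the proof of that proposition: by the Lemma, the contiguity equivalence \refpart{i} and the power-map equivalence \refpart{iv} preserve the set of Darboux coverings of the hypergeometric equation; by the remarks in the Lemma's proof, the scalar/projective equivalence \refpart{ii} preserves it trivially, while the companion interchange \refpart{iii} replaces a Darboux covering $\varphi$ by $1/\varphi$. Thus a covering that is Darboux for one hypergeometric equation in a given contiguity--projective class is, up to the swap $\varphi\leftrightarrow 1/\varphi$, Darboux for every hypergeometric equation in that class.

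Next I would link the two classes in each pair. The proof of Proposition~\ref{th:3f2cases} already records that the (3A)-representative is obtained from the (3B)-representative by multiplying $\ia_1,\ia_2,\ia_3,\ib_1,\ib_2$ by $-1$ and then performing integer shifts (such as $-5/14\mapsto 9/14$ or $-1/3\mapsto 2/3$), and likewise for the (4A)/(4B) and (7A)/(7B) pairs. Multiplication of all parameters by $-1$ is precisely the power-map equivalence \refpart{iv} with $k=-1$, which is admissible because $k=-1$ is coprime to every relevant modulus (the cyclotomic conductor $n$ attached to the parameters, and the denominators of any radical prefactor $\theta$); the subsequent integer shifts are instances of \refpart{i}. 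Hence the Lemma applies and shows that within each pair the two listed representatives share exactly the same Darboux coverings, in particular the degree~$24$ ones reducing the projective monodromy $\KG$ to $\ZZ/7\ZZ$.

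Combining the two steps finishes the proof: any $\hpgo32$-function of type (3A) or (3B) is obtained from one of the two representatives by a composition of \refpart{i}--\refpart{iv}, and tracking the effect of each factor as above shows that a single degree-$24$ Darboux covering $\varphi$ — or its reciprocal $1/\varphi$, according to the parity of the number of companion interchanges used — realizes the reduction of $\KG$ to $\ZZ/7\ZZ$ for all of them; the same reasoning handles the (4A)/(4B) and (7A)/(7B) pairs. That the degree equals $24=(\#\KG)/7$ is the minimality observation from the Introduction, and the existence of such $\varphi$ is the content of the later sections, not needed here. I do not expect a genuine obstacle; the only point requiring a moment's care is confirming that the normalization ``multiply the parameters by $-1$'' really stays inside equivalence \refpart{iv} — that is, that $-1$ is coprime to all the moduli in sight, including any hidden in the radical prefactors — which is immediate.
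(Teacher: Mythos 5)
Your argument is correct and follows essentially the same route as the paper: each type is a single class under contiguity/projective/companion equivalence, the (A)/(B) pairs are related by the power-map equivalence \refpart{iv} (multiplication of the parameters by $-1$ plus integer shifts, i.e.\ \refpart{i}), and the preceding Lemma (with its parenthetical remarks covering \refpart{ii} and \refpart{iii}, the latter giving the $\varphi\leftrightarrow 1/\varphi$ swap) transfers the Darboux covering across all these transformations. The paper's proof is just a terser version of exactly this.
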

\begin{proof}
Each of the six types describes an equivalence class under the contiguity equivalence.
The pairs of types {\rm (3A)}, {\rm (3B)}; or {\rm (4A)}, {\rm (4B)}; or {\rm (7A)}, {\rm (7B)}
are related by transformation \refpart{iv} within the proof of Proposition $\ref{th:3f2cases}$.
\end{proof}

To match hypergeometric functions with radical solutions of a pulled-back Fuchsian equation
for Darboux evaluations, the next 
lemma is useful. It applies to 
equations obtained by the considered  degree 24 pull-back transformations. 
\begin{lemma} \label{th:locexps}
Suppose that differential equation $(\ref{eq:genlde})$ has a finite cyclic monodromy group.
If the local exponents $\lambda_1,\ldots,\lambda_n$ 
at a point $P\in\CC\PP^1$  are all different modulo $\ZZ$,
then for each local exponent $\lambda_i$  ($i\in\{1,\ldots,n\}$) 
there is exactly one (up to scalar multiplication) radical solution 
with the vanishing order $\lambda$ 
at $P$. 
\end{lemma}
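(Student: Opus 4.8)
The plan is to use the completely reducible monodromy that a finite cyclic group forces, and then count vanishing orders at $P$. Since the monodromy group $G$ of $(\ref{eq:genlde})$ is finite cyclic, the monodromy representation $\rho:G\to{\rm GL}(n,\CC)$ is diagonalizable: $G$ being cyclic, $\rho$ decomposes into a direct sum of one-dimensional characters, so the solution space admits a basis $y_1,\ldots,y_n$ each spanning a $G$-stable line. Each such $y_k$ satisfies $\sigma(y_k)=\zeta_k\,y_k$ for the generator $\sigma$ of the local monodromy around $P$ with $\zeta_k$ a root of unity, hence $y_k$ is a radical (algebraic power) function: its logarithmic derivative is rational on the curve, and $y_k=(\text{local coordinate at }P)^{\mu_k}\times(\text{holomorphic unit})$ for some $\mu_k\in\CC$ with $\exp(2\pi i\mu_k)$ the eigenvalue of the \emph{local} monodromy at $P$ on $y_k$. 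Because $\rho$ is globally diagonalizable, the same basis $y_1,\ldots,y_n$ diagonalizes the local monodromy at $P$ as well, so these $n$ exponents $\mu_1,\ldots,\mu_n$ are precisely the local exponents $\lambda_1,\ldots,\lambda_n$ at $P$ in some order — there is no logarithmic (Jordan block) contribution, and no shuffling is needed.

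Next I would use the hypothesis that $\lambda_1,\ldots,\lambda_n$ are pairwise distinct modulo $\ZZ$. Fix $i$ and consider the set $S_i$ of radical solutions with vanishing order $\lambda_i$ at $P$. First, $y_i\in S_i$ (up to relabeling), so $S_i\neq\varnothing$. For uniqueness, suppose $Y$ is any radical solution vanishing to order $\lambda_i$ at $P$. Write $Y=\sum_k c_k y_k$ in the diagonalizing basis. A radical solution has a single, well-defined local exponent at $P$ (its logarithmic derivative has a simple pole there with residue equal to that exponent), so $Y$ must be supported on those $y_k$ whose local exponent $\lambda_k$ is congruent to $\lambda_i$ modulo $\ZZ$ — otherwise the sum would have two distinct leading fractional powers and could not be a pure power function. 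By the distinctness hypothesis the only such index is $k=i$, hence $Y=c_i y_i$, a scalar multiple of $y_i$. This establishes both existence and uniqueness up to scalar multiplication.

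The main obstacle is justifying the one clean reduction step: that a radical solution really does have a single local exponent modulo $\ZZ$ at $P$, i.e. that $Y'/Y$ being rational on $B$ forces $Y$ to behave like a single power of the local parameter at $P$ rather than a sum of powers. This is where the argument must be made carefully: one expands $Y$ in a Puiser/formal-power basis at $P$, notes that $Y'/Y$ rational means $Y'/Y$ has at worst a simple pole at $P$ with a constant residue $r$, integrates to get $Y=(\text{local parameter})^{r}\cdot(\text{meromorphic unit})$, and concludes $r\equiv\lambda_k\pmod{\ZZ}$ for the unique $k$ on whose eigenline $Y$ lies. Once this local normal form is in hand, the combinatorial counting against the distinct-mod-$\ZZ$ hypothesis is immediate, and matching $r=\lambda_i$ exactly (not just modulo $\ZZ$) pins $Y$ to $c_i y_i$.
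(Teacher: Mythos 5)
Your proposal is correct and follows essentially the same route as the paper: complete reducibility of the finite cyclic monodromy yields a basis of $n$ radical solutions, and the hypothesis that the exponents are distinct modulo $\ZZ$ forces these to realize each local exponent exactly once, with uniqueness up to scalars. The only cosmetic difference is that you settle uniqueness via the one-dimensional eigenspaces of the local monodromy at $P$ (each radical solution being locally a single power times a unit), whereas the paper observes that two radical solutions sharing an exponent $\lambda^*$ would yield a linear combination of vanishing order $\lambda^*+k$ with $k\in\ZZ_{>0}$, which cannot occur.
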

\begin{proof} (Compare with \cite[Lemma 3.6]{ViDarb}.)
A monodromy representation of the finite cyclic group is completely reducible. 
Therefore, the solution space is generated by $n$ radical solutions 
(that give the direct decomposition into $n$ one-dimensional spaces).
If two radical solutions had the same local exponent $\lambda^*$ at $P$,
their linear combination would have the vanishing order $\lambda^*+k$ with integer $k>0$.
Hence there is a radical solution for each of the $n$ local exponents.
\end{proof}
A divisor (with coefficients in $\QQ$) of a radical function $f$ on an algebraic curve $B$
is well defined, because an integer power of $f$ is a rational function on $B$.
Conversely, a divisor with $\QQ$-coefficients on $B$ determines 
a radical function up to scalar multiplication.
To compute Darboux evaluations, we usually first determine possible divisors of 
radical solutions (of a pulled-back Fuchsian equation) on a Darboux curve.

\begin{remark} \rm
The simplest Darboux evaluations are reductions of a dihedral monodromy group 
to a cyclic monodromy group by a quadratic transformation \cite{VidunasDh}. 
The Darboux covering is $z=x^2$ in this well-known formula:
\begin{align} \label{eq:dih1}
\hpg21{a,a+\frac{1}2}{\frac12}{\,z} & =\frac{(1-\sqrt{z})^{-2a}+(1+\sqrt{z})^{-2a}}2.
\end{align}
The monodromy group of the hypergeometric equation is finite when $a\in\QQ\setminus\{0\}$. 
Less known are these variations of dihedral formulas:
\begin{align}  \label{eq:dihe1}
\hpg{2}{1}{a,\,-a}{\frac{1}{2}}{\frac{x^2}{x^2-1}} & = \frac12
\left( \left( \frac{1+x}{1-x} \right)^{\!a} + \left( \frac{1-x}{1+x} \right)^{\!a\,} \right),\\  \label{eq:dihe2}
\hpg{2}{1}{\!\frac12+a,\frac12-a}{\frac{1}{2}}{\frac{x^2}{x^2-1}} & = \frac{\sqrt{1-x^2}}2
\left( \left( \frac{1+x}{1-x} \right)^{\!a} + \left( \frac{1-x}{1+x} \right)^{\!a\,} \right),\\  \label{eq:dihe3}
\hpg{2}{1}{\!\frac12+a,\frac12-a}{\frac{3}{2}}{\frac{x^2}{x^2-1}} & = \frac{\sqrt{1-x^2}}{4ax}
\left( \left( \frac{1+x}{1-x} \right)^{\!a} - \left( \frac{1-x}{1+x} \right)^{\!a\,} \right).
\end{align}
\end{remark}

\subsection{Three Darboux coverings}
\label{sec:ourcovs}

By Corollary \ref{th:darb3s}, there are separate sets of Darboux coverings for the type pairs
(3A), (3B), or (4A), (4B), or (7A), (7B). Here we show that these sets consist of single Belyi coverings
(up to holomorphic symmetries of the Darboux curves). The three Belyi maps are
related to each other by algebraic correspondences induced by quadratic and cubic transformations
(\ref{eq:t32a}), (\ref{eq:t32c}) between $\hpgo32$-functions of the different types.
The Darboux curves for the types (4A), (4B) and (7A), (7B) have genus 1.
Their Weierstra\ss{} forms are presented  in formulas (\ref{eq:darbc7}), (\ref{eq:darbc4}).

\begin{propose} \label{th:brpats}
Darboux coverings that reduce the projective monodromy group $\KG=\PSL(2,\FF_7)$
of a third order hypergeometric equation $(\ref{eq:hpgde})$ to $\ZZ/7\ZZ$
are Belyi maps with the following branching patterns (and genus $g$):
\begin{align*}
\mbox{for the types {\rm (3A)} and \rm (3B):} & \qquad  [7^31^3/3^8/2^{12}] & (g=0); \\
\mbox{for the types {\rm (4A)} and \rm (4B):} & \qquad  [7^31^3/4^6/2^{12}] & (g=1); \\
\mbox{for the types {\rm (7A)} and \rm (7B):} & \qquad  [7^31^3/7^31^3/2^{12}] & (g=1).
\end{align*}
\end{propose}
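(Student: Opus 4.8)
The plan is to determine the branching patterns from the combinatorics of the Darboux covering, using only local-exponent data of the hypergeometric equations. A degree-24 Darboux covering $\varphi:B\to\CC\PP^1$ reducing $\KG$ to $\ZZ/7\ZZ$ must refine the Galois covering $\Phi_0:\KC\to\CC\PP^1$ of \eqref{eq:galois168}; concretely, the composition $\KC\to B\to\CC\PP^1$ is $\Phi_0$, where $\KC\to B$ is the quotient by a cyclic subgroup $\ZZ/7\ZZ$ of $\KG$. Since the intermediate quotient by $\ZZ/7\ZZ$ has genus $0$ (this is exactly the computation recalled in \S\ref{sec:invkc}, giving $y^7=x(x-1)^2$), the Darboux curve for the base case (3A)/(3B) has genus $0$, and $\varphi$ is a Belyi map because $\Phi_0$ is. To pin down its branching pattern I would push the ramification of $\Phi_0=[7^{24}/3^{56}/2^{84}]$ down through the degree-$7$ quotient $\KC\to B$: over $z=\infty$ the $24$ points of ramification order $7$ on $\KC$ are permuted by $\ZZ/7\ZZ$; the fixed points of the subgroup (there are $3$ of them, since an order-$7$ element of $\PSL(2,\FF_7)$ has exactly $3$ fixed points on $\KC$, equivalently the three points where $xyz$-type coordinates vanish) descend to $3$ unramified points of $B$ lying over $\infty$ with their preimage-in-$\KC$ ramification already accounting for $7$, while the remaining $21$ points fall into $3$ orbits of size $7$ descending to $3$ points of $B$ that are ramified of order $7$ in $\varphi$. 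This yields the fiber $[7^31^3]$ over $z=\infty$. Over $z=0$ the $56$ points of order $3$ on $\KC$ form $8$ free $\ZZ/7\ZZ$-orbits, descending to $8$ points of $B$, each of $\varphi$-ramification order $3$; over $z=1$ the $84$ points of order $2$ form $12$ free orbits descending to $12$ points of ramification order $2$. Riemann--Hurwitz for $\varphi$: $-2 = 24(-2) + 3\cdot6 + 8\cdot2 + 12\cdot1 = -48+18+16+12 = -2$, confirming $g(B)=0$ and the pattern $[7^31^3/3^8/2^{12}]$.

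For the types (4A)/(4B) and (7A)/(7B) I would \emph{not} redo the quotient-curve analysis but instead transport the (3)-covering through the quadratic transformation \eqref{eq:t32a} and the cubic transformation \eqref{eq:t32c}, which (as noted after \eqref{eq:t32c}) relate hypergeometric equations with local exponent $1/2$ at $z=1$ in the (3)-types to those in the (4)- and (7)-types. Concretely, if $\psi$ is the degree-$2$ (resp. degree-$3$) rational map of \eqref{eq:t32a} (resp. \eqref{eq:t32c}) on the $z$-line, then the Darboux covering $\varphi'$ for the new type fits into $\psi\circ\varphi' = \varphi\circ(\text{correspondence on }B)$, so $\varphi'$ is obtained from $\varphi$ by a fiber-product / normalization construction; its branching is forced by how $\psi$ branches over $0,1,\infty$. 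The quadratic map $z\mapsto -4z/(z-1)^2$ of \eqref{eq:t32a} has a double point over its value $1$ (the fiber containing $z=1$), is unramified over $0$ and $\infty$ generically, so pulling back the $[7^31^3/3^8/2^{12}]$ pattern along it merges the twelve order-$2$ points over $z=1$ pairwise through the branch point of $\psi$, turning $3^8$ into $4^6$ while leaving $7^31^3$ and $2^{12}$ essentially in place; Riemann--Hurwitz then gives $-2+2\cdot24\cdot\frac{1}{2}$-type bookkeeping landing on $g=1$. Similarly the cubic map produces $[7^31^3/7^31^3/2^{12}]$ with $g=1$. The genus-$1$ conclusions also match the Weierstra\ss{} forms \eqref{eq:darbc7}, \eqref{eq:darbc4} asserted in \S\ref{sec:ourcovs}.

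Finally I would check \emph{feasibility and uniqueness} of these patterns as the only possibilities: any Darboux covering reducing $\KG$ to $\ZZ/7\ZZ$ has degree exactly $(\#\KG)/7=24$ (argued in the introduction), factors through $\Phi_0$, and the fiber over $z=\infty$ must consist of points whose ramification orders divide $7$ and whose total equals $24$ with the order-$7$ fixed-point structure of a $\ZZ/7\ZZ$-subgroup of $\PSL(2,\FF_7)$; since all such subgroups are conjugate (they are the Sylow $7$-subgroups), the fiber over $\infty$ is forced to be $[7^31^3]$, and then the fibers over $0$ and $1$ are forced by the respective local exponents ($1/3,2/3$ or $1/4,3/4$ or $1/7,5/7$ etc.\ at $z=0$; $1/2$ or the relevant value at $z=1$) together with the requirement that $\varphi$ be \emph{unramified over the hypergeometric singular point whenever the corresponding local exponent difference is a $1/7$-multiple is impossible}—so only $2$-ramification is allowed at $z=1$ and only $3$-, $4$-, or $7$-ramification at $z=0$ according to the type. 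A short Riemann--Hurwitz count in each case then leaves exactly the three patterns listed.

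\medskip

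\emph{Main obstacle.} The delicate point is the second paragraph: verifying that the quadratic and cubic transformations \eqref{eq:t32a}, \eqref{eq:t32c} genuinely lift to \emph{correspondences between the Darboux curves} (not just the $z$-lines) in a way that identifies the reduced $\ZZ/7\ZZ$-monodromies, so that the pulled-back covering really is \emph{the} Darboux covering of the new type and not merely \emph{a} degree-$24$ covering with cyclic-looking branching. This requires knowing that the transformations of \cite{KatoTR} preserve the (projective) monodromy group—which is why the hypothesis ``finite primitive monodromy'' and the invariance Lemma are invoked—and then doing the fiber-product ramification bookkeeping carefully over the branch points of $\psi$, including checking that no unexpected ramification or component-splitting occurs (i.e.\ the fiber product stays irreducible of the right genus). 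Everything else is Riemann--Hurwitz arithmetic and the conjugacy of Sylow $7$-subgroups.
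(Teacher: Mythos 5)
Your first paragraph is sound: for the types (3A)/(3B) it reproduces, in explicit form on the Klein curve, the same argument the paper uses (the Darboux covering is the degree-$24$ quotient of the degree-$168$ Galois covering by a Sylow $7$-subgroup, the three fixed points of the order-$7$ automorphism give the $1^3$ part, and Riemann--Hurwitz confirms $g=0$). The genuine gaps are in your handling of the types (4A)/(4B) and (7A)/(7B). First, your uniqueness paragraph claims that every such Darboux covering ``factors through $\Phi_0$''; this is false for those types. The Galois covering attached to $\CC(z,f_2/f_1,f_3/f_1)$ is dictated by the denominators of the local exponent differences of the equation, so for the $4$- and $7$-types it has branching $[7^{24}/4^{42}/2^{84}]$ (genus $10$), respectively $[7^{24}/7^{24}/2^{84}]$ (genus $19$), and is not the Klein covering \eqref{eq:galois168}; in particular a $\ZZ/7\ZZ$-quotient of $\KC$ can never be the genus-$1$ Darboux curve (Riemann--Hurwitz: $4$ is not a multiple of $6$). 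Second, your transformation route mispairs the transformations with the types: the quadratic transformation \eqref{eq:t32a} links (4B) with (7B), and the cubic \eqref{eq:t32c} links (3A) with (7A); there is no direct quadratic passage from the $3$-types to the $4$-types, so the proposed bookkeeping ``pulling back $[7^31^3/3^8/2^{12}]$ along the quadratic map turns $3^8$ into $4^6$'' does not describe anything that happens. In the correct chain (the paper's \S\ref{sec:ourcovs}) one first forms the fiber product of $\Phi_3$ with the cubic map, giving \eqref{eq:cub37}: the $3^8$ fiber is absorbed by the triple point of the cubic map, the $2^{12}$ fiber survives, and two $7^31^3$ fibers appear; then $\Phi_4$ comes from \eqref{eq:qua47} after dividing by the involution $\Phi_7\mapsto 1/\Phi_7$, and its $4^6$ fiber arises from the $2^{12}$ fiber of $\Phi_7$ (doubled at the critical value of the quadratic map, then halved by the involution), not from $3^8$. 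Third, even a corrected fiber-product computation only exhibits one covering with the stated pattern, whereas the proposition asserts the pattern for every Darboux covering of the given type; you flag the related irreducibility/identification issue yourself but do not resolve it.

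The repair is to run your quotient argument uniformly on the type-dependent Galois covering instead of on $\Phi_0$ --- which is exactly the paper's proof: by the Galois correspondence, a Darboux covering reducing $\KG$ to $\ZZ/7\ZZ$ is a degree-$24$ subcovering of the degree-$168$ Galois covering, whose branching pattern is one of $[7^{24}/3^{56}/2^{84}]$, $[7^{24}/4^{42}/2^{84}]$, $[7^{24}/7^{24}/2^{84}]$ according to the type; over a fiber whose exponent-difference denominators have least common multiple $k\le 4$ every point of the subcover must branch with order exactly $k$ (the intermediate degree-$7$ map can only ramify with index $7$, coprime to $k$); over a $7$-fiber only orders $7$ and $1$ can occur, and Riemann--Hurwitz (negative genus otherwise) forces exactly three order-$7$ points. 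This yields all three patterns and genera at once, with no appeal to the transformations \eqref{eq:t32a}, \eqref{eq:t32c}, which the paper uses only afterwards to construct $\Phi_7$ and $\Phi_4$ explicitly.
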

\begin{proof} (Compare with \cite[Lemma 3.4]{ViDarb}.)
Let $E$ denote the third order hypergeometric equation.
Suppose that $f_1,f_2,f_3$ is a basis of its solutions. 
The field extension $\CC(z,f_2/f_1,f_3/f_1)\supset\CC(z)$
defines a Galois covering $\Phi_1$ of $\CC\PP^1$ of degree $\#\KG=168$.
(It is also a Darboux covering that reduces the projective monodromy to a trivial group.)
Its possible branching patterns are $[7^{24}/3^{56}/2^{84}]$, 
$[7^{24}/4^{42}/2^{84}]$ or $[7^{24}/7^{24}/2^{84}]$ (of genus $g=3,10$ or $19$, respectively);
see \cite[\S 8.2]{vdPut00}. 
By the Galois correspondence, a Darboux covering that reduces the monodromy to $\ZZ/7\ZZ$
is a subcovering of $\Phi_1$ of degree $168/7=24$.
The fibers where local exponent differences of $E$ have denominators 
with the least common multiple $k\le 4$ should have only points branching with order $k$. 
The only immediate uncertainty are those fibers with the branching $7^31^3$.
Only branching orders 7, 1 can be present there for subcoverings of $\Phi_1$. 
If there are fewer (than 3) points with the branching order 7, 
the Riemann-Hurwitz formula \cite[p.~404]{Hurwitz93} gives a negative genus for the covering.
\end{proof}
The only Belyi map (up to M\"obius 
transformations on either side of $\PP^1\to\PP^1$)
with the genus $g=0$ branching pattern was computed as such
following \cite[\S 3]{Vidunas2005}, and presented in \cite[p.~178]{ViAGH}.
We refer to this Belyi covering as $\Phi_3$. Its expression is given in (\ref{eq:phi3}).

It is less practical to compute (from scratch) Belyi functions with the two branching patterns of genus $g=1$.
Instead, we follow their algebraic correspondence relations to $\Phi_3$ that are consequences 
of quadratic and cubic transformations 
of $\hpgo32$-functions \cite{KatoTR}.
In particular, representative $\hpgo32$-functions of types {\rm (3A)} and {\rm (7A)} are related 
by cubic transformation (\ref{eq:t32c}). 
For example, consider (\ref{eq:t32c}) with $a=-1/42$, $b=1/14$.
Hence, there exists only one covering $\Phi_7:E_7\to\PP^1$ (up to holomorphic symmetries of both curves)
with the branching pattern $[7^31^3/7^31^3/2^{12}]$.
It can be constructed by considering the fiber product of $\Phi_3$ and the cubic covering $27z^2/(4-z)^3$,
as described in \cite[Lemma 3.5]{ViDarb}. This gives the Darboux curve $E_7$ isomorphic to
\begin{equation} \label{eq:darbc7}
v^2=u\,(1-11u+32u^2), 
\end{equation}
as computed here in \S \ref{sec:dc277}. On the fiber product curve $E_7$, we have
\begin{equation} \label{eq:cub37}
\Phi_3=\frac{27\,\Phi_7^{\,2}}{(4-\Phi_7)^3}.
\end{equation}
Further, representative $\hpgo32$-functions of types {\rm (4B)} and {\rm (7B)} are related 
by quadratic transformation (\ref{eq:t32a}). 
For example, consider (\ref{eq:t32a}) with $a=-1/28$, $b=1/28$. 
Hence, there is one covering $\Phi_4:E_4\to\PP^1$ (up to holomorphic symmetries of both curves)
with the branching pattern $[7^31^3/4^6/2^{12}]$. The fiber product curve is the same $E_7$,
where we have
\begin{equation} \label{eq:qua47}
\Phi_4=-\frac{4\,\Phi_7}{(\Phi_7-1)^2}.
\end{equation}
The degree 24 covering $\Phi_4$ is obtained by dividing out its symmetry $\Phi_7\mapsto 1/\Phi_7$.
Consequently, the Darboux curve $E_4$ is a genus 1 curve that is 
2-isogenous $E_7$; see \S \ref{sec:c247}. 
It is isomorphic to
\begin{equation} \label{eq:darbc4}
w^2=p\,(1+22p-7p^2).
\end{equation}

\section{The cases (3A) and (3B)}
\label{sec:c237}

The $\hpgo32$-functions in (\ref{eq:kleinhpg}) are companion solutions (up to a power factor) 
of the same Fuchsian equation with the projective monodromy $\KG$.
They are of type (3A), and directly parametrize Klein's curve $\KC$.
The degree 24 pull-back covering that reduces the projective monodromy group to $\ZZ/7\ZZ$
can be computed from the degree 168 Galois covering $\Phi_0$ in (\ref{eq:galois168}) and 
the degree 7 projection (\ref{eq:cover7}).
The degree 24 rational function is
\begin{equation} \label{eq:phi3}
\Phi_3=\frac{1728x(x-1)F_1^7}{G_0^3\,G_1^3},
\end{equation}
where
\begin{align}
F_1=& \; 1+5x-8x^2+x^3, \nonumber \\ 
G_0 = & \;1-x+x^2, \\
G_1=& \; 1-235x+1430x^2-1695x^3+270x^4+229x^5+x^6. \nonumber
\end{align}
This is a Belyi function with the branching pattern $[7^31^3/3^8/2^{12}]$.
As recounted in \S \ref{sec:ourcovs}, 
this is a unique Belyi map (up to M\"obius transformations)
with this branching pattern. 
The same Belyi function appears in the hypergeometric formula \cite[(76)]{ViAGH}
\begin{equation}
\hpg21{\frac27,\,\frac37}{\frac67}{x} = G_0^{-1/28}\,G_1^{-1/28}\,
\hpg21{\frac1{84},\,\frac{29}{84}}{\frac67}{\Phi_3}.
\end{equation}
This is not surprising, as branching requirements for pull-back transformations between
hypergeometric equations are similar \cite[\S 3]{ViAGH}.
The Belyi function $\Phi_3$ is presented 
in \cite{Hoshino10} as well.

\subsection{Evaluations of type (3A)}

The following Darboux evaluations are used in \S \ref{sec:level7} to prove 
modular identities (\ref{eq:mod7a})--(\ref{eq:mod7cc}). 
\begin{theorem} \label{eq:th237a}
We have
\begin{align} \label{eq:id3a}
\hpg32{\!-\frac1{42},\,\frac{13}{42},\,\frac{9}{14}}{\frac47,\;\frac67}{\Phi_3}
=&\; (1-x)^{1/7}\,G_0^{-1/14}\,G_1^{-1/14},\\  \label{eq:id3b}
\hpg32{\frac5{42},\,\frac{19}{42},\,\frac{11}{14}}{\frac57,\;\frac87}{\Phi_3}
=&\; (1-x)^{2/7}\,F_1^{-1}\,G_0^{5/14}\,G_1^{5/14},\\  \label{eq:id3c}
\hpg32{\frac{17}{42},\,\frac{31}{42},\,\frac{15}{14}}{\frac97,\;\frac{10}7}{\Phi_3}
=&\; (1-x)^{-3/7}\,F_1^{-3}\,G_0^{17/14}\,G_1^{17/14}
\end{align}
in a neighborhood of $x=0$.
\end{theorem}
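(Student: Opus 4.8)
The plan is to verify the three identities by checking that each right-hand side is a radical solution of the pulled-back Fuchsian equation that matches the correct local exponent at $x=0$, and then invoke uniqueness. Concretely, let $E$ denote the third order hypergeometric equation of type (3A) with the $\hpgo32$-solution on the left of (\ref{eq:id3a}), and let $E^*$ be its pull-back under the degree 24 covering $z\mapsto\Phi_3$ with the gauge factor dictated by (\ref{algtransf}). By Proposition \ref{th:brpats} the covering $\Phi_3$ reduces the projective monodromy of $E$ to $\ZZ/7\ZZ$; hence the monodromy representation of $E^*$ is completely reducible and $E^*$ has a basis of three radical solutions. The three left-hand sides of (\ref{eq:id3a})--(\ref{eq:id3c}) are companion $\hpgo32$-functions of the \emph{same} Fuchsian equation (as recalled around (\ref{eq:kleinhpg})), so after pull-back they are (up to the respective power prefactors from (\ref{algtransf})) exactly the three radical solutions of $E^*$. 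It therefore suffices to identify which radical function on $\CC\PP^1$ corresponds to each.

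The key steps, in order, are as follows. First I would compute the Riemann $P$-symbol of $E$ for the (3A)-representative $\hpg32{-1/42,\,13/42,\,9/14}{4/7,\,6/7}{z}$, reading off the local exponents at $z=0,1,\infty$ from the $M$-matrix recipe (\ref{eq:hypm}) in \S\ref{sec:hgp}; at $z=0$ the exponents are $0,\,1/7,\,-5/7$ (mod $\ZZ$ the three values $0,1/7,2/7$ appear via the $\ib_i$), which are pairwise distinct mod $\ZZ$. Second, I would pull these exponents back through $\Phi_3$: over the fiber $\Phi_3=0$ the points are $x=0$, $x=1$ (each simple, from the factors $x$ and $x-1$), the three roots of $F_1$ (each with multiplicity $7$), plus the simple point at $x=\infty$; over $\Phi_3=1$ the points branch with order $2$ (the $2^{12}$ part); over $\Phi_3=\infty$ the six roots of $G_0G_1$ branch with order $3$. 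Using the vanishing orders of the three radical solutions of $E$ at $z=0,1,\infty$ together with the branching data, one determines the divisor (with $\QQ$-coefficients) of each radical solution of $E^*$ on $\CC\PP^1$, exactly as flagged in the paragraph after Lemma \ref{th:locexps}. Third, a divisor with $\QQ$-coefficients determines a radical function up to a scalar; comparing with the proposed right-hand sides $(1-x)^{1/7}G_0^{-1/14}G_1^{-1/14}$ etc.\ shows these are the correct radical functions — the powers $1/7,-1/14,\dots$ are read directly off the exponents $1/7$ at $z=0$ (pulled back through the simple point $x=1$ in the fiber $\Phi_3=0$) and the exponent at $z=\infty$ (pulled back through the order-$3$ points of $G_0G_1$). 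Fourth, by Lemma \ref{th:locexps} each of the three local exponents of $E^*$ at the point $x=0$ singles out a unique radical solution up to scaling; since the $\hpgo32$-series on the left of (\ref{eq:id3a}) is $1+O(z)$ it is the solution with vanishing order $0$ at $x=0$, and likewise (\ref{eq:id3b}),(\ref{eq:id3c}) are pinned down by their leading behavior, so matching the normalizations at $x=0$ (both sides equal $1$, $1$, $1$ there up to the obvious constant) forces the scalars to be $1$. This establishes the identities in a neighborhood of $x=0$.

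The main obstacle is the bookkeeping in the second and third steps: one must correctly distribute the three local exponents of $E$ among the points of each $\Phi_3$-fiber, which requires knowing not just the branching orders but \emph{which} local solution branch is single-valued at each preimage — equivalently, computing the divisor of each of the three radical solutions of $E^*$ and checking it has integer-shifted consistency (is $\QQ$-linearly equivalent to the divisor forced by the prefactor $\theta$). A clean way to sidestep subtle sign/branch ambiguities is to verify the claim by a direct series computation: expand both sides of each identity as Puiseux series in $x$ around $x=0$ (using $\Phi_3=1728\,x\,(1+O(x))$, so $\Phi_3$ is a local uniformizer up to a constant) to enough order, and observe agreement of sufficiently many coefficients; since both sides are known a priori to be radical solutions of the same third order equation $E^*$ with the same leading exponent, agreement of the first three coefficients already forces equality. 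I would present the structural argument (Lemma \ref{th:locexps} plus divisor matching) as the proof and note the series check as the computational confirmation.
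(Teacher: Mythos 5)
Your argument is essentially the paper's own proof: pull back through $\Phi_3$, use the reduction to cyclic ($\ZZ/7\ZZ$) monodromy to get a basis of radical solutions, constrain their $\QQ$-divisors from the branching data and local exponents, single out each solution by its exponent at $x=0$ via Lemma~\ref{th:locexps}, and resolve the residual ambiguity of which exponent sits at $x=1$ versus $x=\infty$ (the paper's $\psi_1$ versus $\psi^o_1$) by a short series comparison at $x=0$ together with the normalization to $1$ there --- exactly the mechanism used in the paper. One minor slip: the local exponents of the type (3A) equation at $z=0$ are $0$, $1-\ib_1=3/7$, $1-\ib_2=1/7$, i.e.\ $\{0,1/7,3/7\}$ rather than $\{0,1/7,2/7\}$; this does not change the structure of your argument, and the divisor bookkeeping and series check you describe would correct it.
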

\begin{proof}
A hypergeometric differential equation has the following basis of local solutions around $z=0$:
\begin{equation} \label{eq:locs3}  \hspace{-1pt}
\hpg32{\!-\frac1{42},\,\frac{13}{42},\,\frac{9}{14}}{\frac47,\;\frac67}{z}\!,\,
z^{1/7}\hpg32{\!\frac5{42},\,\frac{19}{42},\,\frac{11}{14}}{\frac57,\;\frac87}{z}\!,\, 
z^{3/7}\hpg32{\!\frac{17}{42},\,\frac{31}{42},\,\frac{15}{14}}{\frac97,\;\frac{10}7}{z}\!.
\hspace{-2pt}
\end{equation}
After the pullback $z=\Phi_3(x)$, the local exponents at $x=0$ will be $0,1/7,3/7$
as well. The generalized Riemann's $P$-symbol
\begin{equation}
P\left\{ \begin{array}{cccccc|c}
x=0 & x=1 & x=\infty & F_1(x)\!=\!0 & G_0(x)\!=\!0 & G_1(x)\!=\!0 \\ \hline
0 & 0 & 0 & 0 & -1/14 & -1/14 \\ 
1/7 & 1/7 & 1/7 & 1 & 13/14 & 13/14 & x \\
3/7 & 3/7 & 3/7 & 3 & 27/14 & 27/14 \\
\end{array}  \right\}
\end{equation}
describes all singularities of the pulled-back equation, and the local exponents at them.
The pulled-back equation has a cyclic monodromy,
as we established that (projectively) the function field $\CC(\Phi_3)$ corresponds to $\ZZ/7\ZZ$
by the Galois correspondence in the projective parametrization (\ref{eq:kleinhpg})
of Klein's curve $\KC$ by hypergeometric solutions.
Hence there is a basis of radical solutions.
Candidates for radical solutions are constructed by picking up a local exponent
at each singular point, so that their sum $s$ is either zero or a negative integer. 
If $s<0$, then a presumed solution vanishes (with order 1 or 2)
at some regular points, making the total sum of local exponents at all points equal to zero.
Negative exponents come only from the roots of $G_0G_1$.
We have to pick up the exponent $-1/14$ at those 8 roots,
because a positive exponent in that locus would lead to $s>0$. 
Up to scalar multiplication, the candidate radical solutions for (\ref{eq:id3a}) 
with the local exponent $0$ at $x=0$:
\begin{align}
\psi_1=(1-x)^{1/7}\,G_0^{-1/14}\,G_1^{-1/14}, \qquad
\psi^o_1=(1-x)^{3/7}\,G_0^{-1/14}\,G_1^{-1/14}.
\end{align}
The local exponents at $x=\infty$ equal to $3/7,1/7$, respectively. 
By checking a few initial terms of their 
power series at $x=0$ we see that $\psi_1$ is the actual radical solution.
The candidate solutions with the local exponent $1/7$ at $x=0$ are:
\begin{align}
\psi_2=x^{1/7}(1-x)^{3/7}\,G_0^{-1/14}\,G_1^{-1/14},  \qquad
\psi^o_2=x^{1/7}\,G_0^{-1/14}\,G_1^{-1/14}.
\end{align}
The actual radical solution is $\psi_2$, with the local exponent at $x=\infty$ 
differing from $3/7$ by Lemma \ref{th:locexps}.
This gives formula (\ref{eq:id3b}).
Similarly, the candidate solutions with the local exponent $3/7$ at $x=0$ are:
\begin{align}
\psi_3=x^{3/7}\,G_0^{-1/14}\,G_1^{-1/14}, \qquad
\psi^o_3=x^{3/7}(1-x)^{1/7}\,G_0^{-1/14}\,G_1^{-1/14},
\end{align}
The correct radical solution is $\psi_3$, leading to formula (\ref{eq:id3c}).
\end{proof}

Contiguous relations of $\hpgo32$ functions can be used to derive Darboux evaluations
for other hypergeometric functions in the (3A) class. For example,
applying (\ref{eq:contig11}) to (\ref{eq:id3c}) gives
\begin{equation*}
\hpg32{\!\frac1{14},\,\frac{17}{42},\,\frac{31}{42}}{\frac37,\;\frac97}{\Phi_3\!}
= \frac{(1\!-\!x)^{4/7} G_0^{3/14} G_1^{3/14}}{F_1^{2}} \textstyle
\big(1-\frac{52}9x+\frac{43}3x^2-\frac{16}3x^3+\frac19x^4 \big).
\end{equation*}
This hypergeometric solution vanishes at 4 regular points of its Fuchsian equation,
namely at the roots of the degree 4 polynomial.

The other three companion solutions to (\ref{eq:locs3}) are
\begin{align} \label{eq:locs3i} \hspace{-1pt}
& z^{1/42}\,\hpg32{\!-\frac1{42},\,\frac{5}{42},\,\frac{17}{14}}{\frac13,\;\frac23}{\,\frac1z\,}\!, \quad 
z^{-13/42}\,\hpg32{\frac{13}{42},\,\frac{19}{42},\,\frac{31}{14}}{\frac23,\;\frac43}{\,\frac1z\,}\!, \nonumber \\
& z^{-9/14}\,\hpg32{\frac{9}{14},\,\frac{11}{14},\,\frac{15}{14}}{\frac43,\;\frac53}{\,\frac1z\,}\!.
\hspace{-20pt}
\end{align}
The rational argument in their Darboux evaluations can be $1/\Phi_3$, but the evaluation then holds
in a neighborhood of a root of $G_0G_1=0$. The roots of $G_0$ can be moved to $x=0$, $x=\infty$
by the inverse of the M\"obius transformation
\begin{equation} \label{eq:mobw}
x\mapsto\mu(x)=\frac{x+\omega+1}{\omega(1-x)},
\end{equation}
where $\omega=\exp(2\pi i/3)$.
The point $x=0$ is then a regular point (up to projective equivalence) after a pull-back transformation.
Accordingly, the Darboux evaluations in the next theorem 
express $\hpgo32$-functions as linear combinations of radical solutions. 
They are reminiscent to dihedral evaluations (\ref{eq:dihe1})--(\ref{eq:dihe3}).
\begin{theorem} \label{th:phi33}
The rational function $\Phi_3^*(x)=1/\Phi_3(\mu(x))$ has the expression
\begin{align}
\Phi_3^*= & \; \frac{(24\omega+8)\,x^3\,G_2^3}{(1-x^3)\,F_2^7} 
\end{align}
with
\begin{align}
F_2=& \; 1+\frac{39\omega-16}{49}\,x^3, \\ 
G_2=& \; 1-\frac{435\omega+745}{392}\,x^3+\frac{18357\omega+14632}{16807}\,x^6. \notag
\end{align}
The following formulas hold around $x=0$:
\begin{align} \label{eq:locs1i}
F_2^{1/6}\,
\hpg32{\!-\frac1{42},\,\frac{5}{42},\,\frac{17}{42}}{\frac13,\;\frac23}{\Phi^*_3} 
\! = \hspace{-42pt} \\
& \, \frac13\,(1-x)^{-1/42}\,(1-\omega x)^{5/42}\,(1-\omega^2x)^{17/42} \nonumber \\
 & + \frac13\,(1-x)^{5/42}(1-\omega x)^{17/42}(1-\omega^2x)^{-1/42} \nonumber \\
& + \frac13\,(1-x)^{17/42}(1-\omega x)^{-1/42}(1-\omega^2x)^{5/42},
 \nonumber  \displaybreak  \\[1pt]
 \frac{x\,G_2}{1-2\omega}\,F_2^{-13/6}\,
\hpg32{\frac{13}{42},\,\frac{19}{42},\,\frac{31}{42}}{\frac23,\;\frac43}{\Phi^*_3}
\! = \hspace{-42pt} \\
& \,  \frac13\,(1-x)^{13/42}\,(1-\omega x)^{19/42}\,(1-\omega^2x)^{31/42} \nonumber \\
 & +\frac{\omega}3\,(1-x)^{19/42}(1-\omega x)^{31/42}(1-\omega^2x)^{13/42} \nonumber \\
& +\frac{\omega^2}3\,(1-x)^{31/42}(1-\omega x)^{13/42}(1-\omega^2x)^{19/42},
 \nonumber\\[1pt]
 \frac{3x^2\,G_2^2}{21+7\omega}\,F_2^{-9/2}\,
\hpg32{\frac{9}{14},\,\frac{11}{14},\,\frac{15}{14}}{\frac43,\;\frac53}{\Phi^*_3}
\! = \hspace{-42pt} \\
& \,  \frac13\,(1-x)^{9/14}\,(1-\omega x)^{11/14}\,(1-\omega^2x)^{15/14} \nonumber \\
 & +\frac{\omega^2}3\,(1-x)^{11/14}(1-\omega x)^{15/14}(1-\omega^2x)^{9/14} \nonumber \\
& +\frac{\omega}3\,(1-x)^{15/14}(1-\omega x)^{9/14}(1-\omega^2x)^{11/14}. \nonumber
\end{align}
\end{theorem}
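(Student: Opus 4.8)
The plan is to exploit a $\ZZ/3$-symmetry of the genus-$0$ Darboux covering $\Phi_3$ of \S\ref{sec:c237}. Since $\Phi_3$ factors as the composite $\KC/(\ZZ/7)\to\KC/F_{21}\to\CC\PP^1$ of the $\ZZ/3$-quotient followed by the degree-$8$ quotient, $F_{21}$ being the Frobenius group of order $21$ that normalises $\ZZ/7$ in $\KG$, there is an order-$3$ M\"obius transformation $\sigma$ of the $x$-line with $\Phi_3\circ\sigma=\Phi_3$. A fixed point of $\sigma$ lies in a fibre of $\Phi_3$ whose local degree is divisible by $3$, which by the branching pattern $[7^31^3/3^8/2^{12}]$ forces the two fixed points to be the two roots of $G_0$ (the only points of the fibre $\Phi_3=\infty$ with local degree exactly $3$). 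The M\"obius map $\mu$ of (\ref{eq:mobw}) is chosen so that it carries these two roots to $x=0$ and $x=\infty$ and conjugates $\sigma$ to $x\mapsto\omega x$; hence $\Phi_3\circ\mu$, and therefore $\Phi_3^*=1/\Phi_3(\mu(x))$, is a rational function of $x^3$. Combined with the branching pattern (poles of order $7$ over $\Phi_3^*=\infty$ away from $1-x^3=0$, zeros of order $3$ over $\Phi_3^*=0$), this pins the shape $\Phi_3^*=c\,x^3 G_2^3/\big((1-x^3)F_2^7\big)$ with $F_2$ linear and $G_2$ quadratic in $x^3$; the constant $c=24\omega+8$ and the coefficients of $F_2,G_2$ then follow by matching a few Taylor coefficients of $1/\Phi_3(\mu(x))$ at $x=0$ (equivalently, by direct substitution of $\mu$ into $\Phi_3$ and simplification). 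This computation is the main obstacle.

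Next I would pull back the $(3A)$-equation $E$ whose companion solutions at $z=\infty$ are (\ref{eq:locs3i}) along $z\mapsto 1/\Phi_3^*(x)=\Phi_3(\mu(x))$. Because $\Phi_3^*$ is the Darboux covering $\Phi_3$ up to the M\"obius change $\mu$ and an inversion, the pulled-back equation reduces the projective monodromy to $\ZZ/7$; its differential Galois group is cyclic (isomorphic to $\ZZ/14$, the preimage of $\ZZ/7\subset\KG$ in $\KG\times\ZZ/2\cong{\rm ST24}$), so there is a basis of three radical solutions. Absorbing into the pull-back the power factors $z^{1/42}$, $z^{-13/42}$, $z^{-9/14}$ of the three companion solutions produces exactly the prefactors $F_2^{1/6}$, $\tfrac{xG_2}{1-2\omega}F_2^{-13/6}$, $\tfrac{3x^2G_2^2}{21+7\omega}F_2^{-9/2}$ in the theorem, and makes $x=0$ a regular (apparent) point; reading off the generalised $P$-symbol from the ramification of $\Phi_3^*$ and the exponents $\{0,\tfrac17,\tfrac37\}$, $\{0,1,-\tfrac12\}$, $\{-\tfrac1{42},\tfrac{13}{42},\tfrac{27}{42}\}$ of $E$ at $0,1,\infty$, the exponents of the pulled-back equation at the three roots $1,\omega^2,\omega$ of $1-x^3$ are precisely the three upper parameters of the relevant $\hpgo32$, which are pairwise distinct modulo $\ZZ$. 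By Lemma \ref{th:locexps} there is a unique radical solution with each of those vanishing orders at $x=1$, and the divisor recipe of the proof of Theorem \ref{eq:th237a} identifies the three radical solutions as the products $(1-x)^{a_1}(1-\omega x)^{a_2}(1-\omega^2 x)^{a_3}$ over the three cyclic shifts $(a_1,a_2,a_3)$ of that upper-parameter triple; in particular $x\mapsto\omega x$ permutes them cyclically.

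Finally I would identify the correct linear combination by the $\ZZ/3$-eigenspace decomposition of the (three-dimensional) solution space. Each $\hpgo32$-series in the theorem, together with its prefactor, is a solution of the corresponding pulled-back equation that is visibly an eigenvector of $x\mapsto\omega x$, with eigenvalue $1,\omega,\omega^2$ in the three cases respectively, read off from the powers $x^0,x^1,x^2$ in the prefactors. Since the cyclic permutation of the three radical solutions $T_1,T_2,T_3$ has the distinct eigenvalues $1,\omega,\omega^2$, each eigenspace is one-dimensional, spanned by $T_1+T_2+T_3$, $T_1+\omega T_2+\omega^2 T_3$, and $T_1+\omega^2 T_2+\omega T_3$ respectively (with $T_1,T_2,T_3$ ordered as in the theorem). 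Hence each identity holds up to one overall constant, which I fix by a single Taylor coefficient at $x=0$: the value for (\ref{eq:locs1i}), where both sides equal $1$; and the coefficient of $x$ for the other two, where both sides vanish at $x=0$ because $1+\omega+\omega^2=0$, and where the constant collapses to $1/3$ after using $\omega^3=1$ (for instance, the second identity comes down to $(3+2\omega)(1-2\omega)=7$). Once $\Phi_3^*$ is in hand and the power-factor normalisation is carried out carefully, everything else is forced by this cyclic-monodromy structure and the cube-root-of-unity eigenvalue count.
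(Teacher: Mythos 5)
Your argument is correct in substance and shares the paper's skeleton: the radical basis of the pulled-back equation is obtained from the solutions $\psi_1,\psi_2,\psi_3$ in the proof of Theorem \ref{eq:th237a} transported through the M\"obius map (\ref{eq:mobw}), and each companion solution in (\ref{eq:locs3i}) with $z=1/\Phi_3^*$ is expanded in that basis. Where you genuinely differ is in how the expansion coefficients are found. The paper matches the first few power-series terms at $x=0$ and reads off all coefficients directly; you instead use the substitution $x\mapsto\omega x$, which fixes $\Phi_3^*$, $F_2$, $G_2$, permutes the three radical products cyclically, and multiplies the three left-hand sides by $1,\omega,\omega^2$ respectively, so that the one-dimensionality of the eigenspaces forces each identity up to a single constant, fixed by one Taylor coefficient (your verification $(3+2\omega)(1-2\omega)=7$ is exactly the right check for the second identity). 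This is a tidier, more explanatory route to the coefficients $\frac13(1,1,1)$, $\frac13(1,\omega,\omega^2)$, $\frac13(1,\omega^2,\omega)$ than term-by-term matching, and the eigenvalue argument is sound: the left-hand sides are honest eigenvectors, and the cyclic permutation of the products holds exactly for the principal branches near $x=0$.

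A few details need tightening. (1) In the fibre $\Phi_3=\infty$ all eight roots of $G_0G_1$ have local degree exactly $3$, so that property does not single out the roots of $G_0$ as the fixed points of the order-three deck transformation; one needs either the $\QQ$-rationality of the fixed-point pair together with irreducibility of $G_1$, or simply the direct computation of $\Phi_3(\mu(x))$, which you offer as a fallback and which is what actually establishes the displayed formula for $\Phi_3^*$. (2) The divisor constraints plus the $\ZZ/3$-symmetry only show that the actual radical solutions form one of the two orbits of products under $x\mapsto\omega x$, namely the cyclic shifts of $\bigl(-\frac1{42},\frac5{42},\frac{17}{42}\bigr)$ or of the reversed triple; selecting the correct orbit requires analytic input, most easily the explicit transported solutions $\psi_i(\mu(x))$ as in the paper (note that both orbit combinations with eigenvalue $1$ are power series in $x^3$ agreeing at orders $x^0,x^1,x^2$, so a very low-order check does not discriminate). (3) Absorbing the factors $z^{1/42}$, $z^{-13/42}$, $z^{-9/14}$ produces, besides the stated $F_2$-powers, also powers of $x$, $G_2$, $1-x^3$ and constants, which must be split between the two sides; this looseness does not harm the eigenvalue argument, since the extra gauge factor is multiplied by a constant under $x\mapsto\omega x$. (4) For the third identity both sides vanish to second order at $x=0$ (the coefficient of $x$ of the right-hand combination is proportional to $1+\omega+\omega^2=0$), so the normalizing constant must be read from the coefficient of $x^2$, not of $x$.
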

\begin{proof}
Up to constant multiples, 
substitution (\ref{eq:mobw}) transforms the solutions $\psi_1$, $\psi_2$, $\psi_3$ 
of the previous proof to
\begin{align}
\psi^*_1 & =(1-x)^{3/7}\,(1-\omega^2x)^{1/7}\,x^{-1/14}\,G_2^{-1/14}, \notag \\
\psi^*_2 & =(1-\omega x)^{1/7}\,(1-\omega^2x)^{3/7}\,x^{-1/14}\,G_2^{-1/14},\\
\psi^*_3 & =(1-x)^{1/7}\,(1-\omega x)^{3/7}\,x^{-1/14}\,G_2^{-1/14}. \notag
\end{align}
By considering the first few terms of Poisson power series in $x$,
we express each local solution in (\ref{eq:locs3i}) with $z=1/\Phi^*_3$ as linear combinations
of $\psi^*_1$, $\psi^*_2$, $\psi^*_3$. The claimed hypergeometric identities are then obtained
after simplifying the powers of $x,G_2$ and bringing the linear factors 
of $1-x^3=(1-x)(1-\omega x)(1-\omega^2x)$ to the right-hand side.
\end{proof}
Evidently, $\Phi^*_3(x)$ is a compositions of a degree 8 covering with the cyclic $x\mapsto x^3$ covering.
The M\"obius-equivalent function $\Phi_3(x)$ is a composition of degree 8 and 3 coverings as well.
This is noted in 
\cite[\S 9]{ViAGH}, \cite[\S 4]{Hoshino10}.

\subsection{Evaluations of type (3B)}

The simplest representative evaluations of this type involve polynomial
parts vanishing at some regular points of the hypergeometric equation.
\begin{theorem} \label{eq:th237b}
We have
\begin{align} \label{eq:locs1b}
\hpg32{\!-\frac1{14},\,\frac{11}{42},\,\frac{25}{42}}{\frac47,\;\frac57}{\Phi_3}
=&\; (1-3x)\,(1-x)^{3/7}\,G_0^{-3/14}\,G_1^{-3/14},\\
\hpg32{\frac3{14},\,\frac{23}{42},\,\frac{37}{42}}{\frac67,\;\frac97}{\Phi_3}
=&\; \left(1-\frac{2x}3\right) (1-x)^{-2/7}\,F_1^{-2}\,G_0^{9/14}\,G_1^{9/14},\\
\hpg32{\frac{5}{14},\,\frac{29}{42},\,\frac{43}{42}}{\frac87,\;\frac{10}7}{\Phi_3}
=&\; \left(1+\frac{x}2\right) (1-x)^{-1/7}\,F_1^{-3}\,G_0^{15/14}\,G_1^{15/14}
\end{align}
in a neighborhood of $x=0$. 
\end{theorem}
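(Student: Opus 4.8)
The plan is to reuse, essentially verbatim, the argument proving Theorem~\ref{eq:th237a}, now with the companion basis appearing in the present statement. The first observation is that the three $\hpgo32$-functions on the left-hand sides are, after multiplication by the power factors $1,\,z^{2/7},\,z^{3/7}$, a basis of local solutions at $z=0$ of one third order hypergeometric equation $E$ of type~(3B), namely the one with parameters $\ia=(-\tfrac1{14},\tfrac{11}{42},\tfrac{25}{42})$, $\ib=(\tfrac47,\tfrac57)$; indeed its matrix $M$ in (\ref{eq:hypm}) is the transpose of the first matrix in (\ref{eq:3f2mat}). Its local exponents are $0,\tfrac27,\tfrac37$ at $z=0$, they are $-\tfrac1{14},\tfrac{11}{42},\tfrac{25}{42}$ at $z=\infty$, and $0,1,-\tfrac12$ at $z=1$. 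By Corollary~\ref{th:darb3s} the degree~$24$ covering $\Phi_3$ reduces the projective monodromy of $E$ to $\ZZ/7\ZZ$, so the pulled-back equation $E^{*}$ obtained under $z\mapsto\Phi_3(x)$ has completely reducible monodromy and therefore a basis of three radical solutions.

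Next I would write down the generalized Riemann's $P$-symbol of $E^{*}$. The points $x=0,1,\infty$ together with the three roots of $F_1$ form the fibre of $\Phi_3$ over $z=0$ with branching orders $1,1,1,7,7,7$; the roots of $G_0$ and of $G_1$ form the fibre over $z=\infty$, each with branching order $3$; and the twelve points over $z=1$ acquire the integral exponents $0,2,-1$, hence become ordinary. So the singularities of $E^{*}$ and their local exponents are
\[
0,\ \tfrac27,\ \tfrac37 \ \ (\text{at }x=0,1,\infty);\qquad 0,\ 2,\ 3 \ \ (\text{at }F_1=0);\qquad -\tfrac3{14},\ \tfrac{11}{14},\ \tfrac{25}{14} \ \ (\text{at }G_0=0,\ G_1=0),
\]
and one checks the Fuchs relation. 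A radical solution that is a product of powers of $x,\,x-1,\,F_1,\,G_0,\,G_1$ and a polynomial must take one of the listed exponents at each singular fibre; writing $s$ for the sum of the chosen exponents over all singular points, $s$ is a non-positive integer and $-s$ counts the extra simple zeros among the ordinary points over $z=1$. Since negative exponents occur only at the roots of $G_0,G_1$, any admissible choice must take $-3/14$ there, which forces $s=-1$; the exponents at $x=0,1,\infty$ then form a permutation of $0,\tfrac27,\tfrac37$ and the solution has exactly one simple zero over $z=1$. Hence every radical solution of $E^{*}$ has the shape $c\,x^{e_0}(x-1)^{e_1}G_0^{-3/14}G_1^{-3/14}(1+\lambda x)$, with $e_\infty$ determined by the total degree.

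By Lemma~\ref{th:locexps} the three incongruent exponents $0,\tfrac27,\tfrac37$ at $x=0$ single out the three basis radical solutions $\psi_1,\psi_2,\psi_3$, with $e_0=0,\tfrac27,\tfrac37$ respectively. For each there remain only two candidates --- the two ways of distributing the remaining two exponents between $x=1$ and $x=\infty$ --- and the correct one, together with the unknown $\lambda$, is pinned down by expanding both sides as power series in $x$ at $x=0$ (using $\Phi_3(x)=-1728x+O(x^2)$ and the $\hpgo32$-series) and comparing the first coefficients; this yields $\lambda=-3,\,-\tfrac23,\,\tfrac12$. Finally, since the local solution at $z=0$ attached to $\psi_i$ is $z^{e_0}$ times a $\hpgo32$-series, I would divide $\psi_i$ by $\Phi_3^{e_0}=\bigl(1728\,x(x-1)F_1^{7}/(G_0^{3}G_1^{3})\bigr)^{e_0}$ and simplify the powers of $x,\,x-1,\,F_1,\,G_0,\,G_1$, absorbing the factor $(-1)^{e_0}$ into the normalization that makes the $\hpgo32$-series begin with $1$; this produces precisely the three claimed identities, and in particular the factors $F_1^{-2},F_1^{-3}$ and $G_0^{9/14},G_1^{9/14},G_0^{15/14},G_1^{15/14}$ emerge from $\Phi_3^{-2/7}$ and $\Phi_3^{-3/7}$.

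The computations are routine; the only real care is needed in that last step --- keeping straight how the power factor $\Phi_3^{e_0}$ turns the $P$-symbol-adapted solution $\psi_i$ (whose only denominator is $G_0^{-3/14}G_1^{-3/14}$) into the asserted form --- and in checking a posteriori that the computed value of $\lambda$ indeed places the extra simple zero at an ordinary point of $E^{*}$ over $z=1$, which is guaranteed in advance by the existence of the radical basis but is worth confirming. As with Theorem~\ref{eq:th237a}, the presence of the linear polynomials $1-3x,\,1-\tfrac23x,\,1+\tfrac12x$ merely reflects that each (3B)-solution vanishes at one ordinary point of its Fuchsian equation.
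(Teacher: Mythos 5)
Your route is essentially the paper's: pull back the companion basis (\ref{eq:locs3b}) along $\Phi_3$, write the generalized $P$-symbol, list candidate radical solutions of the shape $x^{e_0}(x-1)^{e_1}G_0^{-3/14}G_1^{-3/14}(1+\lambda x)$ with $\lambda$ undetermined, and identify the right candidate (and $\lambda$) by comparing power series at $x=0$; that core is sound and yields the stated identities after dividing by $\Phi_3^{e_0}$. But two of your structural assertions are wrong. The more serious one: the extra simple zero does \emph{not} lie in the fiber over $z=1$. The zeros of the linear factors are $x=\tfrac13,\ \tfrac32,\ -2$, and for instance $\Phi_3(\tfrac13)=-\tfrac{10368}{2401}\neq 1$; so your proposed a posteriori check that ``the computed $\lambda$ places the zero at an ordinary point over $z=1$,'' which you claim is ``guaranteed in advance by the existence of the radical basis,'' would simply fail. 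Nothing in the radical-basis argument confines zeros of solutions to the fiber $\Phi_3=1$: they may sit at arbitrary ordinary points (compare the degree-4 polynomial in the contiguous example after Theorem \ref{eq:th237a}), and a priori even at roots of $G_0G_1$, where the local exponent would then be $11/14$. For the same reason your statement that the exponent $-3/14$ is ``forced'' at all eight roots of $G_0G_1$, so that $s=-1$, is not a valid deduction: taking $11/14$ at one root and $-3/14$ at the rest, with exponents $0,\tfrac27,\tfrac37$ at $x=0,1,\infty$, balances to $s=0$ and must be ruled out (or, as in the paper, simply kept in play by leaving the linear factor's root completely free -- which your free $\lambda$ happens to do, so the method survives despite the faulty justification).

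A smaller slip: the third local exponent at $z=1$ is $\ib_1+\ib_2-\ia_1-\ia_2-\ia_3=+\tfrac12$, not $-\tfrac12$ (the $\gamma$ in the paper's $P$-symbol carries a sign inconsistency), so the exponents at the twelve points above $z=1$ become $0,1,2$; with your values $0,2,-1$ the phrase ``hence become ordinary'' would not follow. Neither error changes the final formulas, since the series comparison with unknown $\lambda$ still singles out the correct radical solutions exactly as in the paper's proof, but as written your argument contains one invalid forcing step and one confirmation step that would fail if actually carried out.
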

\begin{proof}
A hypergeometric differential equation has the following basis of local solutions around $z=0$:
\begin{equation} \label{eq:locs3b} \hspace{-2pt}
\hpg32{\!-\frac1{14},\,\frac{11}{42},\,\frac{25}{42}}{\frac47,\;\frac57}{z}\!,\, 
z^{2/7}\hpg32{\!\frac3{14},\,\frac{23}{42},\,\frac{37}{42}}{\frac67,\;\frac97}{z}\!,\,
z^{3/7}\hpg32{\!\frac{5}{14},\,\frac{29}{42},\,\frac{43}{42}}{\frac87,\;\frac{10}7}{z}\!.
\hspace{-3pt}
\end{equation}
The pullback with respect to $z=\Phi_3(x)$ gives a Fuchsian equation with 
the generalized Riemann's $P$-symbol
\begin{equation}
P\left\{ \begin{array}{cccccc|c}
x=0 & x=1 & x=\infty & F_1(x)\!=\!0 & G_0(x)\!=\!0 & G_1(x)\!=\!0 \\ \hline
0 & 0 & 0 & 0 & -3/14 & -3/14 \\ 
2/7 & 2/7 & 2/7 & 2 & 11/14 & 11/14 & x \\
3/7 & 3/7 & 3/7 & 3 & 25/14 & 25/14 \\
\end{array}  \right\}.
\end{equation}
Candidate radical solutions with the local exponent 0 at $x=0$ are:
\begin{align}
\psi_4 & =(1-c_1x)\,(1-x)^{2/7}\,G_0^{-3/14}\,G_1^{-3/14}, \\
\psi^\star_4 & =(1-c_2x)\,(1-x)^{3/7}\,G_0^{-3/14}\,G_1^{-3/14}, \notag
\end{align}
where the coefficients $c_1,c_2\in\CC\setminus\{0,1\}$ are undetermined yet.
A priori, $c_1$ or $c_2$ could be a root of $G_0G_1=0$, 
so that the local exponent at that root equals $11/14$.
The correct solution is $\psi^\star_4$ with $c=3$, leading to (\ref{eq:locs1b}).
The other two identities are proved similarly.
\end{proof}

Application of M\"obius transformation (\ref{eq:mobw})
gives identities for
\begin{align}
& \sqrt{F_2}\;
\hpg32{\!-\frac1{14},\,\frac{3}{14},\,\frac{5}{14}}{\frac13,\;\frac23}{\Phi^*_3}, \quad
\frac{6x\,G_2}7\,F_2^{-11/6}\,
\hpg32{\frac{11}{42},\,\frac{23}{42},\,\frac{29}{42}}{\frac23,\;\frac43}{\Phi^*_3},\quad\nonumber\\
& \frac{3x^2G_2^2}{14\omega\!-\!7}\,
F_2^{-25/6} \; \hpg32{\frac{25}{42},\,\frac{37}{42},\,\frac{43}{42}}{\frac43,\;\frac53}{\Phi^*_3},
\end{align}
similar to those in Theorem \ref{th:phi33}.

\section{Modular relations}
\label{sec:modular}

The formulas of Theorem \ref{eq:th237a} can be used
to prove the modular product identities for 
$K_1(\tau),K_2(\tau),K_3(\tau)$ in (\ref{eq:mod7a})--(\ref{eq:mod7cc}).
This is demonstrated in  \S \ref{sec:level7}. Recalling Dedekind's eta-function \cite[p.~29]{Zagier08}
\begin{equation} \label{eq:defeta}
\eta(\tau) = q^{1/24}\,\prod_{n=1}^{\infty} (1-q^n)=\sum_{n=-\infty}^{\infty} q^{\frac1{24}{(6n+1)^2}},
\end{equation}
the functions $\eta(\tau)^4\,K_1(\tau)$, $\eta(\tau)^4\,K_2(\tau)$, $\eta(\tau)^4\,K_3(\tau)$
are known to be modular forms of weight 2 on the modular curve $\XX(7)$;
see \cite[Example 29]{FrancMason}. 
Therefore we refer to the formulas in  (\ref{eq:mod7a})--(\ref{eq:mod7cc})
as {\em level $7$ evaluations}.

As a warm-up, we prove in \S \ref{sec:rrq} hypergeometric expressions (\ref{eq:rr1a}), (\ref{eq:rr2a})
for the (slightly modified) Rogers-Ramanujan series. 
Because of ubiquitous relation \cite{Duke} to the modular curve $\XX(5)$,
we refer to those formulas as {\em level $5$ evaluations}.
Multiplied by $\eta(\tau)^{2/5}$, the functions in  (\ref{eq:rr1a})--(\ref{eq:rr2b})
are considered in \cite{Kaneko06} as modular forms of weight $1/5$ on $\XX(5)$.
In \S \ref{sec:level234} we show a few more similar modular evaluations of $\hpgo21$-functions.

For an introduction to modular curves and functions, we refer to \cite{DiamondS}. 
For a positive integer $N$, 
the modular curves $\XX(N)$, $\XX_1(N)$, $\XX_0(N)$ are defined by the congruence subgroups
$\Gamma(N)$, $\Gamma_1(N)$, $\Gamma_0(N)$, respectively, of $\mbox{\rm SL}(2,\ZZ)$.
If a moduli curve has genus 0, then a generator of its field of moduli functions
is called a {\em Hauptmodul}. For example, the $j$-invariant (\ref{eq:jinv}) is a Hauptmodul of $\XX(1)$,
and
\begin{align} 
\label{eq:h2} h_2(\tau):= & \, \frac{\eta(\tau)^{24}}{\eta(2\tau)^{24}}
= \frac1{q}\,\prod_{n=1}^{\infty} \big(1+q^n\big)^{-24}, \\
\label{eq:h3} h_3(\tau):= & \, \frac{\eta(\tau)^{12}}{\eta(3\tau)^{12}}
=\frac1{q}\,\prod_{n=0}^{\infty} \big(1-q^{3n+1}\big)^{\!12} \, \big(1-q^{3n+2}\big)^{\!12}, \\
\label{eq:h4} h_4(\tau):= & \; \frac{\eta(\tau)^{8}}{\eta(4\tau)^{8}} \;
= \frac1{q}\,\prod_{n=1}^{\infty} \big(1+q^{2n-1}\big)^{-8} \,\big(1+q^{2n}\big)^{-16}, \\
\label{eq:h5} h_5(\tau) := & \; \frac{\eta(\tau)^6}{\eta(5\tau)^6} \;
=\frac1q\,\prod_{n=1}^{\infty} \frac{(1-q^n)^6}{(1-q^{5n})^6}, \\
\label{eq:h7} h_7(\tau) := & \; \frac{\eta(\tau)^4}{\eta(7\tau)^4} \;
=\frac1q\,\prod_{n=1}^{\infty} \frac{(1-q^n)^4}{(1-q^{7n})^4}
\end{align}
are Hauptmoduln of $\XX_0(N)$ for $N=2,3,4,5,7$, respectively \cite[Table 1]{Maier07}.
The genus of modular curves and branching patterns of maps between them 
can be easily derived the tables of Cummins and Pauli \cite{CumminsP}.

Existence of Fuchsian equations for modular forms with differentiation with respect to a modular function
is well-known \cite[\S 5.4]{Zagier08}. They are commonly known as {\em Picard-Fuchs equations}. 

\subsection{A proof of the level 5 evaluations} 
\label{sec:rrq}

This proof of formulas (\ref{eq:rr1a}), (\ref{eq:rr2a}) 
follows the argument in \cite{stackex}, though we seek to prepare a contextual template
for the proof in \S \ref{sec:level7}. We start with presenting Darboux evaluations \cite[\S 2.3]{ViDarb} 
for the involved hypergeometric functions .

The hypergeometric functions in (\ref{eq:rr1a}), (\ref{eq:rr2a}) 
are standard $\hpgo21$-functions with 
the icosahedral projective monodromy group \cite{ViDarb}.
The projective monodromy group can be reduced to $\ZZ/5\ZZ$ 
by a pull-back transformation with respect to this degree 12 covering:
\begin{equation} \label{eq:pback12}
\varphi_5(x)=\frac{1728\,x\,(1-11x-x^2)^5}{(1+228x+494x^2-228x^3+x^4)^3}.
\end{equation}
This differs from \cite[(2.9)]{ViDarb} by the transformations $x\mapsto -x$ or $x\mapsto 1/x$. 
We rewrite the Darboux evaluations in \cite[(2.9)--(2.10)]{ViDarb} as follows:
\begin{align} \label{eq:icosa1}
\left(\frac{\varphi_5(x)}{1728}\right)^{\!-1/60}
\hpg21{\!-\frac1{60},\,\frac{19}{60}}{\frac45}{\varphi_5(x)} & = x^{-1/60}\,(1-11x-x^2)^{-1/12},\\
 \label{eq:icosa2} \left(\frac{\Phi_5(x)}{1728}\right)^{11/60}
\hpg21{\frac{11}{60},\,\frac{31}{60}}{\frac65}{\varphi_5(x)} & = x^{11/60}\,(1-11x-x^2)^{-1/12}.
\end{align}
%
The modular curves $\XX(5)$, $\XX_1(5)$, $\XX_0(5)$, $\XX(1)$ have genus $0$, as is well-known.
The maps between them have the degrees indicated in this diagram:
\begin{equation}
\XX(5)\stackrel{5}{\longrightarrow}\XX_1(5)\stackrel{2}{\longrightarrow}
\XX_0(5)\stackrel{6}{\longrightarrow}\XX(1).
\end{equation} 
A Hauptmodul of $\XX_1(5)$ is 
\begin{equation} \label{eq:x5}
x_5(\tau) = q\; \prod_{n=0}^{\infty} \frac{(1-q^{5n+1})^5\,(1-q^{5n+4})^5}{(1-q^{5n+2})^5\,(1-q^{5n+3})^5}.
\end{equation}
A Hauptmodul of $\XX(5)$ is $y_5(\tau)=x_5(\tau)^{1/5}$. 
It has this nice expression as a continuous fraction due to Ramanujan \cite{Duke}: 
\begin{align*}
y_5(\tau) 
= \frac{q^{1/5}}{\displaystyle 1+\frac{q}{\displaystyle 1+\frac{q^2}{\displaystyle 1+\frac{q^3}{1+\ldots}}}}.
\end{align*}  
A Hauptmodul of $\XX_0(5)$ is given in (\ref{eq:h5}). As a function on $\XX_1(5)$, 
it is identified as
\begin{equation}
h_5(\tau)=\frac1{x_5(\tau)}-11-x_5(\tau).
\end{equation}
The covering $\XX_1(5)\rightarrow\XX(1)$ is
\begin{equation}
j(\tau)=\frac{1728}{\varphi_5(x_5(\tau))},
\end{equation}
and the covering $\XX(5)\rightarrow\XX(1)$ is Klein's icosahedral Galois covering 
\begin{equation}
j(\tau)=\frac{1728}{\varphi_5(y_5(\tau)^5)}.
\end{equation}
The identification $x=x_5(\tau)$ in (\ref{eq:pback12})
establishes 
\begin{equation}
\varphi_5(x)=\frac{1728}{j(\tau)}, \qquad
1-11x-x^2=x_5(\tau)h_5(\tau).
\end{equation}
This evaluates the right-hand sides of (\ref{eq:icosa1})--(\ref{eq:icosa2}) to, respectively,
\begin{equation}
x_5(\tau)^{-1/10}\,h_5(\tau)^{-1/12} \qquad\mbox{and}\qquad
x_5(\tau)^{1/10}\,h_5(\tau)^{-1/12}.
\end{equation}
Formulas  (\ref{eq:rr1a}), (\ref{eq:rr2a}) follow from (\ref{eq:h5}) and  (\ref{eq:x5}) now.

\begin{remark} \rm \label{rm:level5}
The Darboux evaluations  (\ref{eq:icosa1})--(\ref{eq:icosa2}) with $x=y_5(\tau)^5$ imply that
the two functions are defined on the Galois covering 
\begin{equation} \label{eq:rrdefc}
z^{12}=y\,(1-11y^5-y^{10})
\end{equation}
of $\XX(5)$. 
This is the minimal curve where the modular functions in (\ref{eq:rr1a}), (\ref{eq:rr2a}) are defined.
The genus of the covering equals $55=1+\frac12(12\,(0-2)+12\,(12-1))$ by the Riemann-Hurwitz formula. 
The composition with $\XX(5)\to\XX(1)$ gives a Belyi covering with the branching pattern
$[60^{12}/3^{240}/2^{360}]$. 
The covering $\XX(60)\to\XX(1)$ has a similar branching pattern but of degree 69120;
see \cite[p.~101]{DiamondS}.
The modular curve $\XX(60)$ ought to be an unramified covering of (\ref{eq:rrdefc})
of degree $96=69120/720$.
\end{remark}

\subsection{A proof of the level 7 evaluations} 
\label{sec:level7}

This proof of formulas (\ref{eq:mod7a})--(\ref{eq:mod7cc}) 
follows the same pattern as in \S \ref{sec:rrq}.
We observe that Hauptmoduln or generating functions of relevant modular curves
have nice $q$-factorizations. The Darboux evaluations of Theorem \ref{eq:th237a}
can be rewritten in a way where each factor can be recognized in terms of those nice 
modular functions. In particular, we rewrite 
(\ref{eq:id3a})--(\ref{eq:id3c}) as follows:
\begin{align} \label{eq:dklein1}
\left(\frac{\Phi_3}{1728}\right)^{\!-1/42}
\hpg32{\!-\frac1{42},\,\frac{13}{42},\,\frac{9}{14}}{\frac47,\;\frac67}{\Phi_3}
& = (-x)^{-1/42}\,(1-x)^{5/42}\,F_1^{-1/6},\\
\label{eq:dklein2} \left(\frac{\Phi_3}{1728}\right)^{5/42}\;
\hpg32{\frac5{42},\,\frac{19}{42},\,\frac{11}{14}}{\frac57,\;\frac87}{\Phi_3}
& = (-x)^{5/42}\,(1-x)^{17/42}\,F_1^{-1/6},\\
\label{eq:dklein3} \left(\frac{\Phi_3}{1728}\right)^{17/42}\,
\hpg32{\frac{17}{42},\,\frac{31}{42},\,\frac{15}{14}}{\frac97,\;\frac{10}7}{\Phi_3}
& = (-x)^{17/42}\,(1-x)^{-1/42}\,F_1^{-1/6}.
\end{align}
Recall that $F_1=x^3-8x^2+5x+1$.

The following coverings hold between modular curves, of indicated degrees:
\begin{equation}
\XX(7)\stackrel{7}{\longrightarrow}\XX_1(7)\stackrel{3}{\longrightarrow}
\XX_0(7)\stackrel{8}{\longrightarrow}\XX(1).
\end{equation} 
The modular curve $\XX(7)$ has the genus $g=3$, 
and is isomorphic to Klein's quartic curve $\KC$ defined by (\ref{eq:klein}).
Elkies \cite[(4.4)--(4.6)]{Elkies} gives this parametrization of $\KC$ 
by modular forms\footnote{These are essentially the modular forms
mentioned right after formula (\ref{eq:defeta}); see \cite[Example 29]{FrancMason}. 
Specifically, $X=-\eta(\tau)^4\,K_3(\tau)$, $Y=\eta(\tau)^4\,K_2(\tau)$, $Z=\eta(\tau)^4\,K_1(\tau)$.
} 
on $\XX(7)$:
\begin{align} 
X=& -q^{4/7}\,\prod_{n=1}^{\infty} (1-q^n)^3\,(1-q^{7n})(1-q^{7n-6})(1-q^{7n-1}), \quad\nonumber \\
\label{eq:elk7b} 
Y=& \; q^{2/7}\,\prod_{n=1}^{\infty} (1-q^n)^3\,(1-q^{7n})(1-q^{7n-5})(1-q^{7n-2}),\\
Z=& \; q^{1/7}\,\prod_{n=1}^{\infty} (1-q^n)^3\,(1-q^{7n})(1-q^{7n-4})(1-q^{7n-3}). \nonumber
\end{align}
The curves $\XX_1(7)$ and $\XX_0(7)$ have genus 0.
A Hauptmodul for $\XX_1(7)$ is 
\begin{align} \label{eq:defx7}
x_7(\tau)= & -\frac{X^2 Y}{Z^3} \\
= & -q+2q^2-5q^4+4q^5+O(q^6), \nonumber
\end{align}
consistent with (\ref{eq:cover7}). 
A Hauptmodul for $\XX_0(7)$ is given in (\ref{eq:h7}).
As a function on $\XX_1(7)$,  it is identified in \cite[(4.24)]{Elkies}:
\begin{equation}
h_7(\tau)=\frac{x_7^{\,3}-8x_7^{\,2}+5x_7+1}{x_7^{\,2}-x_7}.
\end{equation}
The covering $\XX_0(7)\rightarrow\XX(1)$ is given in \cite[(4.20)]{Elkies}:
\begin{equation}
j(\tau)=\frac{(h_7^{\,2}+13h_7+49)(h_7^{\,2}+245h_7+7^4)^3}{h_7^{\,7}}.
\end{equation}
Consequently, the covering $\XX_1(7)\rightarrow\XX(1)$ 
is given by 
\begin{equation}
j(\tau)=\frac{1728}{\Phi_3(x_7)}.
\end{equation}
The identification $x=x_7(\tau)$ 
establishes 
\begin{equation}
\Phi_3(x)=\frac{1728}{j(\tau)}, \qquad
F_1=(-x)(1-x)h_7(\tau),
\end{equation}
and
\begin{equation}
-x=\frac{(-X)^2\,Y}{Z^3},\qquad 1-x=\frac{Y^3}{(-X)\,Z^2}.
\end{equation}
This evaluates the right-hand sides of (\ref{eq:dklein1})--(\ref{eq:dklein3}) to, respectively,
\begin{align} \label{eq:devxyz}
(-X)^{-1/3}\,Y^{-1/3}\,Z^{2/3}\,h_7(\tau)^{-1/6}, \nonumber \\
(-X)^{-1/3}\,Y^{2/3}\,Z^{-1/3}\,h_7(\tau)^{-1/6}, \\
(-X)^{2/3}\,Y^{-1/3}\,Z^{-1/3}\,h_7(\tau)^{-1/6}. \nonumber
\end{align}
Formulas (\ref{eq:mod7aa}), (\ref{eq:mod7bb}), (\ref{eq:mod7cc}) for, respectively,
$K_1(\tau),K_2(\tau),K_3(\tau)$ follow from 
the parametrization (\ref{eq:elk7b}) now.

\begin{remark} \rm
Corrected\footnote{There are these typos in \cite{Elkies}:
Formula (4.19) is actually for $j_7^{-1}$, not for $j_7$.
In formula (4.23), a minus sign is missing before ${\sf y}^2{\sf z}/{\sf x}^3$.
Formula (1.18) should be adjusted by \mbox{$40\Phi_4^2\Phi_6
(3\Phi_{14}^2+1008\Phi_4\Phi_6^4+56\Phi_4^2\Phi_6\Phi_{14}-832\Phi_4^4\Phi_6^2-256\Phi_4^7)$.}
In formula (4.35), the factor $5\phi^2-15\phi-7$ should be $5\phi^2-14\phi-7$.
}
formula \cite[(4.19)]{Elkies} states
\begin{equation}
h_7(\tau)=\frac{R_6(X,Y,Z)}{X^2Y^2Z^2},
\end{equation}
where $R_6(X,Y,Z)$ is the invariant in (\ref{eq:inv6}).
Hence the functions in (\ref{eq:devxyz}) are equal to
$ZR_6^{-1/6}$, $YR_6^{-1/6}$, $XR_6^{-1/6}$, respectively. 
It follows that the functions $K_1(\tau),K_2(\tau),K_3(\tau)$ 
are defined on the Galois covering 
\begin{equation} \label{eq:rrdefd}
W^6=XY^5+YZ^5+ZX^5-5X^2Y^2Z^2
\end{equation}
of $\XX(7)$. There are 24 branching points on $W=0$,
thus the genus of the covering equals $73=1+\frac12(6\,(2\cdot3-2)+24\,(6-1))$ by the Riemann-Hurwitz formula.
The composition with $\XX(7)\to\XX(1)$ gives a Belyi covering with the branching pattern
$[42^{24}/3^{336}/2^{504}]$. 
The covering $\XX(42)\to\XX(1)$ has a similar branching pattern but of degree 24192;
see \cite[p.~101]{DiamondS}.
The modular curve $\XX(42)$ ought to be an unramified covering of (\ref{eq:rrdefd})
of degree $24=24192/1008$.
\end{remark}

\begin{remark} \rm
One may naturally ask about expressing $K_1(\tau),K_2(\tau),K_3(\tau)$ 
as sums similar to the Rogers-Ramanujan summations in  (\ref{eq:rr1b}), (\ref{eq:rr2b}).  
Duke \cite{Duke} refers to the work  \cite{Selberg36} of Selberg
and indicates these expressions:
\begin{align}
K_1(\tau)= &\, \frac{q^{-1/42}}{(1-q)(1-q^2)\cdots\;}\sum_{n=0}^{\infty} (-1)^n\,q^{\frac{7n^2+n}2}\,(1-q^{6n+3}), \nonumber\\
K_3(\tau)= &\, \frac{q^{17/42}}{(1-q)(1-q^2)\cdots\;}\sum_{n=0}^{\infty} (-1)^n\,q^{\frac{7n^2+7n}2}
\,(1-q^{n+1})\,(1-q^{6n+6}),\\
K_2(\tau)= &\; q^{1/7}F_1+
\frac{q^{5/42}\,q^2}{(1-q)(1-q^2)\cdots\;}\sum_{n=0}^{\infty} (-1)^n\,q^{\frac{7n^2+13n}2}
\,C_n\,(1-q^{6n+9}), \qquad\nonumber
\end{align}
where $C_n=(1-q^{n+1})(1-q^{n+2})$. 
If Klein's parametrization \cite[(44)]{Klein79} of $\KC$ is adjusted by $q\mapsto\sqrt{q}$
(and some summation shifts), it gives these beautiful identities:
\begin{align}
\frac{K_3(\tau)}{K_2(\tau)}=& \frac{q^{2/7}\,\sum_{-\infty}^{\infty} (-1)^n q^{\frac{21n^2+7n}2}}
{\sum_{-\infty}^{\infty} (-1)^n q^{\frac{21n^2+n}2}+\sum_{-\infty}^{\infty} (-1)^n q^{\frac{21n^2+13n+2}2}},\\
\frac{K_2(\tau)}{K_1(\tau)}=& \frac{q^{1/7}\,\sum_{-\infty}^{\infty} (-1)^n q^{\frac{21n^2+7n}2}}
{\sum_{-\infty}^{\infty} (-1)^n q^{\frac{21n^2-5n}2}+\sum_{-\infty}^{\infty} (-1)^n q^{\frac{21n^2+19n+4}2}},\\
\frac{K_1(\tau)}{K_3(\tau)}=& \frac{q^{-3/7}\,\sum_{-\infty}^{\infty} (-1)^n q^{\frac{21n^2+7n}2}}
{\sum_{-\infty}^{\infty} (-1)^n q^{\frac{21n^2-11n}2}+\sum_{-\infty}^{\infty} (-1)^n q^{\frac{21n^2+25n+6}2}}.
\end{align}
The numerator sums are equal to $\prod_{n=1}^{\infty}(1-q^{7n})$ by (\ref{eq:defeta}).
Consequently, the denominators can be expressed as nice $q$-products as well.
Those expressions are special cases  (with $s=q^7$, $y\in\{q^{-1},q^{-2},q^{-3}\}$) of this version
of the quintuple product identity: 
\begin{align}
 & \prod_{n=1}^{\infty} \frac{(1-s^n)(1-y^2s^n)(1-y^{-2}s^{n-1})}{(1-ys^n)\,(1-y^{-1}s^{n-1})}= \\
 &  \hspace{110pt} \sum_{n=-\infty}^{\infty} \! (-1)^n(y^{3n-1}+y^{-3n})\,s^{\frac{3n^2-n}{2}}. \qquad\nonumber
\end{align}
Compared with Watson's formula \cite[(1.3)]{Watson5}, we have $x=\sqrt{s}$, $a=-y\sqrt{s}$,
and a partial change of the summation index.
\end{remark}

\subsection{Other similar evaluations}
\label{sec:level234}

Here we relate hypergeometric functions with a dihedral, tetrahedral, or octahedral 
projective monodromy groups with modular curves of level $N=2$, 3 or 4.
Correspondingly, the following full coverings $\XX(N)\to\XX(1)$ have exactly those monodromy groups:
\begin{align}
\mbox{a dihedral } S_3: & \qquad 
\XX(2)\stackrel{2}{\longrightarrow}\XX_1(2)\cong\XX_0(2)\stackrel{3}{\longrightarrow}\XX(1); \nonumber \\
\mbox{the tetrahedral }  A_4: & \qquad 
\XX(3)\stackrel{3}{\longrightarrow}\XX_1(3)\cong\XX_0(3)\stackrel{4}{\longrightarrow}\XX(1); \\
\mbox{the octahedral }  S_4: & \qquad 
\XX(4)\stackrel{4}{\longrightarrow}\XX_1(4)\cong\XX_0(4)\stackrel{6}{\longrightarrow}\XX(1). \nonumber
\end{align}
To obtain Darboux evaluations \cite{ViDarb} of standard Gauss hypergeometric functions
with the tetrahedral or octahedral projective monodromies,
the following pull-back transformations reduce the projective monodromy
to $\ZZ/3\ZZ$ or $\ZZ/4\ZZ$, respectively: 
\begin{equation}
\varphi_3(x)=\frac{x\,(x+4)^3}{4(2x-1)^3}, \qquad
\varphi_4(x)=\frac{108x\,(x-1)^4}{(x^2+14x+1)^3}.
\end{equation}
As we will see, these coverings are algebraically equivalent to the modular coverings 
\mbox{$\XX_0(3)\rightarrow\XX(1)$}, \mbox{$\XX_0(4)\rightarrow\XX(1)$}, respectively.
We will use the Darboux evaluations \cite[(2.1)--(2.2),(2.5)--(2.6)]{ViDarb} in this form:
\begin{align} \label{eq:tetr1}
\left(\frac{\varphi_3(x)}{1728}\right)^{\!-1/12}
\hpg21{\!-\frac1{12},\,\frac{1}{4}}{\frac23}{\varphi_3(x)} & = 
\left(-\frac{x}{108}\right)^{-1/12}\,\left(1+\frac{x}4\right)^{-1/4},\\
 \label{eq:tetr2} \left(\frac{\varphi_3(x)}{1728}\right)^{\!1/4}\,
\hpg21{\frac{1}{4},\,\frac{7}{12}}{\frac43}{\varphi_3(x)} & = 
\left(-\frac{x}{108}\right)^{1/4}\,\left(1+\frac{x}4\right)^{-1/4},\\
\label{eq:octa1} \left(\frac{\varphi_4(x)}{1728}\right)^{\!-1/24}
\hpg21{\!-\frac1{24},\,\frac{7}{24}}{\frac34}{\varphi_4(x)} & = 
\left(\frac{x}{16}\right)^{-1/24}\,(1-x)^{-1/6},\\
 \label{eq:octa2} \left(\frac{\varphi_4(x)}{1728}\right)^{5/24}
\hpg21{\frac{5}{24},\,\frac{13}{24}}{\frac54}{\varphi_4(x)} & = 
\left(\frac{x}{16}\right)^{5/24}\,(1-x)^{-1/6}.
\end{align}
Formula (\ref{eq:tetr2}) is equivalent to (\ref{eq:tetra2}), in particular.

We can reduce the dihedral projective monodromy group to $\ZZ/2\ZZ$ 
using identities like (\ref{eq:dihe1})--(\ref{eq:dihe3}),
but that is not consistent with the cubic modular covering $\XX_0(2)\rightarrow\XX(1)$.
Instead, we apply standard cubic transformation \cite[(21)]{ViAGH} of $\hpgo21$-functions
(with $a\in\{1/2,-1\}$) and get
\begin{align} \label{eq:dihb1}
\left(\frac{\varphi_2(x)}{1728}\right)^{\!-1/6}
\hpg21{\!-\frac1{6},\,\frac{1}{6}}{\frac12}{\varphi_2(x)} & = 
\left(\frac{x}{64}\right)^{-1/6}\,\left(1-x\right)^{-1/3},\\   \label{eq:dihb2}
 \left(\frac{\varphi_2(x)}{1728}\right)^{\!1/3}\,
\hpg21{\frac{1}{3},\,\frac23}{\frac32}{\varphi_2(x)} & = 
\left(\frac{x}{64}\right)^{1/3}\,\left(1-x\right)^{-1/3},
\end{align}
with
\begin{equation}
\varphi_2(x)=\frac{27x(1-x)^2}{(1+3x)^3}.
\end{equation}
In terms of Hauptmoduln, the map $\XX_0(2)\rightarrow\XX(1)$ is given by
\begin{equation}
j(\tau)=\frac{(h_2(\tau)+256)^3}{h_2(\tau)^2}.
\end{equation}
In \cite[Appendix]{Maier07} for a reference, $x_2=2^{12}/h_2(\tau)$ and $j(\tau)=(x_2+16)^3/x_2$.
To match $1728/j(\tau)$ with $\varphi_2(x)$, we identify
\begin{equation}
x=\frac{64}{h_2(\tau)+64}.
\end{equation}
Then $1-x=h_2(\tau)/(h_2(\tau)+64)$, and formulas (\ref{eq:dihb1})--(\ref{eq:dihb2}) become
\begin{align} \label{eq:dihj1}
j(\tau)^{1/6}\;\hpg21{\!-\frac{1}{6},\frac1{6}}{\frac12}{\frac{1728}{j(\tau)}} 
& = h_2(\tau)^{-1/3}\;\sqrt{h_2(\tau)+64}, \\
j(\tau)^{-1/3}\,\hpg21{\frac1{3},\,\frac{2}{3}}{\frac32}{\frac{1728}{j(\tau)}} 
& = h_2(\tau)^{-1/3}.
\end{align}
The function $h_2(\tau)^{-1/3}$ is invariant under the congruence subgroup 6D$^0$
in the tables of Cummins and Paule \cite{CumminsP},
as it gives the branching pattern \mbox{$[6\!\cdot\!3/3^3/2^31^3]$} over the $j$-line. 
The quotient $\sqrt{h_2(\tau)+64}$ of the two functions is a Hauptmodul of $\XX(2)$.
Apparently it has no nice $q$-factorization. 
The standard Hauptmodul of $\XX(2)$ is 
Legendre's modular function \cite[p.~63]{Zagier08}
\begin{equation}
\lambda(\tau)=\frac{16\,\eta(\tau/2)^8\,\eta(2\tau)^{16}}{\eta(\tau)^{24}} =
16\sqrt{q}\,\prod_{n=0}^{\infty} \frac{\big(1+q^n\big)^8}{\big(1+\sqrt{q}\,q^{n}\big)^8}.
\end{equation}
By solving algebraic relations and checking the series, we indeed have
\begin{equation}
h_2(\tau)=256\,\frac{1-\lambda(\tau)}{\lambda(\tau)^2}, \qquad 
\sqrt{h_2(\tau)+64}=8\left(\frac{2}{\lambda(\tau)}-1\right).
\end{equation}
The function in (\ref{eq:dihj1}) is invariant under the subgroup 6C$^1$ in \cite{CumminsP},
as it contains both $\Gamma(2)$ and 6D$^0$.
It has the branching pattern \mbox{$[6^3/3^6/2^9]$}.
The covering over $\XX(2)$ can be obtained after substituting $x=64y^2$ into (\ref{eq:dihb1}),
giving $z^3=y\,(1-64y^2)$. 
Here $x,y$ represent Hauptmoduln of  $\XX_1(2)$, $\XX(2)$ (respectively),
like in Remark \ref{rm:level5}.

Now we consider the tetrahedral case similarly.
With a Hauptmodul of $\Gamma_0(3)$ given in (\ref{eq:h3}),
the map $\XX_0(3)\rightarrow\XX(1)$ is 
\begin{equation}
j(\tau)=\frac{(h_3(\tau)+27)\,(h_3(\tau)+243)^3}{h_3(\tau)^3}.
\end{equation}
To match $1728/j(\tau)$ with $\varphi_3(x)$, we identify
\begin{equation}
x=-\frac{108}{h_3(\tau)+27}.
\end{equation}
Then $1+\frac14x=h_3(\tau)/(h_3(\tau)+27)$, and  formulas (\ref{eq:tetr1})--(\ref{eq:tetr2}) become
\begin{align}  \label{eq:teh1}
j(\tau)^{1/12}\,\hpg21{\!-\frac{1}{12},\frac1{4}}{\frac23}{\frac{1728}{j(\tau)}} 
& = h_3(\tau)^{-1/4}\,\big(h_3(\tau)+27\big)^{1/3}, \\
j(\tau)^{-1/4}\,\hpg21{\frac1{4},\,\frac{7}{12}}{\frac43}{\frac{1728}{j(\tau)}} 
& = h_3(\tau)^{-1/4}.
\end{align}
The function $h_3(\tau)^{-1/4}$ is invariant under the congruence subgroup 12B$^0$ in \cite{CumminsP},
with the branching pattern \mbox{$[12\!\cdot\!4/3^41^4/2^8]$} over the $j$-line. 
The quotient $(h_3(\tau)+27)^{1/3}$ 
is a Hauptmodul of $\XX(3)$, without a nice $q$-factorization apparently.
The function in (\ref{eq:teh1}) is invariant under the subgroup 12A$^3$ in \cite{CumminsP},
because it contains both $\Gamma(3)$ and 12B$^0$.  Its branching pattern is \mbox{$[12^4/3^{16}/2^{24}]$}.
The covering over $\XX(2)$ is obtained after substituting $x=-108y^3$ into (\ref{eq:tetr1}),
giving $z^4=y\,(1-27y^3)$. An intermediate congruence group between12A$^3$ and $\Gamma(3)$ is 6D$^1$, 
with the branching \mbox{$[6^4/3^8/2^{12}]$}.

In the octahedral case, the map $\XX_0(4)\rightarrow\XX(1)$ is 
\begin{equation}
j(\tau)=\frac{(h_4(\tau)^2+256h_4(\tau)+4096)^3}{h_4(\tau)^4\,(h_4(\tau)+16)}.
\end{equation}
To match $1728/j(\tau)$ with $\varphi_4(x)$, we identify
\begin{equation}
x=\frac{16}{h_4(\tau)+16}.
\end{equation}
A well-known classical identity \cite[(72)]{Zagier08} implies
\begin{equation}
h_4(\tau)+16=\frac{\eta(2\tau)^{24}}{\eta(4\tau)^{16}\eta(\tau)^8}
=\frac1q \, \prod_{n=1}^{\infty} \big(1+q^{n}\big)^{-8(-1)^{n}}.
\end{equation}
(In terms of \cite[\S 9]{Duke}, this function equals $16/u(\tau)^8$.)
Formulas (\ref{eq:octa1})--(\ref{eq:octa2}) become
\begin{align} \label{eq:octaj1}
j(\tau)^{1/24}\,\hpg21{\!-\frac1{24},\,\frac{7}{24}}{\frac34}{\frac{1728}{j(\tau)}} 
& =\frac{\eta(2\tau)^5}{\eta(4\tau)^2\,\eta(\tau)^3},\\
& = q^{-1/24} \, \prod_{n=1}^{\infty} \big(1+q^{2n-1}\big)^{\!3}\,\big(1+q^{2n}\big), 
\qquad\nonumber \\   \label{eq:octaj2}
j(\tau)^{-5/24}\;\hpg21{\frac5{24},\,\frac{13}{24}}{\frac54}{\frac{1728}{j(\tau)}} 
& =\frac{\eta(4\tau)^2}{\eta(2\tau)\,\eta(\tau)} \\
& = q^{5/24} \;\prod_{n=1}^{\infty} \big(1+q^{2n-1}\big)\,\big(1+q^{2n}\big)^{\!3}.  \nonumber
\end{align}
These functions are defined on the covering 
$z^6=y\,(1-16y^4)$ of $\XX(4)$, obtained by substituting $x=16y^4$ in  (\ref{eq:octa1}). 
The branching over $\XX(1)$ is $[24^6/3^{48}/2^{72}]$, of genus 10.
There are 5 congruence subgroups with this branching 
in \cite{CumminsP}. The correct one is 24A$^{10}$,
because it has intermediate supergroups with the branching patterns
 $[8^6/3^{16}/2^{24}]$ and $[12^6/3^{24}/2^{36}]$, namely, 8A$^2$ and 12A$^4$. 

It is fitting to mention here this classical identity \cite[(74)]{Zagier08}:
\begin{align}
\hpg21{\frac1{12},\,\frac{5}{12}}{1}{\frac{1728}{j(\tau)}} 
= E_4(\tau)^{1/4} = j(\tau)^{1/12}\,\eta(\tau)^2,
\end{align} 
where $E_4(\tau)=1+240q+2160q^2+\ldots$ is 
an Eisenstein series. 

\section{The cases (7A) and (7B)}
\label{sec:c277}

As explained in \S \ref{sec:ourcovs} 
the Darboux covering $\Phi_7:E_7\to \CC\PP^1$ with the branching pattern $[7^31^3/7^31^3/2^{12}]$
can be computed from $\Phi_3$ using cubic transformation (\ref{eq:t32c}). 

\subsection{Degree 24 map}
\label{sec:dc277}

The Darboux curve $E_7$ is determined as the fiber product
\begin{equation} \label{eq:fp77}
\Phi_3(x)=\frac{27z^2}{(4-z)^3}.
\end{equation}
To parametrize this curve by a simple equation, we substitute $z=8\hat{z}/(2\hat{z}+3)$
so that the right-hand side becomes $\hat{z}^{\,2}(2\hat{z}+3)$. 
After the next substitution $\hat{z}=\tilde{z}F_1^3/(G_0G_1)$
we get the equation 
\begin{equation}
1728x(x-1)F_1=\tilde{z}^2(2\tilde{z}F_1^3+3G_0G_1)
\end{equation}
of degree 12 in $x,\tilde{z}$. The curve defined by this equation
can be straightforwardly analyzed with {\sf Maple}'s standard package {\sf algcurves}. 
The curve has genus 1 and is isomorphic to (\ref{eq:darbc7}). 
Eventually, we can find this parametrization of (\ref{eq:fp77})
by the elliptic curve  (\ref{eq:darbc7}): 
\begin{equation}
x=\frac{4(u+v)^2}{(4u-1)^2(8u-1)}, \qquad z=\Phi_7,
\end{equation}
where 
\begin{equation}
\Phi_7= \frac{128(1-4u)(-v-3u+4uv+20u^2)^7}{u^3(1-8u)(1-4v-20u+64u^2)^7}.
\end{equation}
This is a Belyi map with the anticipated branching pattern $[7^31^3/7^31^3/2^{12}]$. 
Note that $\Phi_7$ vanishes at $(u,v)=(0,0)\in E_7$ despite the factor $u^3$ in the denominator.
The divisor of $\Phi_7$ is
\begin{align}
\mbox{div}(\Phi_7)= &\, \textstyle (0,0)+(\frac14,\frac14)+(\frac14,-\frac14)+7U_1+7U_2+7U_3 
\nonumber \\
& \textstyle -{\cal O}-(\frac18,\frac18)-(\frac18,-\frac18)-7V_1-7V_2-7V_3.
\end{align}
Here the $u$-coordinates of $U_1,U_2,U_3$ satisfy $4u(4u-1)(4u-5)=1$, 
and the $u$-coordinates of $V_1,V_2,V_3$ satisfy $16u(4u-1)(8u-3)=1$.  
Table \ref{t:div724} presents several straightforward  rational functions on $E_7$
and their divisors. We use them to present rational and power functions on $E_7$.
For example,
\begin{equation}
\Phi_7 = -\frac{128(1-4u)\widehat{G}_3^{\,7}}{u^3(1-8u)G_4^{\,7}}
= -\frac{128u^4(1-4u)G_3^{\,7}}{(1-8u)\widehat{G}_4^{\,7}}.
\end{equation}

\begin{table}[t!]
\begin{center}
\begin{tabular}{|c|c|c|}
\hline
Id & Function & Divisor \\ 
\hline
--- & $u$ & $2 (0,0)-2{\cal O}$ \\[1pt]
--- & $1-4u$ & $(\frac14,\frac14)+(\frac14,-\frac14)-2{\cal O}$ \\[2pt]
--- & $1-8u$ & $(\frac18,\frac18)+(\frac18,-\frac18)-2{\cal O}$ \\[2pt]
--- & $v-u$ & $(0,0)+(\frac14,\frac14)+(\frac18,\frac18)-3{\cal O}$ \\[2pt]
--- & $v+u$ & $(0,0)+(\frac14,-\frac14)+(\frac18,-\frac18)-3{\cal O}$ \\[2pt]
$F_3$ & $1-4v-4u$ & $3(\frac18,\frac18)-3{\cal O}$ \\[2pt]
$\widetilde{F}_3$  & $1+4v-4u$ & $3(\frac18,-\frac18)-3{\cal O}$ \\[2pt]
$F_4$ & $1-2v-6u$ & $2(\frac14,-\frac14)+(\frac18,\frac18)-3{\cal O}$ \\[2pt]
$\widetilde{F}_4$ & $1+2v-6u$ & $2(\frac14,\frac14)+(\frac18,-\frac18)-3{\cal O}$ \\[2pt]
$G_3$ & $1+2v-10u+16u^2$ & $U_1+U_2+U_3+(\frac14,\frac14)-4{\cal O}$ \\[2pt]
$G_4$ & $1-4v-20u+64u^2$ & $V_1+V_2+V_3+(\frac18,-\frac18)-4{\cal O}$ \\[2pt]
$\widehat{G}_3$& $v+3u-4uv-20u^2$ & $U_1+U_2+U_3+(0,0)+(\frac18,-\frac18)-5{\cal O}$ \\[2pt]
 $\widehat{G}_4$& $v-5u-8uv+24u^2$ & $V_1+V_2+V_3+(0,0)+(\frac14,\frac14)-5{\cal O}$ \\[1pt]
 \hline
\end{tabular}
\end{center}
\caption{Divisors on the curve $v^2=u(1-11u+32u^2)$}
\label{t:div724}
\end{table}

\subsection{Evaluations of type (7A)}

Computation of Darboux evaluations on a genus 1 curve (such as $E_7$)
is less straightforward than on $\PP^1$, because divisors of rational or radical functions 
are restricted by the Picard group \cite[p.~28]{silverman}, 
or the group law on an elliptic curve such as (\ref{eq:darbc7}). 
Consequently, radical factors have to be routinely chosen with extraneous zeroes or poles,
and compensatory factors are often needed. 
A practical demonstration of involved combinatorial calculations 
is given in \cite[\S 4]{ViDarb}.
The obtained Darboux evaluations can be checked by expanding the power series 
in $\sqrt{u}$ on both sides.

\begin{theorem} \label{th:de7a}
With reference to Table $\ref{t:div724}$, we have
\begin{align} \label{eq:de7a1} 
\hpg32{\!-\frac1{14},\,\frac1{14},\,\frac5{14}}{\frac17,\;\frac57}{\Phi_7}
= & \, \frac{F_3^{\,1/14}\,F_4^{\,1/7}\,\widetilde{F}_4^{\,3/7}}{\sqrt{G_4}},\\ 
\hpg32{\frac3{14},\,\frac5{14},\,\frac9{14}}{\frac37,\;\frac97}{\Phi_7}
= & \, \frac{(1-4u)^{4/7}\,F_3^{\,1/14}\,\widetilde{F}_4^{\,4/7}\,G_4^{\,3/2}}
{\widetilde{F}_3^{\,4/7}\,G_3^{\,2}},\\ \label{eq:de7a3}
\hpg32{\frac{11}{14},\,\frac{13}{14},\,\frac{17}{14}}{\frac{11}7,\;\frac{13}7}{\Phi_7}
= & \, \frac{(1-8u)\,\widetilde{F}_3^{\,5/14}\,\widehat{G}_4^{\,11/2}}
{u^{16/7}\,(v-u)^{1/14}\,(v+u)^{6/7}\,G_3^{\,6}}.
\end{align}
\end{theorem}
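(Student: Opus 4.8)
The plan is to transpose the argument of Theorem \ref{eq:th237a} to the genus~$1$ Darboux curve $E_7$. Since $\Phi_7$ reduces the projective monodromy $\KG$ to $\ZZ/7\ZZ$ (Proposition \ref{th:brpats} and \S \ref{sec:ourcovs}), pulling back the hypergeometric equation (\ref{eq:hpgde}) for $\hpg32{-\frac1{14},\frac1{14},\frac5{14}}{\frac17,\;\frac57}{z}$ along $z=\Phi_7$ produces a Fuchsian equation on $E_7$ whose monodromy is completely reducible, hence with a basis of three radical solutions. That equation has local exponents $0,\frac67,\frac27$ at $z=0$, and the corresponding local solutions are the three $\hpgo32$-series of (\ref{eq:de7a1})--(\ref{eq:de7a3}) multiplied by $z^0,z^{2/7},z^{6/7}$; after the pull-back these become the radical solutions of vanishing orders $0,\frac27,\frac67$ at the point $(0,0)\in E_7$. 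It therefore suffices to identify, for each of these three orders, the unique radical solution up to a scalar, and then to divide the second and third by $\Phi_7^{2/7}$ and $\Phi_7^{6/7}$.

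Next I would write out the generalized Riemann $P$-symbol of the pulled-back equation on $E_7$, reading the fibers $\Phi_7=0,1,\infty$ off $\mbox{div}(\Phi_7)$ and Table \ref{t:div724}. The fiber $\Phi_7=1$ contributes only regular points: the hypergeometric equation has a reflection-type singularity at $z=1$, with third local exponent $\frac12$ there by the Fuchs relation, which the double ramification over $z=1$ resolves. Over $z=0$ the exponents $0,\frac67,\frac27$ remain at the unramified points $(0,0),(\frac14,\pm\frac14)$ and become the integers $0,6,2$ at $U_1,U_2,U_3$ (apparent singularities); over $z=\infty$ the exponents $-\frac1{14},\frac1{14},\frac5{14}$ remain at $\mathcal{O},(\frac18,\pm\frac18)$ and become $-\frac12,\frac12,\frac52$ at $V_1,V_2,V_3$, where the local monodromy is the scalar $-I$. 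Thus the non-trivial local data sit at these twelve points, and the three admissible local exponents at each are read off directly.

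Then comes the combinatorial core. A radical solution is determined up to a constant by its $\QQ$-divisor, which must pick one admissible exponent at each of the twelve points, be holomorphic at the remaining points (any non-negative integer order being allowed at the apparent singularities), have total degree $0$, and --- since a suitable integer power of it is rational on $E_7$ --- sum to $\mathcal{O}$ in the group law of $E_7$. I would enumerate the finitely many such divisors compatible with a prescribed vanishing order at $(0,0)$, realize each one as an explicit product of the functions of Table \ref{t:div724} raised to $\QQ$-powers (inserting compensating factors with extra zeros or poles at apparent singular points whenever the naive choice fails the group-law constraint, as in the calculations of \cite[\S 4]{ViDarb}), and finally single out the genuine solution by matching the first few coefficients of its expansion in $\sqrt{u}$ with those of the relevant $\hpgo32$-series. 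For vanishing order $0$ this yields $F_3^{1/14}F_4^{1/7}\widetilde{F}_4^{3/7}G_4^{-1/2}$, which takes the value $1$ at $(0,0)$, establishing (\ref{eq:de7a1}); orders $\frac27$ and $\frac67$ give, after removing the factors $\Phi_7^{2/7}$ and $\Phi_7^{6/7}$, the right-hand sides of the other two identities. Uniqueness of the match is guaranteed by Lemma \ref{th:locexps}, since $0,\frac27,\frac67$ are pairwise distinct modulo $\ZZ$.

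The main obstacle is exactly this divisor bookkeeping on the elliptic curve: the naive recipe of choosing one exponent at every singularity almost never produces the divisor of a radical function, so one must systematically track the obstructing class in $\mbox{Pic}^0(E_7)$ and cancel it with compensating zeros and poles placed at the apparent singular points --- the kind of calculation exhibited in \cite[\S 4]{ViDarb}. Once a candidate radical function with the correct divisor is written down, the power-series test in $\sqrt{u}$ simultaneously confirms that it solves the pulled-back equation and eliminates the spurious candidates, after which proving (\ref{eq:de7a1})--(\ref{eq:de7a3}) amounts to simplifying explicit products of the functions in Table \ref{t:div724}.
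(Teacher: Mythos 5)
Your proposal is sound, but it takes a genuinely different route from the paper's proof of this particular theorem. The paper does not rediscover the three radical solutions from scratch on $E_7$: it obtains \emph{preliminary closed-form expressions} for the left-hand sides by pushing the already-proven type (3A) evaluations of Theorem \ref{eq:th237a} through the cubic transformation (\ref{eq:t32c}), using the relation $\Phi_3=27\Phi_7^{\,2}/(4-\Phi_7)^3$ on the fiber-product curve; from these expressions the exact fractional divisors of the three $\hpgo32$-functions are \emph{computed}, not selected, and the proof then reduces to checking that the stated right-hand sides have the same divisors and value $1$ at $(u,v)=(0,0)$, the constructive content being the realization of those divisors as products of the functions in Table \ref{t:div724} plus some polishing identities. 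Your route --- write the $P$-symbol of the pull-back along $\Phi_7$, use complete reducibility of the cyclic monodromy to get a radical basis, enumerate candidate $\QQ$-divisors subject to the local-exponent, degree-zero and $\mbox{Pic}^0(E_7)$ constraints, realize them via Table \ref{t:div724}, and single out the genuine solutions by series comparison together with Lemma \ref{th:locexps} --- is exactly the ``alternative strategy'' the paper itself employs for Theorem \ref{eq:th237a} and in the second half of the proof of Theorem \ref{th:de7b}, so it is legitimate; what it costs is a heavier combinatorial search on the elliptic curve, and what the paper's route buys is that no candidate selection or series test is needed because the divisors of the left-hand sides are known a priori. Two small points to tighten in your write-up: checking a few coefficients in $\sqrt{u}$ does not by itself ``confirm that the candidate solves the pulled-back equation''; the correct logic is that the genuine radical solution with the prescribed vanishing order at $(0,0)$ exists, is unique up to scalar by Lemma \ref{th:locexps}, and lies among your finitely many explicit candidates, so finitely many coefficients suffice to identify it. Also, your enumeration should explicitly allow zeros of order $1$ or $2$ at \emph{regular} points (they occur for the (3B) and (4A) types, and the degree count only forces their absence after the exponent sums are examined); with one such zero its location is pinned down by the group law, so finiteness survives, but this case must be addressed rather than assumed away.
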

\begin{proof}
Preliminary expressions of the $\hpgo32$-functions 
in terms of (complicated) radical functions on $E_7$ can be obtained
by applying cubic transformation (\ref{eq:t32c}) to the functions in Theorem \ref{eq:th237a}.
Then we compute fractional divisors 
of the $\hpgo32$-functions in (\ref{eq:de7a1})--(\ref{eq:de7a3}), respectively:
\begin{align}
& \textstyle 
\frac{6}{7} (\frac14,\frac14)+\frac2{7} (\frac14,-\frac14)
+\frac{5}{14} (\frac18,\frac18)+\frac1{14}{\cal O}-\frac1{14} (\frac18,-\frac18)
-\frac12V_1-\frac12V_2-\frac12V_3,\\
& \textstyle \frac4{7} (\frac14,-\frac14)- \frac{2}{7} (\frac14,\frac14)-2U_1-2U_2-2U_3 \\
& \hspace{83pt} \textstyle
+\frac9{14}{\cal O}+\frac5{14} (\frac18,-\frac18)+\frac{3}{14} (\frac18,\frac18)
+\frac32V_1+\frac32V_2+\frac32V_3, \notag \\  \label{eq:divf7a3}
& \textstyle\! -\frac{4}{7} (\frac14,\frac14)-\frac6{7} (\frac14,-\frac14)-6U_1-6U_2-6U_3\\
& \hspace{83pt} \textstyle
+ \frac{17}{14} (\frac18,-\frac18)+\frac{13}{14} (\frac18,\frac18)+\frac{11}{14}{\cal O}
+\frac{11}2V_1+\frac{11}2V_2+\frac{11}2V_3.  \qquad \notag
\end{align}
The radical expressions on the right-hand sides of (\ref{eq:de7a1})--(\ref{eq:de7a3}) 
have the same divisors, and evaluate to $1$ at $(u,v)=(0,0)$.

For a constructive proof, we seek to combine (multiplicatively) the functions in Table $\ref{t:div724}$
to obtain the required divisors. The routine is demonstrated in \cite[\S 4.2--4.3]{ViDarb}. 
One strategy could be: to produce the required vanishing orders of the $U_i$'s and $V_i$'s
by using $G_3,G_4$;  then use $1-4u,1-8u,\widetilde{F}_4,\widetilde{F}_3$ consequently
to account for the vanishing orders of 
$(\frac14,-\frac14)$, $(\frac18,\frac18)$, $(\frac14,\frac14)$, $(\frac18,-\frac18)$, respectively.
For example, this produces
\begin{equation}
 \frac{G_4^{11/2}\,(1-8u)^{13/14}\,\widetilde{F}_4^{\,22/7}}
 {G_3^{\,6}\;(1-4u)^{6/7}\;\widetilde{F}_3^{\,39/14}}
\end{equation}
for divisor (\ref{eq:divf7a3}). 
The powers of $\widetilde{F}_3$, $\widetilde{F}_4$ are rather awkward. 
We can modify this expression by the identities
\begin{align}
(v-u)G_4 & =(1-8u)\widehat{G}_4, & (1-4u)(v-u)\widetilde{F}_3&=(1-8u)(v+u)\widetilde{F}_4,
 \qquad\notag \\
(v+u)G_3 & =(1-4u)\widehat{G}_3,  & (1-4u)(v+u)F_3 & =(1-8u)(v-u)F_4, \\
F_4\widetilde{F}_4& =(1-4u)^2(1-8u), \hspace{-8pt} &  (v-u)(v+u) & =u(1-4u)(1-8u) \notag
\end{align}
to get a more polished formula like (\ref{eq:de7a3}).
\end{proof}

The companion hypergeometric solutions at $\Phi_7=\infty$ are
\begin{align}
& \Phi_7^{\,1/14}\;\hpg32{\!-\frac1{14},\,\frac3{14},\,\frac{11}{14}}{\frac47,\;\frac67}{\frac1{\Phi_7}},\qquad
\Phi_7^{-1/14}\;\hpg32{\frac1{14},\,\frac5{14},\,\frac{13}{14}}{\frac57,\;\frac87}{\frac1{\Phi_7}}, \quad\notag \\
& \Phi_7^{-5/14}\;\hpg32{\frac{5}{14},\,\frac{9}{14},\,\frac{17}{14}}{\frac{9}7,\;\frac{10}7}{\frac1{\Phi_7}}.
\end{align}
Apart from the power factors and the argument $1/\Phi_7$,
they are contiguous to the $\hpgo32$-functions in (\ref{eq:de7a1})--(\ref{eq:de7a3}).
The functions $\Phi_7$ and $1/\Phi_7$ can be interchanged by this automorphism of $E_7$:
\begin{equation} \label{eq:ec7inv}
(u,v)\mapsto \left(\frac1{32u},-\frac{v}{32u^2} \right).
\end{equation}
\begin{theorem}
With reference to Table $\ref{t:div724}$, we have
\begin{align} \label{eq:de7c1}
\hpg32{\!-\frac1{14},\,\frac3{14},\,\frac{11}{14}}{\frac47,\;\frac67}{\Phi_7}
= & \, \frac{(1-4u)^{1/7}\,\widetilde{F}_3^{\,3/14}\,\widetilde{F}_4^{\,1/7}}{(1-8u)^{1/14}\;\sqrt{G_4}},\\
\hpg32{\frac1{14},\,\frac5{14},\,\frac{13}{14}}{\frac57,\;\frac87}{\Phi_7}
= & \, \frac{(1-4u)^{2/7}(1-8u)^{1/7}\,F_3^{\,1/14}\,\widetilde{F}_4^{\,2/7}\sqrt{G_4}}{G_3},\\
\label{eq:de7c3}
\hpg32{\frac{5}{14},\,\frac{9}{14},\,\frac{17}{14}}{\frac{9}7,\;\frac{10}7}{\Phi_7}
= & \, \frac{(1-8u)\,(v-u)^{3/14}\,\widehat{G}_4^{\,5/2}}
{u^{8/7}\,(v+u)^{3/7}\,\widetilde{F}_3^{\,1/14}\,G_3^{\,3}}.
\end{align}
\end{theorem}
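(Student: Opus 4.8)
The plan is to run exactly the same machine as in Theorem \ref{th:de7a}, now applied to the companion solutions at $\Phi_7=\infty$ rather than at $\Phi_7=0$. First I would obtain preliminary (messy) radical expressions for the three $\hpgo32$-functions appearing in \eqref{eq:de7c1}--\eqref{eq:de7c3}: apply cubic transformation \eqref{eq:t32c} to the evaluations of the companion solutions of type (3A) at $\Phi_3=\infty$ (the functions in \eqref{eq:locs3i}, evaluated via Theorem \ref{th:phi33}), together with the fiber-product relation \eqref{eq:cub37} and the rational parametrization of $E_7$ recorded in \S \ref{sec:dc277}. Equivalently and more cheaply, one can use the automorphism \eqref{eq:ec7inv} that swaps $\Phi_7\leftrightarrow 1/\Phi_7$: pulling back the evaluations \eqref{eq:de7a1}--\eqref{eq:de7a3} of Theorem \ref{th:de7a} under \eqref{eq:ec7inv} already produces radical functions on $E_7$ with argument $1/\Phi_7$, which after the power-factor bookkeeping (the $\Phi_7^{\pm1/14}$, $\Phi_7^{-5/14}$ prefactors and a shift of the $\hpgo32$-parameters by a contiguity step) give candidates for the left-hand sides here.

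Next I would compute the fractional divisors on $E_7$ of the three $\hpgo32$-functions on the left. The singular fibers of the pulled-back Fuchsian equation sit over $\Phi_7=0$, $\Phi_7=1$, $\Phi_7=\infty$ and the zeros of the Wronskian-type factors; using the passport $[7^31^3/7^31^3/2^{12}]$ and the divisor of $\Phi_7$ recorded in \S \ref{sec:dc277}, the local exponents at each point are forced, and since the monodromy is cyclic (degree $24$ reduction to $\ZZ/7\ZZ$) Lemma \ref{th:locexps} picks out a unique radical solution attached to each local exponent at the ramified fibers. This yields divisors analogous to \eqref{eq:divf7a3}; in particular the negative part is concentrated (as before) on the points $V_1,V_2,V_3$ or $U_1,U_2,U_3$ and on ${\cal O}$, $(\frac14,\pm\frac14)$, $(\frac18,\pm\frac18)$, with rational coefficients dictated by the exponents $-1/14,3/14,11/14$ etc.

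Then, exactly as in the proof of Theorem \ref{th:de7a}, I would realize each required $\QQ$-divisor multiplicatively from the building blocks in Table \ref{t:div724}: use $G_3$ or $G_4$ to install the correct vanishing orders along the $U_i$'s or $V_i$'s, then clean up the orders at $(\frac14,\pm\frac14)$, $(\frac18,\pm\frac18)$ and $(0,0)$ using $1-4u$, $1-8u$, $v\pm u$, $F_3,\widetilde F_3,F_4,\widetilde F_4,\widehat G_3,\widehat G_4$. The syzygies displayed at the end of that proof — relations such as $(v-u)G_4=(1-8u)\widehat G_4$, $(v+u)G_3=(1-4u)\widehat G_3$, $F_4\widetilde F_4=(1-4u)^2(1-8u)$, $(v-u)(v+u)=u(1-4u)(1-8u)$ — are used to trade awkward fractional powers for the polished forms on the right of \eqref{eq:de7c1}--\eqref{eq:de7c3}. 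Finally, since two radical functions on $E_7$ with the same fractional divisor differ by a constant, matching the value $1$ at $(u,v)=(0,0)$ (where each $\hpgo32$-series equals $1$ and each displayed right-hand side is checked to equal $1$) pins down the constant, and a check of the first few coefficients of the power series in $\sqrt{u}$ confirms that the chosen radical branch is the correct one.

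The main obstacle is the same as in Theorem \ref{th:de7a}: the group law on the elliptic curve \eqref{eq:darbc7} forbids prescribing an arbitrary $\QQ$-divisor, so the naive product of table entries will generically carry spurious zeros or poles and non-minimal fractional exponents. Overcoming this requires the careful combinatorial bookkeeping of compensating factors and repeated use of the linear-equivalence syzygies, and it is here (rather than in any conceptual step) that the work lies; the passport, the cyclic-monodromy argument, and Lemma \ref{th:locexps} make every other step routine.
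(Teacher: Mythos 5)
Your overall machine is the paper's: obtain preliminary radical expressions for the three left-hand sides, compute their fractional divisors on $E_7$, observe that the displayed right-hand sides (built from Table \ref{t:div724} via the syzygies) have the same divisors, and fix the constant by the value $1$ at $(u,v)=(0,0)$. The paper's proof is exactly this, with the divisors listed explicitly. However, two of your specific routes to the preliminary expressions would fail if executed literally. First, the cubic transformation that connects these functions to the type (3A) data is (\ref{eq:t32b}) with $z=1/\Phi_7$ (so that $27z/(4z-1)^3=\Phi_3$), applied to the evaluations of Theorem \ref{eq:th237a}, followed by the interchange $1/\Phi_7\mapsto\Phi_7$ via (\ref{eq:ec7inv}); it is not (\ref{eq:t32c}), whose parameter specializations cannot produce the lower parameters $\frac47,\frac67$, $\frac57,\frac87$, $\frac97,\frac{10}7$ of the present left-hand sides, and it is not Theorem \ref{th:phi33}, whose outputs live on a M\"obius-shifted chart and are linear combinations of radicals rather than single radical solutions. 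Second, there is no ``contiguity step'' converting the pullbacks of (\ref{eq:de7a1})--(\ref{eq:de7a3}) under (\ref{eq:ec7inv}) into the present functions: the upper and lower parameters differ by $\frac17,\frac27,\frac37$, which are not integers, so no contiguous relation exists. The correct statement is that the present $\hpgo32$-functions are (up to the $\Phi_7^{\pm1/14},\Phi_7^{-5/14}$ prefactors) the companion solutions at $\Phi_7=\infty$ of the same pulled-back equation, and the passage to argument $\Phi_7$ is effected purely by the automorphism (\ref{eq:ec7inv}).

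These are repairable inaccuracies rather than a collapse of the argument, because the rest of your proposal --- determining the possible fractional divisors from the local exponents, invoking Lemma \ref{th:locexps} to get uniqueness of the radical solution per exponent, and selecting the correct candidate by comparing a few power-series coefficients in $\sqrt{u}$ before matching the value at $(0,0)$ --- is precisely the selection mechanism the paper uses in the neighbouring theorems and suffices to pin down each divisor without the preliminary expressions. But as written, a reader following your stated transformation steps (the (\ref{eq:t32c})/Theorem \ref{th:phi33} route, or the ``contiguity step'') would get stuck; replace them by (\ref{eq:t32b}) with $z=1/\Phi_7$ applied to Theorem \ref{eq:th237a} plus (\ref{eq:ec7inv}), or argue directly through the companion-solution structure.
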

\begin{proof}
We compute preliminary algebraic expressions on $E_7$ for the $\hpgo32$-functions
by either of these two ways: 
\begin{itemize}
\item by using the mentioned contiguous relations;
\item or by applying cubic transformation (\ref{eq:t32b}) with $z=1/\Phi_7$
to the functions in Theorem \ref{eq:th237a},  and subsequently interchanging 
$1/\Phi_7\mapsto \Phi_7$ by (\ref{eq:ec7inv}).
\end{itemize}
Then we compute their fractional divisors, respectively:
\begin{align}
& \textstyle 
\frac{3}{7} (\frac14,\frac14)+\frac1{7} (\frac14,-\frac14)
+\frac{11}{14}{\cal O}+\frac{3}{14} (\frac18,-\frac18)-\frac1{14} (\frac18,\frac18)
-\frac12V_1-\frac12V_2-\frac12V_3,\\
& \textstyle \frac2{7} (\frac14,-\frac14)- \frac{1}{7} (\frac14,\frac14)-U_1-U_2-U_3 \\
& \hspace{83pt} \textstyle
+\frac{13}{14}(\frac18,-\frac18)+\frac5{14}(\frac18,\frac18) +\frac{1}{14} {\cal O}
+\frac12V_1+\frac12V_2+\frac12V_3, \nonumber \\
& \textstyle\! -\frac{2}{7} (\frac14,\frac14)-\frac3{7} (\frac14,-\frac14)-3U_1-3U_2-3U_3\\
& \hspace{83pt} \textstyle
+ \frac{17}{14} (\frac18,\frac18)+\frac{9}{14} {\cal O}+\frac{5}{14}(\frac18,-\frac18)
+\frac{5}2V_1+\frac{5}2V_2+\frac{5}2V_3. \nonumber
\end{align}
The radical expressions on the right-hand sides of (\ref{eq:de7c1})--(\ref{eq:de7c3}) 
have the same divisors, and evaluate to $1$ at $(u,v)=(0,0)$.
\end{proof}

\subsection{Evaluations of type (7B)}

Similarly, we can apply cubic transformation (\ref{eq:t32c}) to the formulas of Theorem \ref{eq:th237b}
and eventually obtain formulas like
\begin{align} \label{eq:de7bp}
\hpg32{\!-\frac3{14},\,\frac1{14},\,\frac{3}{14}}{\frac17,\;\frac37}{\Phi_7}
= & \, \frac{(1-4u)^{4/7}\,F_4^{\,1/7}\,(1+2v-2u+32uv)}{(1-8u)^{1/14}\,\widetilde{F}_3^{\,3/14}\,G_4^{3/2}},
\end{align}
with a few zeroes at regular points of the pulled-back Fuchsian equation. 
In this example, the $u$-coordinates of those regular points satisfy $8u(64u^2+2u-1)=1$.
But there are $\hpgo32$-functions of type (7B) that do not vanish at regular points.

\begin{theorem} \label{th:de7b}
With reference to Table $\ref{t:div724}$, we have
\begin{align} \label{eq:de7b1}
\hpg32{\!-\frac1{14},\,\frac1{14},\,\frac{9}{14}}{\frac27,\;\frac67}{\Phi_7}
= & \, \frac{(1-4u)^{1/7}\,1-8u)^{4/7}\,F_4^{\,2/7}}{F_3^{\,1/14}\;\sqrt{G_4}},\\
\label{eq:de7b2}
\hpg32{\frac1{14},\,\frac3{14},\,\frac{11}{14}}{\frac37,\;\frac87}{\Phi_7}
= & \, \frac{(1-8u)^{1/14}\,\widetilde{F}_4^{\,6/7}\sqrt{G_4}}{(1-4u)^{1/7}\,\widetilde{F}_3^{\,3/14}\,G_3},\\
\label{eq:de7b3}
\hpg32{\frac{9}{14},\,\frac{11}{14},\,\frac{19}{14}}{\frac{11}7,\;\frac{12}7}{\Phi_7}
= & \, \frac{(1-8u)\,\widetilde{F}_3^{\,1/14}\,\widehat{G}_4^{\,9/2}}
{u^{13/7}\,(v-u)^{3/14}\,(v+u)^{4/7}\,G_3^{\,5}}.
\end{align}
\end{theorem}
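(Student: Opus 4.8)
\emph{Proof strategy.} The plan is to follow exactly the template used for Theorem~\ref{th:de7a} and for the companion theorem proved earlier in this section. First I would produce preliminary (unwieldy) radical expressions for the three $\hpgo32$-functions as functions on the Darboux curve $E_7$. Applying cubic transformation (\ref{eq:t32c}) to the three (3B)-evaluations of Theorem~\ref{eq:th237b} — with $a=-1/14,\,3/14,\,5/14$ and $b=1/14,\,5/14,\,9/14$ respectively — and substituting $\Phi_3=27\Phi_7^{\,2}/(4-\Phi_7)^3$ from (\ref{eq:cub37}) together with the parametrization of $E_7$ from \S\ref{sec:dc277}, yields Darboux evaluations of three (7B)-type $\hpgo32$-functions on $E_7$; the first of these is precisely (\ref{eq:de7bp}). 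Each of these three functions is, up to a permutation of its parameters, contiguous to one of the three functions in the theorem, so applying the corresponding contiguous relations — realized as gauge transformations built from the pulled-back Fuchsian equation, in the manner of (\ref{eq:contig11}) — converts the preliminary expressions into radical functions on $E_7$ representing the left-hand sides of (\ref{eq:de7b1})--(\ref{eq:de7b3}).

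The second step is to compute the fractional divisors of these functions directly and to match them with the stated right-hand sides. The pulled-back equation is the pull-back by $\Phi_7$ of the (7B)-representative equation, which carries local exponents $0,1/7,5/7$ at $z=0$, exponents $-1/14,1/14,9/14$ at $z=\infty$, and exponents $0,1,-1/2$ at $z=1$; combining these with $\mathrm{div}(\Phi_7)$ recorded in \S\ref{sec:dc277} and invoking Lemma~\ref{th:locexps} at $(u,v)=(0,0)$, one pins down, for each local exponent there, a unique radical solution, hence a candidate divisor supported on the fibres over $\Phi_7\in\{0,1,\infty\}$. One checks that the three functions of the theorem are exactly the three companion radical solutions whose divisors are supported on those fibres only — unlike the function in (\ref{eq:de7bp}), whose divisor acquires extra zeroes at regular points of the pulled-back equation. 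For instance, for the first function one obtains the fractional divisor
\[
\tfrac17\left(\tfrac14,\tfrac14\right)+\tfrac57\left(\tfrac14,-\tfrac14\right)
+\tfrac9{14}\left(\tfrac18,\tfrac18\right)+\tfrac1{14}\left(\tfrac18,-\tfrac18\right)
-\tfrac12(V_1+V_2+V_3)-\tfrac1{14}\,{\cal O},
\]
which, reading off Table~\ref{t:div724}, is precisely the divisor of $(1-4u)^{1/7}(1-8u)^{4/7}F_4^{\,2/7}F_3^{-1/14}G_4^{-1/2}$; the other two expressions come out the same way. Constructively one proceeds as in the proof of Theorem~\ref{th:de7a}: use $G_3,G_4$ (and $\widehat{G}_3,\widehat{G}_4$) to install the prescribed orders at the $U_i$ and $V_i$, then $1-4u$, $1-8u$, $F_3$, $\widetilde{F}_3$, $F_4$, $\widetilde{F}_4$ for the remaining points, and finally polish the awkward exponents using the identities among the table functions listed at the close of that proof. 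Since the divisors agree, both sides are holomorphic at $(u,v)=(0,0)$, where $\Phi_7$ vanishes simply, and both equal $1$ there, the identities follow; a concluding check expands both sides as power series in $\sqrt u$.

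The main obstacle here is combinatorial rather than conceptual: the genus-$1$ bookkeeping on the elliptic curve (\ref{eq:darbc7}). A $\QQ$-divisor of degree $0$ need not be the divisor of a radical function — it must in addition satisfy the group-law constraint — so the elementary functions of Table~\ref{t:div724} must be combined with compensating zeroes and poles and then rewritten via the listed relations, which is routine but error-prone. The more delicate point is matching each of the three companion radical solutions to the intended $\hpgo32$-function: because other members of the (7B) contiguity class, such as the one in (\ref{eq:de7bp}), do vanish at regular points of the pulled-back equation, one must verify not only that a candidate divisor has the correct local exponents in the fibres over $0,1,\infty$ but also that no regular-point zeroes or poles occur, and it is this absence that singles out precisely the three evaluations (\ref{eq:de7b1})--(\ref{eq:de7b3}).
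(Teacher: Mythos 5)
Your proposal follows essentially the same route as the paper's own proof: preliminary radical expressions on $E_7$ obtained by applying the cubic transformation (\ref{eq:t32c}) to the evaluations of Theorem \ref{eq:th237b}, conversion to the target functions via contiguous relations, computation of the fractional divisors (your displayed divisor agrees with the paper's (\ref{eq:de7bfd1})), and matching against the right-hand sides, which have the same divisors and value $1$ at $(u,v)=(0,0)$; your remarks on candidate divisors and Lemma \ref{th:locexps} echo the paper's alternative argument. The only looseness — describing the transformed functions as contiguous "up to a permutation of parameters" rather than up to a projective power factor — is immaterial to the argument.
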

\begin{proof}
Preliminary expressions on $E_7$ for a set of contiguous $\hpgo32$-functions (such as (\ref{eq:de7bp}))
can be obtained by applying cubic transformation (\ref{eq:t32c}) to the formulas of Theorem \ref{eq:th237b}.
By using contiguous relations we then compute preliminary expressions for the target 
functions. Their fractional divisors are 
\begin{align}
& \textstyle  \label{eq:de7bfd1}
\frac5{7} (\frac14,-\frac14)+\frac{1}{7} (\frac14,\frac14)
+\frac{9}{14} (\frac18,\frac18)+\frac{1}{14} (\frac18,-\frac18)-\frac1{14} {\cal O}
-\frac12V_1-\frac12V_2-\frac12V_3,\\ \label{eq:de7bfd2} 
& \textstyle \frac4{7} (\frac14,\frac14)-\frac{1}{7} (\frac14,-\frac14)-U_1-U_2-U_3 \\
& \hspace{83pt} \textstyle
+\frac{11}{14}(\frac18,-\frac18)+\frac3{14} {\cal O}+\frac{1}{14} (\frac18,\frac18) 
+\frac12V_1+\frac12V_2+\frac12V_3, \notag \\
& \textstyle\! -\frac{4}{7} (\frac14,-\frac14)-\frac5{7} (\frac14,\frac14)-5U_1-5U_2-5U_3\\
& \hspace{83pt} \textstyle
+ \frac{19}{14} {\cal O}+\frac{11}{14} (\frac18,\frac18)+\frac{9}{14}(\frac18,-\frac18)
+\frac{9}2V_1+\frac{9}2V_2+\frac{9}2V_3. \notag
\end{align}
The radical expressions on the right-hand sides of (\ref{eq:de7b1})--(\ref{eq:de7b3}) 
have the same divisors, and evaluate to $1$ at $(u,v)=(0,0)$.

Alternatively, we can follow the strategy in the proof of Theorem \ref{eq:th237a}. 
In (\ref{eq:kl7blb}) we see a basis of local solutions at $z=0$ 
of a hypergeometric equation of type (7B). 
For a radical solution with the local exponent $0$ at $(u,v)=(0,0)$, we have 12 candidate
fractional divisors in total: apart from (\ref{eq:de7bfd1}) we can permute the exponents 
to $(\frac14,-\frac14)$ and $(\frac14,\frac14)$, and the exponents to $\cal O$, $(\frac18,\frac18)$,
$(\frac18,-\frac18)$. Starting from a preliminary expression of one candidate solution,
preliminary expressions for other candidates can be obtained by using the factor 
$F_4^{\,2/7}\widetilde{F}_4^{-2/7}F_3^{-2/21}\widetilde{F}_3^{\,2/21}$ to permute the exponents of
 $(\frac14,-\frac14)$ and $(\frac14,\frac14)$, and so on. 
By checking all 12 candidates we find the right solution with the divisor (\ref{eq:de7bfd1}).
By Lemma \ref{th:locexps}, we have two candidates for the transformed 
second solution in (\ref{eq:kl7blb}) with the exponent $1/7$ at $(u,v)=(0,0)$. 
Their divisors differ by a cyclic permutation of the exponents to $\cal O$, $(\frac18,\frac18)$,
$(\frac18,-\frac18)$. The right solution leads to formula (\ref{eq:de7b2}).
For the third solution, we have just one candidate. 
\end{proof}

The companion hypergeometric solutions at $\Phi_7=\infty$ are
\begin{align}
& \Phi_7^{\,1/14} \; \hpg32{\!-\frac1{14},\,\frac1{14},\,\frac{9}{14}}{\frac27,\;\frac67}{\frac1{\Phi_7}}\!, 
 \qquad
\Phi_7^{-1/14} \; \hpg32{\frac1{14},\,\frac3{14},\,\frac{11}{14}}{\frac37,\;\frac87}{\frac1{\Phi_7}}\!,
 \quad\notag \\
& \Phi_7^{-9/14}\; \hpg32{\frac{9}{14},\,\frac{11}{14},\,\frac{19}{14}}{\frac{11}7,\;\frac{12}7}{\frac1{\Phi_7}}\!.
\end{align}
The parameters of the $\hpgo32$-solutions are the same as in Theorem \ref{th:de7b},
and the argument can be transformed to $\Phi_7$ by (\ref{eq:ec7inv}).
There is thus no need for separate formulas.  
The symmetry of parameters is reflected by the symmetric matrix in (\ref{eq:3f2mat}).
The symmetry is a characteristic of $\hpgo32$-functions on the right hand-side of the 
quadratic transformation (\ref{eq:qua47}).

\section{The cases (4A) and (4B)}
\label{sec:c247}

As explained in \S \ref{sec:ourcovs}, 
the Darboux covering \mbox{$\Phi_4:E_4\to \CC\PP^1$} with the branching pattern $[7^31^3/4^6/2^{12}]$
can be computed from $\Phi_7$ using quadratic transformation (\ref{eq:qua47}).
The covering $\Phi_4$ is given by formula (\ref{eq:qua47}), but the curve $E_4$ is defined
by the involution of (\ref{eq:ec7inv}) of $E_7$. The involution-invariant functions
\begin{equation}
\tilde{p}=u+\frac{1}{32u},\qquad  \widetilde{w}=v-\frac{v}{32u^2}
\end{equation}
satisfy $8\tilde{w}^2=(32\tilde{p}-11)(8\tilde{p}^2-1)$. Taking
\begin{align}
p=&\quad\! \frac{1}{32\tilde{p}-11} \hspace{7pt} = \;\, \frac{u}{1-11u+32u^2}, \\
w=&\; \frac{32\tilde{w}}{(32\tilde{p}-11)^2} = \frac{v\,(1-32u^2)}{(1-11u+32u^2)^2}
\end{align}
we get the elliptic curve (\ref{eq:darbc7}).
The functions $(p,w)$ define a 2-isogeny \cite[\S III.4]{silverman} 
between $E_4$ and $E_7$ as elliptic curves. 

The divisor of $\Phi_4$ on $E_4$ is computed to be
\begin{align}
(0,0)+(1,4)+(1,-4)+7S_1+7S_2+7S_3 \hspace{90pt} \nonumber \\
-4T_1-4T_2-4T_3-4T_4-4T_5-4T_6.
\end{align}
Here the $p$-coordinates of $S_1,S_2,S_3$ satisfy $7p(7p^2-21p+5)=1$,
while the $p$-coordinates of $T_1,\ldots,T_6$ satisfy 
\begin{equation} \label{eq:teq}
(49p^2-29)(49p^3+98p^2-188p-10)+435p=\frac{1}{7p}.
\end{equation} 
Table \ref{t:phys2} presents several 
rational functions on $E_4$ and their divisors. 
We use them to present rational and power functions on $E_4$.
In particular,
\begin{equation}
\Phi_4=\frac{512\,(w-4p)\,F_5^{\,7}}{(1+w+3p)\,G_5^{\,4}}.
\end{equation}

It is more straightforward to compute representative Darboux evaluations of type (4B),
because of a direct relation to type (7B) evaluations by quadratic transformation (\ref{eq:qua47}).
\begin{theorem}
With reference to Table $\ref{t:phys2}$, we have
\begin{align} \label{eq:de4b1}
\hpg32{\!-\frac1{28},\,\frac3{14},\,\frac{13}{28}}{\frac27,\;\frac67}{\Phi_4}
= & \, \frac{(1-p)^{2/7}\,F_6^{\,1/14}}{G_5^{\,1/7}},\\
\hpg32{\frac3{28},\,\frac5{14},\,\frac{17}{28}}{\frac37,\;\frac87}{\Phi_4}
= & \, \frac{\widetilde{F}_6^{\,3/14}\,G_5^{\,3/7}}
{(1-p)^{5/7}\,F_5},\\  \label{eq:de4b3}
\hpg32{\frac{19}{28},\,\frac{13}{14},\,\frac{33}{28}}{\frac{11}7,\;\frac{12}7}{\Phi_4}
= & \, \frac{\widetilde{F}_6^{\,5/14}\,G_5^{\,19/7}}
{(1-p)^{4/7}\,F_5^{\,5}}.
\end{align}
\end{theorem}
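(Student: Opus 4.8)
The plan is to mirror the arguments for Theorems \ref{th:de7a} and \ref{th:de7b}, descending this time through the $2$-isogeny $E_7\to E_4$ given by the involution-invariant coordinates $p=u/(1-11u+32u^2)$ and $w=v(1-32u^2)/(1-11u+32u^2)^2$. First I would produce preliminary (unpolished) radical expressions on $E_4$ for the three hypergeometric functions in the statement. For (\ref{eq:de4b1}) this is immediate from quadratic transformation (\ref{eq:t32a}) with $a=-1/28$, $b=1/28$: it rewrites $\hpg{3}{2}{-1/28,\,3/14,\,13/28}{2/7,\,6/7}{\Phi_4}$, evaluated at $\Phi_4=-4\Phi_7/(\Phi_7-1)^2$, as $(1-\Phi_7)^{-1/14}$ times the type (7B) function $\hpg{3}{2}{-1/14,\,1/14,\,9/14}{2/7,\,6/7}{\Phi_7}$, which Theorem \ref{th:de7b} already writes as an explicit radical function on $E_7$. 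Since $1-\Phi_7$ is rational on $E_7$ and $-1/14\in\QQ$, the whole right-hand side is a radical function on $E_7$; being a function of $\Phi_4$ alone it is invariant under the involution (\ref{eq:ec7inv}) and so descends to a radical function on $E_4$ in the coordinates $(p,w)$. The other two identities concern the remaining members of the basis of local solutions of the same pulled-back equation at $\Phi_4=0$ (the triple in (\ref{eq:de4b1})--(\ref{eq:de4b3}) being, up to the power factors $\Phi_4^{0},\Phi_4^{1/7},\Phi_4^{5/7}$, a basis there); preliminary expressions for them follow either by applying (\ref{eq:t32a}) with the appropriate parameters to the corresponding members of the basis of Theorem \ref{th:de7b}, or by the contiguous relations of \S \ref{sec:hgp}.

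Next I would compute the fractional divisors on $E_4$ of the three functions so obtained. The fibre $\Phi_4=\infty$ contributes the poles at $(0,0)$, $(1,\pm 4)$ and $S_1,S_2,S_3$ with $7p(7p^2-21p+5)=1$; the ramified fibre $\Phi_4=1$ contributes $T_1,\dots,T_6$ governed by (\ref{eq:teq}); and the prefactors $(1-p)$, $F_5$, $F_6$, $\widetilde{F}_6$, $G_5$ of Table \ref{t:phys2} carry the remaining zeros and poles. Since the local exponents of the underlying type (7B) equation at $z=0$, $z=1$, $z=\infty$ are pairwise distinct modulo $\ZZ$, Lemma \ref{th:locexps} applied at the points over $z=0$ and $z=\infty$, together with a comparison of the leading power-series coefficients in the local parameter at $(p,w)=(0,0)$, singles out which radical function is the hypergeometric one; the normalisation is that it equals $1$ at $(p,w)=(0,0)$, where $\Phi_4$ has a simple zero. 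I would then read the right-hand sides of (\ref{eq:de4b1})--(\ref{eq:de4b3}) off Table \ref{t:phys2}, verify that they have exactly these divisors and the same value at $(0,0)$, and conclude equality, since two radical functions on a curve with the same divisor and a common value at one point coincide.

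Finally, as in the proof of Theorem \ref{th:de7a}, I would convert the awkward fractional powers that emerge directly from the transformation into the clean monomials displayed in the statement, using the polynomial identities among the entries of Table \ref{t:phys2} (the $E_4$-analogues of the list $(v-u)G_4=(1-8u)\widehat{G}_4$, $F_4\widetilde{F}_4=(1-4u)^2(1-8u)$, etc.\ used in the proof of Theorem \ref{th:de7a}) together with the $2$-isogeny formulas above. I expect the main obstacle to be exactly this divisor bookkeeping on the genus-$1$ curve $E_4$: because the Picard group obstructs realising an arbitrary $\QQ$-divisor, each radical solution only arises with extraneous zeros and poles that must be cancelled by a carefully chosen product of the functions in Table \ref{t:phys2}, and one must correctly track how the fibres $\Phi_7=1$ and $\Phi_7=\infty$ split under the $2$-isogeny before the expressions collapse to the stated forms.
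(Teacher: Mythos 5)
Your proposal follows essentially the same route as the paper: apply quadratic transformation (\ref{eq:t32a}) (with $a=-\frac1{28}$, $b=\frac1{28}$, and the analogous parameter choices for the other two functions) to the three evaluations of Theorem \ref{th:de7b}, descend the resulting involution-invariant radical functions through (\ref{eq:qua47}) to $E_4$, compute their fractional divisors, and identify them with the stated right-hand sides by matching divisors and the normalization $1$ at $(p,w)=(0,0)$ --- exactly the paper's argument, with your appeal to Lemma \ref{th:locexps} being harmless but not needed since the transformation already pins down each solution. One small correction to your bookkeeping: the points $(0,0)$, $(1,\pm4)$ and $S_1,S_2,S_3$ form the fibre $\Phi_4=0$, while $T_1,\dots,T_6$ (defined by (\ref{eq:teq}), i.e.\ $G_5=0$) form the fibre $\Phi_4=\infty$, and the ramified fibre $\Phi_4=1$ consists of twelve unnamed double points; this slip does not affect the argument.
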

\begin{proof}
Preliminary expressions of the $\hpgo32$-functions 
in terms of radical functions on $E_4$ can be obtained
by applying quadratic transformation (\ref{eq:qua47}) to the functions in Theorem \ref{th:de7b}.
Their fractional divisors, respectively:
\begin{align}
& \textstyle \frac57(1,4)+\frac17(1,-4)-\frac17(T_1+\ldots+T_6),\\
& \textstyle \frac47(1,-4)-\frac17(1,4)-S_1-S_2-S_3+\frac37(T_1+\ldots+T_6),\\
& \textstyle -\frac47(1,4)-\frac57(1,-4)-5S_1-5S_2-5S_3+\frac{19}7(T_1+\ldots+T_6).
\end{align}
The radical expressions on the right-hand sides of (\ref{eq:de4b1})--(\ref{eq:de4b3}) 
have the same divisors, and evaluate to $1$ at $(p,w)=(0,0)$.
\end{proof}

\begin{table}[t!]
\begin{center}
\begin{tabular}{|c|c|c|}
\hline Id & function & divisor \\ 
\hline
--- & $p$ & $2 (0,0)-2{\cal O}$ \\[2pt]
--- & $1-p$ & $(1,4)+(1,-4)-2{\cal O}$ \\[2pt]
--- & $w-4p$ & $(0,0)+(1,4)+(-\frac17,-\frac47)-3{\cal O}$ \\[2pt]
--- & $w+5p-p^2$ & $(0,0)+3(1,-4)-4{\cal O}$ \\[2pt]
--- & $1-w+3p$ & $2(1,4)+(-\frac17,\frac47)-3{\cal O}$ \\[2pt]
--- & $1+w+3p$ & $2(1,-4)+(-\frac17,-\frac47)-3{\cal O}$ \\[2pt]
--- & $1+7w+35p$ & $3(-\frac17,\frac47)-3{\cal O}$ \\[2pt]
--- & $1-7w+35p$ & $3(-\frac17,-\frac47)-3{\cal O}$ \\[2pt]
$F_5$ & $1-2w-16p+7p^2$ & $(1,-4)+S_1+S_2+S_3-4{\cal O}$ \\[2pt]
$F_6$ & $1-10w+47p+2wp-17p^2+p^3$ & $6(1,4)-6{\cal O}$ \\[2pt]
$\widetilde{F}_6$ & $1+10w+47p-2wp-17p^2+p^3$ & $6(1,-4)-6{\cal O}$ \\[2pt]
$G_5$ & \multicolumn{1}{l|}{$1+47w+89p-14wp$} & \\
&  \multicolumn{1}{r|}{$+91p^2-49wp^2-245p^3$} & $(1,-4)+T_1+\ldots+T_6-7{\cal O}$ \\
\hline
\end{tabular}
\end{center}
\caption{Divisors on the curve $w^2=p(1+22p-7p^2)$}
\label{t:phys2}
\end{table}

The companion hypergeometric solutions at $\Phi_4=\infty$ are
\begin{align}
& \Phi_4^{\,1/28} \; \hpg32{\!-\frac1{28},\,\frac3{28},\,\frac{19}{28}}{\frac12,\;\frac34}{\frac1{\Phi_4}}\!, 
 \qquad
\Phi_4^{-3/14} \; \hpg32{\frac3{14},\,\frac5{14},\,\frac{13}{14}}{\frac34,\;\frac54}{\frac1{\Phi_4}}\!,
 \quad\notag \\
& \Phi_4^{-13/28}\; \hpg32{\frac{13}{28},\,\frac{17}{28},\,\frac{33}{28}}
{\frac{5}4,\;\frac{3}2}{\frac1{\Phi_4}}\!.
\end{align}
Their Darboux evaluations would hold around one of the points $T_1,\ldots,T_6$,
defined by (\ref{eq:teq}) and $G_5=0$.
All those points are defined over $\QQ(\sin\frac{\pi}7)$.
Handy Darboux evaluations of degree 21 for type (4B) are given in \cite[\S 4.4]{Vid18b}.

Darboux evaluations of type (4A) always have zeroes at some regular points, apparently.
Determining those zeroes 
can be tricky.
\begin{theorem}
With reference to Table $\ref{t:phys2}$, we have
\begin{align} \label{eq:de4a1}
\hpg32{\!-\frac3{28},\,\frac{11}{28},\,\frac9{14}}{\frac47,\;\frac67}{\Phi_4}
= & \, \frac{(1-7w-21p)\,(1-p)^{6/7}\,F_6^{\,3/14}}{(1-w+3p)\,G_5^{\,3/7}},\\
\hpg32{\frac1{28},\,\frac{15}{28},\,\frac{11}{14}}{\frac57,\;\frac87}{\Phi_4}
= & \, \frac{\big(1-\frac73p\big)\,(1-p)^{2/7}\;\widetilde{F}_6^{\,1/14}\,G_5^{\,1/7}}
{F_5},\\ \label{eq:de4a3}
\hpg32{\frac{9}{28},\,\frac{23}{28},\,\frac{15}{14}}{\frac{9}7,\;\frac{10}7}{\Phi_4}
= & \, \frac{(1+7w-21p)\,\sqrt{p}\;(1-p)^{4/7}\,\widetilde{F}_6^{\,1/7}\,G_5^{\,9/7}}
{(q-4p)\,F_5^{\,3}}.
\end{align}
\end{theorem}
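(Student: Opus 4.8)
The plan is to follow the template established for the type (4B) evaluations, exploiting the $2$-isogeny between $E_4$ and $E_7$ from \S\ref{sec:c247}. First I would produce preliminary algebraic expressions on $E_4$ for the three $\hpgo32$-functions. One way is to pull the relevant type (7A) Darboux evaluations of Theorem \ref{th:de7a} (and their companion solutions at $\Phi_7=\infty$) through the quadratic relation (\ref{eq:qua47}), which is a specialisation of (\ref{eq:t32a}); this expresses certain $\hpgo32$-functions of argument $\Phi_4$ in terms of radical functions on $E_4$. A second way, closer to the proofs of Theorems \ref{eq:th237a} and \ref{eq:th237b}, is to pull the type (4A) hypergeometric equation (\ref{eq:hpgde}) — with the local solutions at $z=0$ whose $\hpgo32$-parts appear in (\ref{eq:de4a1})--(\ref{eq:de4a3}) — back along $z=\Phi_4$; by Corollary \ref{th:darb3s} the resulting Fuchsian equation on $E_4$ has cyclic monodromy $\ZZ/7\ZZ$, hence a basis of radical solutions. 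Either way, to reach the exact representatives in the statement one uses the contiguous relations of \S\ref{sec:hgp}, e.g.\ one of the shape (\ref{eq:contig11}).

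Next I would determine the fractional divisors on $E_4$ of the three functions. From the divisor of $\Phi_4$ recalled in \S\ref{sec:c247} — zeros of orders $1,1,1,7,7,7$ at $(0,0),(1,4),(1,-4),S_1,S_2,S_3$ and poles of order $4$ at $T_1,\dots,T_6$ — together with the local exponents $0,\tfrac17,\tfrac37$ over the fibre $\Phi_4=0$ and the corresponding exponents over $\Phi_4=\infty$ and $\Phi_4=1$ read off from the pulled-back $P$-symbol, the divisor of each radical solution is pinned down except for its behaviour at regular points. Here, in contrast with the type (4B) case, the degree-$0$ $\QQ$-divisor assembled from the singular fibres alone is not a $\QQ$-multiple of a principal divisor, because of the group law on the genus-$1$ curve (\ref{eq:darbc4}); one is therefore forced to introduce compensating zeros at regular points of the pulled-back equation. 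These turn out to be the roots of the linear factors $1-7w-21p$, $1-\frac73p$ and $1+7w-21p$ occurring in (\ref{eq:de4a1})--(\ref{eq:de4a3}) — equivalently, after substituting $w^2=p(1+22p-7p^2)$, the roots of explicit polynomials in $p$.

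With the correct fractional divisor of each $\hpgo32$-function in hand, I would then build a radical function on $E_4$ having that divisor out of the entries of Table \ref{t:phys2}: $G_5$ absorbs the poles at the $T_i$, $F_5$ the zeros at the $S_i$, $F_6$ or $\widetilde F_6$ the contributions at $(1,\pm4)$, and $p$, $1-p$ together with the relevant linear factor the remaining part, each such radical function being normalised to take the value $1$ at $(p,w)=(0,0)$. Since a radical function on a curve is determined by its divisor up to a nonzero constant, the claimed identities follow once both sides are checked to agree at $(0,0)$. I would close the argument by matching a few leading coefficients of the power series in $\sqrt p$ on the two sides, as in \S\ref{sec:c277}.

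The step I expect to be the main obstacle is exactly the one flagged just before the statement: locating the regular-point zeros of the type (4A) solutions. Equivalently it is the genus-$1$ divisor bookkeeping — since not every degree-$0$ $\QQ$-divisor on $E_4$ is (a $\QQ$-multiple of) a principal one, the intermediate radical factors must be chosen with extraneous zeros or poles and then corrected using the algebraic relations among the Table \ref{t:phys2} functions, exactly the combinatorial routine carried out in \cite[\S 4]{ViDarb}. Once the right fractional divisors are secured, the remaining matching and the series verification are routine.
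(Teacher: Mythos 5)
Your proposal is correct and follows essentially the same route as the paper: its proof likewise obtains preliminary radical expressions by applying the quadratic transformation (\ref{eq:qua47}) together with contiguous relations to the type (7A) evaluations of Theorem \ref{th:de7a}, computes the resulting fractional divisors on $E_4$, and concludes by checking that the right-hand sides have the same divisors and value $1$ at $(p,w)=(0,0)$. The only point you leave implicit is how one could pin down the unknown regular-point zeros without that contiguity route; the paper's alternative argument does this by intersecting lines through the undetermined points with the $\QQ$-rational torsion of $E_4$ (shown via Mazur's theorem to be $\ZZ/6\ZZ$), but this is not needed for your main argument.
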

\begin{proof}
We can apply quadratic transformation (\ref{eq:qua47}) to these hypergeometric functions
and obtain $\hpgo32$-functions that are contiguous to the formulas of Theorem \ref{th:de7a}.
This allows us to derive preliminary expressions of (\ref{eq:de4a1})--(\ref{eq:de4a3}) 
as radical functions on $E_4$,
and their fractional divisors (with known points $R_1,R_2$, etc):
\begin{align} \label{eq:de4da1}
& \textstyle \frac37(1,-4)+\frac17(1,4)+R_1+R_2-\frac37(T_1+\ldots+T_6),\\
\label{eq:de4da2}
& \textstyle \frac27(1,4)-\frac17(1,-4)+\widetilde{R}_1+\widetilde{R}_2
-S_1-S_2-S_3+\frac17(T_1+\ldots+T_6),\\
\label{eq:de4da3}
& \textstyle\! -\frac27(1,-4)-\frac37(1,4)+\widehat{R}_1+\widehat{R}_2
-3S_1-3S_2-3S_3+\frac{9}7(T_1+\ldots+T_6).
\end{align}
The radical expressions on the right-hand sides of (\ref{eq:de4a1})--(\ref{eq:de4a3}) 
have the same divisors, and evaluate to $1$ at $(p,w)=(0,0)$.

Alternatively, we can follow the strategy in the proofs of Theorems \ref{eq:th237a} and \ref{th:de7b}.
For the $\hpgo32$-function in (\ref{eq:de4a1}), the points $R_1,R_2$ in the divisor (\ref{eq:de4da1})
are unknown {\em a priori}, and besides, there is other possible divisor shape 
with the $\QQ$-coefficients to $(1,-4)$ and $(1,4)$ interchanged. 
In both cases, the line in $\CC\PP^2$ through $R_1,R_2$ intersects $E_4$ at the third point
that must be a torsion point defined over $\QQ$.  
We see the point $(p,w)=(1,4)$ of order 6, and there is only one 2-torsion point $(0,0)$. 
By Mazur's theorem \cite[Theorem 7.5]{silverman}, 
the torsion subgroup is then either $\ZZ/6\ZZ$ or $\ZZ/12\ZZ$.
Existence of rational 4-torsion points distinguishes these two cases. The tangent lines 
to $E_4$ at  the 4-torsion points have the form $y=\alpha x$. The  tangent lines of this form
satisfy $\alpha^4-44\alpha^2+512=0$. Hence there are no rational $4$-torsion points,
and the $\QQ$-rational torsion is $\ZZ/6\ZZ$.
The line through undetermined $R_1,R_2$ is a line through one of the 6 torsion points.
Among the possibilities (adjusted by a compensatory factor vanishing 
only at the supposed torsion point), we find the radical function expressed in ({eq:de4a1}).
For the other two hypergeometric functions, the divisors have the shapes
(\ref{eq:de4da2}) and (\ref{eq:de4da3}) by Lemma (\ref{th:locexps}).
The points $\widetilde{R}_1,\widetilde{R}_2$ or $\widehat{R}_1,\widehat{R}_2$
are determined by trying out lines through the 6 torsion points. 
\end{proof}

\small 

\bibliographystyle{alpha}

\end{document}